\documentclass[a4paper,reqno]{amsart}

\usepackage{amsmath}
\usepackage{amsfonts}
\usepackage{amssymb}
\usepackage{amsthm}

\numberwithin{equation}{section}

\theoremstyle{plain}
\newtheorem{theorem}{Theorem}[section]

\newtheorem{lemma}[theorem]{Lemma}
\newtheorem{proposition}[theorem]{Proposition}
\newtheorem{corollary}[theorem]{Corollary}
\theoremstyle{definition}
\newtheorem{definition}[theorem]{Definition}
\newtheorem{assumption}[theorem]{Assumption}
\newtheorem{example}[theorem]{Example}
\theoremstyle{remark}
\newtheorem{remark}[theorem]{Remark}
\newcounter{counter_a}
\newenvironment{myenum}{\begin{list}{{\rm(\roman{counter_a})}}%
{\usecounter{counter_a}
\setlength{\itemsep}{0.5ex}\setlength{\topsep}{0.7ex}
\setlength{\leftmargin}{5ex}\setlength{\labelwidth}{5ex}}}{\end{list}}

\DeclareMathOperator\Real{Re}
\DeclareMathOperator\Imag{Im}
\renewcommand\Re{\Real}
\renewcommand\Im{\Imag}

\newcommand\cH{\mathcal H}
\newcommand\cK{\mathcal K}

\newcommand\cL{\mathcal L}
\newcommand\cM{\mathcal M}

\newcommand\cO{\mathcal O}

\newcommand\rmo{{\rm o}}
\newcommand\BB{\mathbb B}
\newcommand\CC{\mathbb C}

\newcommand\NN{\mathbb N}
\newcommand\RR{\mathbb R}
\newcommand\UU{\mathbb U}

\newcommand\fra{\mathfrak a}
\newcommand\frd{\mathfrak d}
\newcommand\frs{\mathfrak s}
\newcommand\frt{\mathfrak t}

\newcommand\eps{\varepsilon}

\newcommand\ov{\overline}

\newcommand{\defeq}{\mathrel{\mathop:}=}
\newcommand{\eqdef}{=\mathrel{\mathop:}}
\newcommand{\defequ}{\mathrel{\mathop:}\hspace*{-0.72ex}&=}

\newcommand\sigmaess{\sigma_{\rm ess}}
\newcommand\sigmap{\sigma_{\rm p}}
\newcommand\sigmaapp{\sigma_{\rm app}}
\newcommand\sigmadis{\sigma_{\rm dis}}

\newcommand{\dist}{\mathrm{dist}}
\newcommand\lae{\lambda_{\rm e}}
\newcommand\VM{{\rm\textbf{(VM{\boldmath$^-$}\!)}}}

\newcommand\condA{{\rm\textbf{(A)}}}
\newcommand\BBnr{\BB_{\rm nr}}

\DeclareMathOperator\dom{dom}
\DeclareMathOperator\ran{ran}

\DeclareMathOperator\defect{def}
\DeclareMathOperator\nul{nul}
\DeclareMathOperator\ind{ind}
\DeclareMathOperator\essran{ess\,ran}
\DeclareMathOperator\essinf{ess\,inf}
\DeclareMathOperator\esssup{ess\,sup}

\newcommand\void[1]{}

\begin{document}

\title[Spectral properties of unbounded $J$-self-adjoint matrices]{Spectral properties of
unbounded \\[0.5ex]
{\boldmath$J$}-self-adjoint block operator matrices}

\author{Matthias Langer}

\address{Department of Mathematics and Statistics,
University of Strathclyde \newline
26 Richmond Street, Glasgow G1 1XH, United Kingdom \newline
\texttt{m.langer@strath.ac.uk} \newline
(corresponding author)}

\author{Michael Strauss}

\address{Department of Mathematics, University of Sussex \newline
Falmer Campus, Brighton BN1 9QH, United Kingdom \newline
\texttt{m.strauss@sussex.ac.uk}}

\begin{abstract}
We study the spectrum of unbounded $J$-self-adjoint block operator matrices.
In particular, we prove enclosures for the spectrum, provide a sufficient
condition for the spectrum being real and derive variational principles
for certain real eigenvalues even in the presence of non-real spectrum.
The latter lead to lower and upper bounds and asymptotic estimates
for eigenvalues.
\\[1ex]
\textsc{AMS Subject classification 2010:} 47B50, 47A10; 47A56, 46C20, 49R05.
\\[1ex]
\textsc{Keywords:} $J$-self-adjoint operator, spectral enclosure,
Schur complement, quadratic numerical range, Krein space,
spectrum of positive type.
\end{abstract}


\maketitle

\section{Introduction}

\noindent
Let $\cH_1$ and $\cH_2$ be Hilbert spaces and consider a block operator matrix
acting in the direct sum $\cH\defeq\cH_1\oplus\cH_2$, i.e.\ an operator of the form
\[
  \cM_0 = \begin{pmatrix} A & B \\ C & D \end{pmatrix},
\]
where, e.g.\ $A$ is an operator in $\cH_1$ and $B$ an operator from $\cH_2$ to $\cH_1$.
Such operators play an important role in many spectral problems and their
applications; see, e.g.\ the monograph \cite{tretbook} and the references cited therein.
In recent years, many papers have studied and described spectral properties of such
block operator matrices in terms of their entries $A$, $B$, $C$ and $D$.
In particular, spectral enclosures and variational principles for
characterising eigenvalues, often in a gap in the essential spectrum,
have received a great deal of attention;
see, e.g.\ \cite{AL95,AMS09,AMT10,ALMS94,DES00,GLS99,JT02,KLT04,LLMT05,LLT02,
LMMT01,LMM90,LT98,MS96,T09}.
In many of these papers the case was studied
when $A$ and $D$ are self-adjoint and $C=B^*$, in which case $\cM_0$ is a
symmetric operator in $\cH$, and often even essentially self-adjoint.

In the present paper we consider the situation when $A$ and $D$ are self-adjoint
and $C=-B^*$.  In this case the operator $\cM_0$ is $J$-symmetric where
$J=\bigl(\begin{smallmatrix} I & 0 \\ 0 & -I \end{smallmatrix}\bigr)$;
this means that $J\cM_0$ is a symmetric operator in $\cH$, or in other words,
the operator $\cM_0$ is symmetric in the Krein space $\cK\defeq\cH_1\oplus\cH_2$
with indefinite inner product
$[x,y]\defeq\langle Jx,y\rangle$, where $\langle \cdot\,,\cdot\rangle$
denotes the inner product in the Hilbert space $\cH$.
Every bounded self-adjoint operator in a Krein space can be written
as a block operator matrix with $A$, $D$ self-adjoint and $C=-B^*$.
However, this is not true in general for unbounded operators.
Moreover, for given self-adjoint $A$, $D$ and $C=-B^*$ it is not guaranteed
that $\cM_0$ has a closure that is self-adjoint in the Krein space.
Even if the latter is true, it is not clear whether this closure
has non-empty resolvent set.

We consider two classes of unbounded block operator matrices:
certain upper dominant matrices (where the operators in the top row,
i.e.\ $A$ and $B$ are stronger than those in the bottom row
in the sense that the latter are relatively bounded with respect to the former)
and certain diagonally dominant matrices (where the stronger operators
are the diagonal operators $A$ and $D$).
In these situations the operator $\cM_0$ is closable, its closure $\cM$
is $J$-self-adjoint, i.e.\ self-adjoint in the Krein space,
and it has non-empty resolvent set.
Certain diagonally dominant $J$-self-adjoint block operator matrices,
often with bounded $B$ or some other extra assumptions, have been investigated,
e.g.\ in \cite{AL95,AMS09,AMT10,JT02,LLMT05,LLMT08,LLT02,T09}.
However, to our knowledge, upper dominant $J$-self-adjoint block operator matrices
have been studied only in few papers; see \cite{JT02,MS96}.

Since in both cases that we consider (upper and diagonally dominant case)
the operator $A$ is stronger in some sense than $C=-B^*$, there exist $a\in\RR$
and $b\ge0$ such that $\|B^*x\|^2 \le a\|x\|^2+b\langle Ax,x\rangle$ for all
$x\in\dom(A)$.
Using these constants $a,b$ and the location of the spectra of $A$ and $D$
we prove enclosures for the spectrum of $\cM$.
In particular, the non-real spectrum is always contained in a compact set
and hence the resolvent set is non-empty; see Theorem~\ref{th:specinB}.
We also give a sufficient condition for the spectrum of $\cM$ being real,
namely condition \condA{} introduced in Definition~\ref{def:Betc}.
In the latter situation we can give an enclosure that consists
of one interval (in a limiting case) or of two disjoint intervals (in the generic case).
The main tool for proving these enclosures is the
quadratic numerical range $W^2(\cM)\subset\CC$, which was introduced in \cite{LT98}
and whose closure contains the spectrum in many situations;
see Definition~\ref{def:W2} and Proposition~\ref{pr:spec_incl}.

The second set of results concerns the characterisation of certain eigenvalues
with variational principles.  Instead of the classical Rayleigh quotient
we use either a functional that is connected with the quadratic numerical range
(see Theorem~\ref{finitevar2}) or a generalised Rayleigh functional that is
associated with the Schur complement of the block operator matrix
(see Theorem~\ref{finitevar}); the Schur complement is formally given by
\[
  S(z) = A-z+B(D-z)^{-1}B^*
\]
and is an operator function acting only in the first component $\cH_1$.
With the help of these variational principles we also prove enclosures
for eigenvalues of $\cM$ as well as asymptotic enclosures under the extra
assumption that $A$ has compact resolvent.

Further, we prove some results concerning the properties of $\cM$ considered
as an operator in a Krein space.  In particular, we prove that spectral
points in a certain interval are of positive type, and therefore there exists
a local spectral function for the operator~$\cM$.
If $A$ has compact resolvent, then $\cM$ is definitisable.
Finally, we discuss some examples with differential operators as entries
to illustrate our results.

Let us give a brief synopsis of the paper.
In Section~\ref{jsa_sec} we define the operator $\cM$, which is the
closure of the block operator matrix $\cM_0$, describe its domain and action
and show that it is $J$-self-adjoint.
The Schur complement $S$ of $\cM$ is introduced and studied in Section~\ref{schur}.
In particular, in Theorem~\ref{specequiv} we show that the spectra of $\cM$ and $S$
coincide on the set where $S$ is defined.
In Section~\ref{numrange_sec} the quadratic numerical range $W^2(\cM)$ of $\cM$
is introduced and used to show that the spectrum of $\cM$ is contained
in the set $\BB$ that is defined in Definition~\ref{def:Betc};
see Theorem~\ref{th:specinB}.
A number $\mu\in\RR$ also plays an important role in the definition of $\BB$
(real parts of non-real points in $\BB$ are bounded from above by $\mu$)
and in later sections.
Section~\ref{varprince} is devoted to the characterisation of eigenvalues
in $(\mu,\infty)$ via variational principles: Theorems~\ref{finitevar}
and \ref{finitevar2} use functionals that are connected with the Schur complement
and the quadratic numerical range, respectively.
These characterisations are used in Section~\ref{estimates} to obtain
enclosures for eigenvalues in the interval $(\mu,\infty)$.
In Section~\ref{krein} we prove that spectral points in $(\mu,\infty)$
are of positive type, we show that if a strict version of condition \condA{}
is satisfied, then $\cM-\gamma$ is non-negative in the Krein space for certain $\gamma$,
and we prove that $\cM$ is definitisable if $A$ has compact resolvent.
Finally, in Section~\ref{exs} we apply our results to some examples where the
entries of the block operator matrix are differential and multiplication operators.

\textbf{Notation.}
For a linear operator $T$ we denote its spectrum by $\sigma(T)$ and
its resolvent set by $\rho(T)$.
In addition, we define the \emph{essential spectrum}, \emph{point spectrum},
\emph{discrete spectrum}, \emph{approximate point spectrum} and the
\emph{numerical range} as follows:
\begin{align*}
  \sigmaess(T) &\defeq \big\{z\in\mathbb{C}: T-z\text{ is not Fredholm}\big\}, \\[1ex]
  \sigmap(T) &\defeq \big\{z\in\mathbb{C}: \ker(T-z)\ne\{0\}\big\}, \displaybreak[0]\\[1ex]
  \sigmadis(T) &\defeq \big\{z\in\sigmap(T): T-z \text{ is Fredholm and $z$ is isolated in }\sigma(T)\big\}, \displaybreak[0]\\[1ex]
  \sigmaapp(T) &\defeq \bigl\{z\in\CC: \exists\,x_n\in\dom(T),\,\|x_n\|=1,\,(T-z)x_n\to0\bigr\}, \\[1ex]
  W(T) &\defeq \bigl\{\langle Tx,x\rangle: x\in\dom(T),\,\|x\|=1\bigr\}.
\end{align*}
The square root of a real number is defined such that $\sqrt{t}\ge0$ for $t\in[0,\infty)$
and $\Im\sqrt{t}>0$ for $t\in(-\infty,0)$.
Moreover, we use the notation $(t)_+ \defeq \max\{t,0\}$ for $t\in\RR$.

\section{$J$-self-adjoint operator matrices}\label{jsa_sec}

\noindent
Throughout this paper let $\mathcal{H}_1$ and $\mathcal{H}_2$ be Hilbert spaces
with inner products $\langle\cdot\,,\cdot\rangle$;
we also denote the inner product in $\cH\defeq\cH_1\oplus\cH_2$ by $\langle\cdot\,,\cdot\rangle$.
Moreover, let $A$ be a self-adjoint operator acting in $\mathcal{H}_1$ which is bounded from below;
let $B$ be a densely defined and closable operator acting from $\mathcal{H}_2$ to $\mathcal{H}_1$;
and let $D$ be a self-adjoint operator acting in $\mathcal{H}_2$ which is bounded from above.
Let $\fra$ and $\frd$ be the closed quadratic forms associated with the operators $A$ and $D$,
respectively, and set
\begin{equation}\label{defalmidepl}
  \alpha_- \defeq \min\sigma(A), \qquad \delta_+ \defeq \max\sigma(D).
\end{equation}

We shall be concerned with the spectral properties of (the closure of) the
block operator matrix
\begin{equation}\label{M0}
  \cM_0 \defeq \begin{pmatrix} A & B \\ -B^* & D \end{pmatrix}:
  \mathcal{H}_1\oplus\mathcal{H}_2\to\mathcal{H}_1\oplus\mathcal{H}_2
\end{equation}
with $\dom(\cM_0) = \bigl(\dom(A)\cap\dom(B^*)\bigr)\times\bigl(\dom(B)\cap\dom(D)\bigr)$.
We consider two classes of matrices, which are introduced in the following assumption.

\begin{assumption}\label{assump}
Let $A$, $B$, $D$ and $\cM_0$ be as above.
We assume that one of the following conditions is satisfied:
\begin{enumerate}
\setlength{\itemsep}{2ex}
\item[(I)]
$\dom(\fra)\subset\dom(B^*), \qquad \dom(B)\subset\dom(D)$,
\qquad $\dom(B)$ is a core for $D$;
\item[(II)]
$\dom(\fra)\subset\dom(B^*), \qquad \dom(\frd)\subset\dom(B)$.
\end{enumerate}
\end{assumption}

\medskip

\noindent
Under Assumption~\ref{assump}.(I) the block operator matrix $\cM_0$
is upper dominant in the sense that the operators in the second row
are relatively bounded with respect to the operators in the first row;
see \cite[Definition~2.2.1]{tretbook}.
If Assumption~\ref{assump}.(II) is satisfied, then $\cM$
is diagonally dominant.
As we shall see below, $\cM_0$ is closed in case (II) and
closable in case (I).  In both cases, we denote the closure of $\cM_0$ by $\cM$.

The condition $\dom(\fra)\subset\dom(B^*)$
(which is satisfied in both cases (I) and (II))
ensures the existence of constants $a\in\RR$ and $b\ge0$ such that
\begin{equation}\label{domcons}
  \|B^*x\|^2\le a\|x\|^2 + b\fra[x] \qquad\text{for all}\;\; x\in\dom(\fra).
\end{equation}
Clearly, one can choose $a$ and $b$ such that both are non-negative, but we allow $a$
to be negative to have more flexibility in our estimates.
Moreover, let $b_0$ be the relative bound, i.e.\
\begin{equation}\label{defb0}
  b_0 \defeq \inf\bigl\{b\ge0:\; \text{there exists an $a\in\RR$
  such that \eqref{domcons} holds}\bigr\}.
\end{equation}
However, for many theorems, in particular, in later sections, we fix one
pair $a,b$ such that \eqref{domcons} holds.

\begin{remark}\label{re:abalmi}
Relation \eqref{domcons} implies that, for $x\in\dom(\fra)\backslash\{0\}$,
\[
  0 \le \frac{\|B^*x\|^2}{\|x\|^2} \le a + b\frac{\fra[x]}{\|x\|^2}\,.
\]
Taking the infimum of the right-hand side over all $x\in\dom(\fra)\backslash\{0\}$
we obtain
\begin{equation}\label{abalmi}
  a+b\alpha_- \ge 0.
\end{equation}
\end{remark}

In the following we shall often use the boundedness of certain operators.
Let $\nu<\min\sigma(A)$.  The condition $\dom(\fra)\subset\dom(B^*)$ and the
closed graph theorem imply that $B^*(A-\nu)^{-\frac{1}{2}}$ is bounded and
everywhere defined. Hence $(A-\nu)^{-\frac{1}{2}}B$
is bounded and densely defined and
\begin{align}
  & \bigl((A-\nu)^{-\frac{1}{2}}B\bigr)^*=B^*(A-\nu)^{-\frac{1}{2}},
  \label{rel_ABstar1} \\[1ex]
  & \ov{(A-\nu)^{-1}B} = (A-\nu)^{-\frac{1}{2}}\ov{(A-\nu)^{-\frac{1}{2}}B}
  \label{rel_ABstar2}
\end{align}
hold.

\begin{remark}\label{re:comp1}
If the operator $B^*(A-\nu)^{-\frac12}$ is compact for some $\nu<\min\sigma(A)$,
then $B^*$ is $(A-\nu)^{\frac12}$-bounded with relative bound $0$;
see, e.g.\ \cite[Corollary~III.7.7]{EdmundsEvans}.
This implies that \eqref{domcons} holds for arbitrary $b>0$
(see, e.g.\ \cite[\S V.4.1]{katopert})
and hence $b_0=0$ in this case.
\end{remark}

In the next theorem we explicitly describe the domain and the action of the
closure $\cM$ of $\cM_0$.  In the proof we reduce the problem to a situation
with a self-adjoint operator in a Hilbert space.  To this end, define the matrix
\begin{equation}\label{defJ}
  J \defeq \begin{pmatrix} I & 0 \\ 0 & -I \end{pmatrix}:
  \cH_1\oplus\cH_2\to\cH_1\oplus\cH_2.
\end{equation}

\begin{theorem}\label{jsaness}
If Assumption~{\rm\ref{assump}.(I)} is satisfied, then $J\cM_0$ is essentially self-adjoint
and $\cM_0$ is closable with closure $\cM$.

If Assumption~{\rm\ref{assump}.(II)} is satisfied, then $J\cM_0$ is self-adjoint
and $\cM_0$ is closed with domain $\dom(A)\times\dom(D)$ and hence equal to $\cM$.

Let $\nu<\min\sigma(A)$ be arbitrary.
In both cases {\rm(I)} and {\rm(II)} we have
\begin{align}
\label{top1}
  \dom(\cM) &= \bigg\{\binom{x}{y}: y\in\dom(D),\;x + \overline{(A-\nu)^{-1}B}y\in\dom(A)\bigg\},
  \\[1ex]
  \label{top2}
  \cM\binom{x}{y} &= \begin{pmatrix}
    (A-\nu)\bigl(x+\overline{(A-\nu)^{-1}B}y\bigr) + \nu x \\[1ex]
    -B^*x + Dy
  \end{pmatrix},
  \qquad \binom{x}{y}\in\dom(\cM).
\end{align}
If\, $(x,y)^T\in\dom(\cM)$, then $x\in\dom(\fra)$.
Moreover, for $(x,y)^T\in\dom(\cM)$
and $(\hat x,\hat y)^T\in\dom(\fra)\times\cH_2$ we have
\begin{equation}\label{connection_T_form}
  \biggl\langle \cM\binom{x}{y},\binom{\hat x}{\hat y}\biggr\rangle
  = \fra[x,\hat x] + \langle y,B^*\hat x\rangle
  - \langle B^*x,\hat y\rangle + \langle Dy,\hat y\rangle.
\end{equation}
\end{theorem}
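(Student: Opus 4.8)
The plan is to pass to the matrix $J$ and study the symmetric operator $J\cM_0=\bigl(\begin{smallmatrix} A & B\\ B^{*} & -D\end{smallmatrix}\bigr)$, whose diagonal $A\oplus(-D)$ is self-adjoint and bounded from below; since $J$ is bounded and boundedly invertible, $\cM=\ov{\cM_0}=J\,\ov{J\cM_0}$, so all assertions reduce to describing $\ov{J\cM_0}$. Throughout I fix some $\nu<\min\sigma(A)$ and use the bounded operators $(A-\nu)^{-1/2}$ and $\bar G\defeq\ov{(A-\nu)^{-1/2}B}$, which by \eqref{rel_ABstar1} satisfies $\bar G^{*}=B^{*}(A-\nu)^{-1/2}$.

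For case~{\rm(II)} I would argue directly. From $\dom(\fra)\subset\dom(B^{*})$ and \eqref{domcons}, inserting $\fra[x]=\langle Ax,x\rangle\le\tfrac{\eps}{2}\|Ax\|^{2}+\tfrac{1}{2\eps}\|x\|^{2}$ and letting $\eps\to0$ shows that $B^{*}$ is $A$-bounded with relative bound $0$; symmetrically, $\dom(\frd)\subset\dom(B)$ makes $B$ $D$-bounded with bound $0$. Hence $\bigl(\begin{smallmatrix} 0 & B\\ B^{*} & 0\end{smallmatrix}\bigr)$ is relatively bounded with respect to $A\oplus(-D)$ with relative bound $0$, so the Kato--Rellich theorem gives that $J\cM_0$ is self-adjoint on $\dom(A)\times\dom(D)$; this set is exactly $\dom(\cM_0)$ here, so $\cM_0$ is closed and equals $\cM$, and one checks that \eqref{top1}, \eqref{top2} reduce to $\dom(A)\times\dom(D)$ and $\binom xy\mapsto\binom{Ax+By}{-B^{*}x+Dy}$.

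For case~{\rm(I)} the off-diagonal entries are controlled only through $A$, so I would ``straighten out'' the first component by conjugating with $\mathbf W\defeq(A-\nu)^{1/2}\oplus I$, a positive self-adjoint operator with bounded inverse. A direct computation using \eqref{rel_ABstar1} and \eqref{rel_ABstar2} gives the operator identity $J\cM_0=\bigl(\begin{smallmatrix}\nu&0\\0&0\end{smallmatrix}\bigr)+\mathbf W\,\mathbf R_0\,\mathbf W$ on $\dom(\cM_0)=\dom(A)\times\dom(B)$, where $\mathbf R_0$ is the restriction to $\dom(\fra)\times\dom(B)$ of $\mathbf R\defeq\bigl(\begin{smallmatrix} I & \bar G\\ \bar G^{*} & -D\end{smallmatrix}\bigr)$. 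The operator $\mathbf R$ on $\cH_1\times\dom(D)$ is self-adjoint (its top row is bounded self-adjoint) and bounded from below since $-D\ge-\delta_{+}$, and $\dom(\fra)\times\dom(B)$ is a core for it — here I use that $\dom(\fra)$ is dense, that the top row is bounded, and, crucially, the hypothesis that $\dom(B)$ is a core for $D$ — so $\mathbf R_0$ is essentially self-adjoint with closure $\mathbf R$. The main obstacle is then to transfer essential self-adjointness through conjugation by the \emph{unbounded} operator $\mathbf W$, i.e.\ to show that $\ov{\mathbf W\,\mathbf R_0\,\mathbf W}$ is self-adjoint; this needs care because for unbounded $\mathbf W$ the naively formed product $\mathbf W\mathbf R\mathbf W$ may have a domain too small to be self-adjoint. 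After shifting $\mathbf R$ to be $\ge1$, I would identify $\ov{\mathbf W\,\mathbf R_0\,\mathbf W}$ with the self-adjoint operator $(\mathbf R^{1/2}\mathbf W)^{*}(\mathbf R^{1/2}\mathbf W)$ associated with the closed, semibounded form $x\mapsto\|\mathbf R^{1/2}\mathbf W x\|^{2}$ (the operator $\mathbf R^{1/2}\mathbf W$ being closed precisely because $\mathbf R^{-1/2}$ is bounded), noting that the core property from the previous step makes $\dom(\cM_0)$ a form core. Consequently $J\cM_0$ is essentially self-adjoint and $\cM_0$ is closable.

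It then remains to identify $\cM=J\,\ov{J\cM_0}$ and to establish \eqref{connection_T_form}. Computing the domain of the self-adjoint operator from the previous step and using \eqref{rel_ABstar2} to rewrite $(A-\nu)^{-1/2}\bar G=\ov{(A-\nu)^{-1}B}$, one arrives at \eqref{top1} and \eqref{top2} (a short computation with the resolvent identity shows the right-hand sides there are independent of $\nu$). If $\binom xy\in\dom(\cM)$, then writing $x=\bigl(x+\ov{(A-\nu)^{-1}B}y\bigr)-\ov{(A-\nu)^{-1}B}y$ exhibits $x$ as the difference of an element of $\dom(A)\subset\dom(\fra)$ and an element of $\ran\bigl((A-\nu)^{-1/2}\bigr)=\dom(\fra)$, so $x\in\dom(\fra)$. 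Finally, for $\binom xy\in\dom(\cM)$ and $\binom{\hat x}{\hat y}\in\dom(\fra)\times\cH_2$ I would insert \eqref{top2} and move one factor $(A-\nu)^{1/2}$ off the first component onto $\hat x$ (legitimate since $\hat x\in\dom(\fra)=\dom((A-\nu)^{1/2})$), obtaining $\fra[x,\hat x]+\langle\bar G y,(A-\nu)^{1/2}\hat x\rangle+\nu\langle x,\hat x\rangle=\fra[x,\hat x]+\langle y,B^{*}\hat x\rangle$ via $\bar G^{*}(A-\nu)^{1/2}\hat x=B^{*}\hat x$, while the second component of \eqref{top2} contributes $-\langle B^{*}x,\hat y\rangle+\langle Dy,\hat y\rangle$; adding the two yields \eqref{connection_T_form}.
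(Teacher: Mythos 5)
Your overall strategy (pass to $J\cM_0$, treat the two cases separately) is the same as the paper's, but the paper simply cites \cite[Theorems~2.2.7, 2.6.6 and 2.3.6]{tretbook} for self-adjointness resp.\ essential self-adjointness, whereas you attempt to prove these facts directly. Your Kato--Rellich argument in case~(II) is fine, and so are the final two paragraphs (deducing $x\in\dom(\fra)$ and the form identity \eqref{connection_T_form}), which essentially coincide with the paper's computation.

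The gap is in case~(I). Two things go wrong in the step where you transfer essential self-adjointness through $\mathbf W$.

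First, the shift: replacing $\mathbf R$ by $\mathbf R+c$ so that $\mathbf R+c\ge 1$ changes $\mathbf W\,\mathbf R_0\,\mathbf W$ into $\mathbf W(\mathbf R_0+c)\mathbf W = \mathbf W\,\mathbf R_0\,\mathbf W + c\,\mathbf W^2$, and $\mathbf W^2=(A-\nu)\oplus I$ is unbounded. Thus essential self-adjointness of $\mathbf W(\mathbf R_0+c)\mathbf W$ does not translate into essential self-adjointness of $\mathbf W\,\mathbf R_0\,\mathbf W$; the two operators are not related by a bounded perturbation, so the reduction is not legitimate (and there is no $c$ in general for which $\mathbf R\ge 1$ without shifting, since $-D$ can be badly negative).

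Second, and more fundamentally, even granting the shift: identifying $\overline{\mathbf W\,\mathbf R_0\,\mathbf W}$ with the form operator $(\mathbf R^{1/2}\mathbf W)^*(\mathbf R^{1/2}\mathbf W)$ is exactly the essential self-adjointness claim you are trying to prove. The fact that $\dom(\cM_0)$ is a \emph{form} core for $x\mapsto\|\mathbf R^{1/2}\mathbf W x\|^2$ tells you that $(\mathbf R^{1/2}\mathbf W)^*(\mathbf R^{1/2}\mathbf W)$ is the Friedrichs extension of $\mathbf W\,\mathbf R_0\,\mathbf W$, but a symmetric semibounded operator whose domain is a form core for its Friedrichs extension need not be essentially self-adjoint (the classical example is $-\Delta$ on $C_c^\infty(\RR^3\setminus\{0\})$, which is a form core for $-\Delta$ on $H^1(\RR^3)$ but has deficiency indices $(1,1)$). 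So "form core" does not yield "operator core", and the argument is circular at this point. To close this gap one would need something genuinely stronger — a direct computation showing $\ran\bigl(\mathbf W\mathbf R_0\mathbf W+1\bigr)$ is dense, a Frobenius--Schur/resolvent construction, or a deficiency-index count — which is essentially the content of \cite[Theorem~2.3.6]{tretbook} that the paper invokes.
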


\begin{proof}
For the self-adjointness of $J\cM_0$ in Case (II) see
\cite[Theorems~2.2.7 and 2.6.6]{tretbook}.  The other assertions in this case
are straightforward.

Now assume that Assumption~2.1.(I) is satisfied.  We have
\[
  J\cM_0 = \begin{pmatrix} A & B \\ B^* & -D \end{pmatrix}
\]
which, by \cite[Theorem~2.3.6]{tretbook}, is essentially self-adjoint with
\[
  \dom(\overline{J\cM_0}) = \bigg\{\binom{x}{y}:
  y\in\dom(D),\;x + \overline{(A-\nu)^{-1}B}y\in\dom(A)\bigg\}
\]
and
\[
  \overline{J\cM_0}\binom{x}{y}
  = \begin{pmatrix}
    (A-\nu)\bigl(x+\overline{(A-\nu)^{-1}B}y\bigr) + \nu x \\[1ex]
    B^*x - Dy
  \end{pmatrix}
\]
where $\nu<\min\sigma(A)$ is arbitrary.
Since $J$ is an involution, $\cM_0$ is closable and
$\cM=\ov{\cM_0}=\ov{JJ\cM_0}=J\ov{J\cM_0}$, which shows \eqref{top1} and \eqref{top2}.

It follows also from \cite[Theorem~2.3.6]{tretbook} that $(x,y)^T\in\dom(\cM)$
implies that $x\in\dom(\fra)$.

In order to show \eqref{connection_T_form}, let
$(x,y)^T\in\dom(\cM)$ and $(\hat x,\hat y)^T\in\dom(\fra)\times\cH_2$.
From \eqref{top2} and \eqref{rel_ABstar2} we obtain
\begin{align*}
  & \biggl\langle \cM\binom{x}{y},\binom{\hat x}{\hat y}\biggr\rangle
  \\[1ex]
  &= \Bigl\langle (A-\nu)\bigl(x+(A-\nu)^{-\frac{1}{2}}\ov{(A-\nu)^{-\frac{1}{2}}B}y\bigr)+\nu x,\hat x\Bigr\rangle
  + \bigl\langle -B^*x+Dy,\hat y\bigr\rangle
  \\[1ex]
  &= \Bigl\langle (A-\nu)^{\frac{1}{2}}x+\ov{(A-\nu)^{-\frac{1}{2}}B}y,(A-\nu)^{\frac{1}{2}}\hat x\Bigr\rangle
  + \nu\langle x,\hat x\rangle - \langle B^*x,\hat y\rangle + \langle Dy,\hat y\rangle
  \displaybreak[0]\\[1ex]
  &= (\fra-\nu)[x,\hat x]+\bigl\langle y,B^*(A-\nu)^{-\frac{1}{2}}(A-\nu)^{\frac{1}{2}}\hat x\bigr\rangle
  + \nu\langle x,\hat x\rangle - \langle B^*x,\hat y\rangle + \langle Dy,\hat y\rangle
  \\[1ex]
  &= \fra[x,\hat x] + \langle y,B^*\hat x\rangle
  - \langle B^*x,\hat y\rangle + \langle Dy,\hat y\rangle,
\end{align*}
which proves \eqref{connection_T_form}.
\end{proof}

From \eqref{connection_T_form} we can deduce the following:
if $(x,y)^T\in\dom(\cM)$ and $\cM\binom{x}{y}=\binom{u}{v}$, then
\begin{align}
  \fra[x] + \langle y,B^*x\rangle &= \langle u,x\rangle,
  \label{form1} \\[0.5ex]
  -\langle B^*x,y\rangle + \frd[y] &= \langle v,y\rangle;
  \label{form2}
\end{align}
this follows by setting $(\hat x,\hat y)^T=(x,0)^T$ and $(\hat x,\hat y)^T=(0,y)^T$,
respectively, in \eqref{connection_T_form}.

\begin{remark}
If we introduce the inner product
\begin{equation}\label{indef_pr}
  \biggl[\binom{x}{y},\binom{\hat x}{\hat y}\biggr]
  = \biggl\langle J\binom{x}{y},\binom{\hat x}{\hat y}\biggr\rangle
  = \langle x,\hat x\rangle - \langle y,\hat y\rangle,
  \qquad \binom{x}{y},\binom{\hat x}{\hat y}\in\cH_1\oplus\cH_2,
\end{equation}
with $J$ from \eqref{defJ},
then $\cH_1\oplus\cH_2$ becomes a Krein space with fundamental symmetry $J$,
and $\cM$ is self-adjoint in this Krein space.
This implies that $\sigma(\cM)$ is symmetric with respect to the real axis;
see, e.g.\ \cite[Corollary~VI.6.3]{Bognar}.
We come back to the properties of $\cM$ in the Krein space in Section~\ref{krein}.
For basic properties of Krein spaces see, e.g.\ \cite{Bognar}.
\end{remark}

We can also describe the adjoint of the operator $\cM$ in the Hilbert space $\cH$.

\begin{corollary}\label{adjcor}
The adjoint $\cM^*$ of the operator $\cM$ from Theorem~{\rm\ref{jsaness}}
is equal to the closure of the operator
\[
  \begin{pmatrix} A & -B \\ B^* & D \end{pmatrix}
\]
with domain $\bigl(\dom(A)\cap\dom(B^*)\bigr)\times\bigl(\dom(B)\cap\dom(D)\bigr)$;
the operator $\cM^*$ is given explicitly by
\begin{equation}\label{Tadj}
\begin{aligned}
  \dom(\cM^*) &= \bigg\{\binom{x}{y}: y\in\dom(D),\;x-\overline{(A-\nu)^{-1}B}y\in\dom(A)\bigg\},
  \\[1ex]
  \cM^*\binom{x}{y}
  &= \begin{pmatrix}
    (A-\nu)\bigl(x-\overline{(A-\nu)^{-1}B}y\bigr) + \nu x \\[1ex]
    B^*x + Dy
  \end{pmatrix},
  \qquad \binom{x}{y}\in\dom(\cM^*).
\end{aligned}
\end{equation}
\end{corollary}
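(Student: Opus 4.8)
The plan is to exploit that $\cM$ is $J$-self-adjoint, which reduces the computation of $\cM^*$ to a short manipulation of the explicit description of $\cM$ obtained in Theorem~\ref{jsaness}. Recall from (the proof of) Theorem~\ref{jsaness} that $\cM=\ov{\cM_0}=J\,\ov{J\cM_0}$, where $\ov{J\cM_0}$ is self-adjoint in $\cH$ (in case~(II) even $\ov{J\cM_0}=J\cM_0$), and that $J$ from \eqref{defJ} is a bounded, everywhere defined, self-adjoint operator with $J^2=I$. Using the standard identity $(ST)^*=T^*S^*$ for a bounded operator $S$ (which here yields equality, not merely inclusion) together with the self-adjointness of $\ov{J\cM_0}$, one gets
\[
  \cM^* = \bigl(J\,\ov{J\cM_0}\bigr)^* = \bigl(\ov{J\cM_0}\bigr)^*J = \ov{J\cM_0}\,J = J\,\cM\,J .
\]

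Next I read off domain and action of $\cM^*=J\,\cM\,J$ from \eqref{top1} and \eqref{top2}. Conjugating $\cM$ by $J$ replaces the input $y$ by $-y$ and changes the sign of the second output component. Hence $(x,y)^T\in\dom(\cM^*)$ if and only if $(x,-y)^T\in\dom(\cM)$, i.e.\ if and only if $y\in\dom(D)$ and $x-\ov{(A-\nu)^{-1}B}\,y\in\dom(A)$, and in that case
\[
  \cM^*\binom{x}{y} = J\,\cM\binom{x}{-y}
  = \begin{pmatrix}
    (A-\nu)\bigl(x-\ov{(A-\nu)^{-1}B}\,y\bigr)+\nu x \\[1ex]
    B^*x+Dy
  \end{pmatrix},
\]
which is precisely \eqref{Tadj}.

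It remains to identify $\cM^*$ with the closure of $\bigl(\begin{smallmatrix} A & -B \\ B^* & D\end{smallmatrix}\bigr)$ on $\bigl(\dom(A)\cap\dom(B^*)\bigr)\times\bigl(\dom(B)\cap\dom(D)\bigr)$. On this domain, which $J$ maps bijectively onto itself, one has the operator identity
\[
  \begin{pmatrix} A & -B \\ B^* & D \end{pmatrix} = J\,\cM_0\,J .
\]
Since $J$ is a linear homeomorphism of $\cH$, taking closures commutes with conjugation by $J$, so the closure of the left-hand side equals $J\,\ov{\cM_0}\,J=J\,\cM\,J=\cM^*$ by the identity established above.

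No substantial obstacle arises; the only points that require a little care are that the adjoint identity $(JT)^*=T^*J$ is applied with $J$ bounded and everywhere defined, so that equality holds, and that the formulas for $\dom(\cM)$ and $\cM\binom{x}{y}$ invoked from Theorem~\ref{jsaness} are valid in both cases of Assumption~\ref{assump}.
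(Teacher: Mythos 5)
Your proof is correct and follows essentially the same route as the paper: you use the factorisation $\cM=J\,\ov{J\cM_0}$, the self-adjointness of $\ov{J\cM_0}$ and the boundedness and involutivity of $J$ to get $\cM^*=\ov{J\cM_0}\,J$ (equivalently $J\cM J$), and then read off the domain and action from the explicit description of $\cM$. The paper's proof is a condensed version of exactly this computation; your additional verification that $J\cM_0 J$ equals the operator $\bigl(\begin{smallmatrix}A & -B\\ B^* & D\end{smallmatrix}\bigr)$ on $\dom(\cM_0)$ is a detail the paper leaves implicit.
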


\begin{proof}
We have
\[
  \overline{J\cM_0} = (\overline{J\cM_0})^*=(J\cM_0)^*=\cM^*J^*=\cM^*J
\]
and hence $\cM^* = \overline{J\cM_0}J$.  Therefore \eqref{Tadj} holds.
\end{proof}

\section{The Schur complement}\label{schur}

\noindent
In this section we define and study the (first) Schur complement $S$ of the
block operator matrix $\cM$, which is an operator function acting
in the first component $\cH_1$.  Formally, $S$ is given by
\[
  S(z) = A-z+B(D-z)^{-1}B^*, \qquad z\in\rho(D).
\]
However, the domain of $S(z)$ may be too small, and therefore we define $S(z)$
via quadratic forms and for $z$ in a (possibly) smaller set.
The main result of this section is a spectral equivalence of
the operator $\cM$ and the operator function $S$, which is explained further below.

\begin{definition}\label{def:schur}
Let $b_0$ be as in \eqref{defb0} and set
\[
  \mathbb{U} \defeq \bigl\{z\in\mathbb{C}:\dist(z,\sigma(D))>b_0\bigr\}.
\]
Moreover, define the family of sesquilinear forms
\begin{multline*}
  \frs(z)[x,y] \defeq \fra[x,y] - z\langle x,y\rangle +
  \bigl\langle(D-z)^{-1}B^*x,B^*y\bigr\rangle, \\[1ex]
  z\in\mathbb{U},\;x,y\in\dom(\frs(z))\defeq\dom(\fra).
\end{multline*}
\end{definition}

\begin{lemma}\label{forms}
Suppose that Assumption~{\rm\ref{assump}} is satisfied.
Then $\frs(\cdot)$ is a holomorphic family of type {\rm(a)}, i.e.\
$\dom(\frs(z))$ is independent of $z$, $\frs(z)$ is sectorial and closed
for every $z\in\UU$, and $\frs(\cdot)[x]$ is holomorphic on $\UU$ for
every $x\in\dom(\fra)$.
\end{lemma}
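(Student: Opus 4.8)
The plan is to verify each of the three defining properties of a holomorphic family of type (a) separately, reducing everything to the bounded operator $B^*(A-\nu)^{-1/2}$ (for a fixed $\nu<\min\sigma(A)$) and the basic estimate \eqref{domcons}. First, the domain independence is immediate from the definition since $\dom(\frs(z))=\dom(\fra)$ by construction. For holomorphy of $z\mapsto\frs(z)[x]$ on $\UU$ for fixed $x\in\dom(\fra)$: the first two terms $\fra[x]-z\|x\|^2$ are trivially entire, and for the third term I would write $\langle(D-z)^{-1}B^*x,B^*x\rangle$ and note that $z\mapsto(D-z)^{-1}$ is an $\cH_2$-valued (indeed $B(\cH_2)$-valued) holomorphic function on $\rho(D)\supset\UU$ (using $b_0\ge0$, so that $\dist(z,\sigma(D))>b_0$ forces $z\notin\sigma(D)$); pairing a holomorphic vector-valued function against the fixed vectors $B^*x$ on both sides preserves holomorphy.

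The substantive point is showing $\frs(z)$ is sectorial and closed for each fixed $z\in\UU$. I would split $\frs(z)=\frt_1+\frt_z$ where $\frt_1[x,y]=\fra[x,y]$ is a closed, densely defined, bounded-below (hence sectorial) form with form domain $\dom(\fra)$, and $\frt_z[x,y]=-z\langle x,y\rangle+\langle(D-z)^{-1}B^*x,B^*y\rangle$. The standard perturbation theorem for forms (e.g.\ \cite[Theorem~VI.1.33]{katopert}) says that if $\frt_z$ is $\frt_1$-bounded with $\frt_1$-bound strictly less than $1$, then $\frs(z)=\frt_1+\frt_z$ is again sectorial and closed on the same domain $\dom(\fra)$. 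So the key estimate to establish is: for every $z\in\UU$ there are constants $\alpha(z)\ge0$ and $\beta(z)\in[0,1)$ with
\[
  \bigl|\frt_z[x]\bigr|\le\alpha(z)\|x\|^2+\beta(z)\,\bigl|\frt_1[x]\bigr|_{\text{shifted}}
  \qquad\text{for all }x\in\dom(\fra),
\]
where by the shifted version I mean comparison with $\fra[x]+M\|x\|^2$ for $M$ large enough that $\fra+M\ge1$, which is legitimate since form-boundedness may be phrased relative to any such shift. The term $-z\langle x,x\rangle$ contributes only to the $\|x\|^2$ part. For the genuinely dangerous term $\langle(D-z)^{-1}B^*x,B^*x\rangle$ I would bound its modulus by $\|(D-z)^{-1}\|\,\|B^*x\|^2\le\dist(z,\sigma(D))^{-1}\bigl(a\|x\|^2+b\fra[x]\bigr)$ using the spectral theorem for the self-adjoint $D$ together with \eqref{domcons}.

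The crux is then choosing the constants $a,b$ in \eqref{domcons} cleverly. By definition of $b_0$ in \eqref{defb0}, for any $\eps>0$ we may pick $a=a_\eps\in\RR$ and $b=b_\eps=b_0+\eps$ so that \eqref{domcons} holds; then the $\fra[x]$-coefficient in the bound on the dangerous term is $(b_0+\eps)/\dist(z,\sigma(D))$. Since $z\in\UU$ means $\dist(z,\sigma(D))>b_0$, by choosing $\eps$ small enough (depending on $z$) we get $(b_0+\eps)/\dist(z,\sigma(D))<1$, which is exactly the strict relative-bound condition required. Thus the main obstacle — making the relative bound strictly less than $1$ — is precisely what the set $\UU$ and the infimum $b_0$ are designed to deliver, and this is where the proof really uses the hypotheses. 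I expect this constant-chasing step to be the only non-routine part; once it is in place, the perturbation theorem for closed sectorial forms gives sectoriality and closedness of $\frs(z)$, and combined with the domain independence and the pointwise holomorphy already noted, all three conditions in the definition of a holomorphic family of type (a) are verified.
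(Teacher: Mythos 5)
Your proof is correct and follows essentially the same route as the paper's: you bound the perturbing term $\langle(D-z)^{-1}B^*\cdot\,,B^*\cdot\,\rangle$ by $\dist(z,\sigma(D))^{-1}(a\|x\|^2+b\,\fra[x])$ via \eqref{domcons}, choose $b$ with $b_0<b<\dist(z,\sigma(D))$ to make the relative bound strictly less than $1$, and invoke Kato's form perturbation theorem (VI.1.33). The only cosmetic difference is that you spell out the shift $\fra\mapsto\fra+M$ needed when $\alpha_-<0$, which the paper leaves implicit since the relative bound is invariant under such shifts.
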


\begin{proof}
Evidently, for any $x\in\dom(\fra)$, the function $\frs(\cdot)[x]:\mathbb{U}\to\mathbb{C}$
is holomorphic.  We must show that $\frs(z)$ is closed and sectorial for every $z\in\UU$.
Let $z\in\UU$; then there exist $a\in\RR$, $b\ge0$ such that \eqref{domcons} and
\begin{equation}\label{b0bdist}
  b_0 < b < \dist(z,\sigma(D))
\end{equation}
hold.
For $x\in\dom(\fra)$ we obtain from \eqref{domcons} that
\begin{equation}\label{est_DB}
\begin{aligned}
  \bigl|\bigl\langle(D-z)^{-1}B^*x,B^*x\bigr\rangle\bigr|
  &\le \frac{\|B^*x\|^2}{\dist(z,\sigma(D))} \\
  &\le \frac{a}{\dist(z,\sigma(D))}\|x\|^2+\frac{b}{\dist(z,\sigma(D))}\fra[x].
\end{aligned}
\end{equation}
This, together with \eqref{b0bdist}, implies that $\langle(D-z)^{-1}B^*\cdot\,,B^*\cdot\,\rangle$
is relatively bounded with respect to $\fra$ with relative bound less than one.
Hence $\frs(z)$ is closed and sectorial by \cite[Theorem~VI.1.33]{katopert}.
\end{proof}

It follows from Lemma~\ref{forms} and \cite[Theorem~VI.2.7]{katopert} that,
for each $z\in\mathbb{U}$, there corresponds an m-sectorial operator $S(z)$
to the form $\frs(z)$ in the sense that
\begin{equation}\label{defS}
  \frs(z)[x,y] = \langle S(z)x,y\rangle, \qquad x\in\dom(S(z))\subset\dom(\fra),\;\;
  y\in\dom(\fra).
\end{equation}
The family $S(\cdot)$ is called the \emph{Schur complement} of $\cM$ and is a
holomorphic family of type (B); see \cite[Theorem~VI.4.2]{katopert}.
The \emph{spectrum}, \emph{essential spectrum}, \emph{point spectrum}
and \emph{resolvent set} of the Schur complement are defined as follows:
\begin{alignat*}{2}
  \sigma(S) &\defeq \big\{z\in\mathbb{U}: 0\in\sigma(S(z))\big\}, \qquad &
  \sigmaess(S) &\defeq \big\{z\in\mathbb{U}: 0\in\sigmaess(S(z))\big\}, \\[0.5ex]
  \sigmap(S) &\defeq \big\{z\in\mathbb{U}:0\in\sigmap(S(z))\big\}, \qquad &
  \rho(S) &\defeq \big\{z\in\mathbb{U}:0\in\rho(S(z))\big\}.
\end{alignat*}

\medskip

In the next proposition we describe the domain and the action of $S(z)$ explicitly.

\begin{proposition}\label{schurdom}
Suppose that Assumption~{\rm\ref{assump}} is satisfied,
let $S$ be the Schur complement from \eqref{defS} and let $\nu<\min\sigma(A)$.
For $z\in\UU$ we have
\begin{align*}
  \dom(S(z)) &= \bigl\{x\in\dom(\fra): x + \overline{(A-\nu)^{-1}B}(D-z)^{-1}B^*x\in\dom(A)\bigr\}
  \\[1ex]
  S(z)x &= (A-\nu)\bigl(x+\overline{(A-\nu)^{-1}B}(D-z)^{-1}B^*x\bigr) + (\nu-z)x,
  \\[1ex]
  &\hspace*{50ex} x\in\dom(S(z)).
\end{align*}
\end{proposition}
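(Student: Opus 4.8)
The plan is to reduce the description of $S(z)$ to the analogous description of the operator associated with a closed form built from $\fra$ plus a bounded perturbation, mirroring the argument used for $\cM$ in Theorem~\ref{jsaness}. Fix $z\in\UU$ and $\nu<\min\sigma(A)$. By Lemma~\ref{forms} the form $\frs(z)$ is closed and sectorial on $\dom(\fra)$, so $S(z)$ is the unique m-sectorial operator with $\frs(z)[x,y]=\langle S(z)x,y\rangle$ for $x\in\dom(S(z))$, $y\in\dom(\fra)$. First I would rewrite the last term of $\frs(z)$ using \eqref{rel_ABstar1}: for $x,y\in\dom(\fra)$,
\[
  \bigl\langle (D-z)^{-1}B^*x,B^*y\bigr\rangle
  = \bigl\langle B^*(A-\nu)^{-\frac12}(A-\nu)^{\frac12}x,\,(D-z)^{-\ast}B^*(A-\nu)^{-\frac12}(A-\nu)^{\frac12}y\bigr\rangle,
\]
so the correction to $\fra-z$ is $\langle K(A-\nu)^{\frac12}x,(A-\nu)^{\frac12}y\rangle$ with the bounded operator
$K\defeq \bigl(B^*(A-\nu)^{-\frac12}\bigr)^{\ast}(D-z)^{-\ast}\bigl(B^*(A-\nu)^{-\frac12}\bigr)$ on $\cH_1$. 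Hence, writing $u=(A-\nu)^{\frac12}x$, $v=(A-\nu)^{\frac12}y$, the form $\frs(z)-\nu$ corresponds under the unitary-type transformation $x\mapsto(A-\nu)^{\frac12}x$ to the bounded form $\langle(I+K)u,v\rangle$ minus $(z-\nu)\langle(A-\nu)^{-1}u,v\rangle$ on $\cH_1$; equivalently, $S(z)-\nu = (A-\nu)^{\frac12}\,T\,(A-\nu)^{\frac12}$ where $T$ is the bounded operator associated with $I+K$ restricted appropriately, plus the term $(\nu-z)$. I would then identify, exactly as in \cite[Theorem~2.3.6]{tretbook} / Theorem~\ref{jsaness}, that $x\in\dom(S(z))$ iff $(A-\nu)^{\frac12}x\in\dom\bigl((A-\nu)^{\frac12}\bigr)$ after the bounded shift $I+K$ has been applied, which unwinds to: $x\in\dom(\fra)$ and $x+\overline{(A-\nu)^{-1}B}(D-z)^{-1}B^*x\in\dom(A)$, because $\overline{(A-\nu)^{-1}B}(D-z)^{-1}B^* = (A-\nu)^{-\frac12}\,\overline{(A-\nu)^{-\frac12}B}\,(D-z)^{-1}B^*$ by \eqref{rel_ABstar2}, and $(A-\nu)^{-1}K(A-\nu)$ collapses to precisely this operator. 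The action formula then follows by applying $(A-\nu)$ to the vector $x+\overline{(A-\nu)^{-1}B}(D-z)^{-1}B^*x\in\dom(A)$ and adding $(\nu-z)x$.

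Concretely, the cleanest route is to verify the claimed domain and action directly against the defining identity \eqref{defS}. For the inclusion $\supseteq$: take $x$ in the claimed domain, set $w\defeq x+\overline{(A-\nu)^{-1}B}(D-z)^{-1}B^*x\in\dom(A)$, put $S_0x\defeq (A-\nu)w+(\nu-z)x$, and check for every $y\in\dom(\fra)$ that $\langle S_0x,y\rangle=\frs(z)[x,y]$. Using $\langle(A-\nu)w,y\rangle=\langle(A-\nu)^{\frac12}w,(A-\nu)^{\frac12}y\rangle$ (valid since $w\in\dom(A)\subset\dom(\fra)$ and $y\in\dom(\fra)$), expanding $(A-\nu)^{\frac12}w=(A-\nu)^{\frac12}x+\overline{(A-\nu)^{-\frac12}B}(D-z)^{-1}B^*x$ via \eqref{rel_ABstar2}, and then using \eqref{rel_ABstar1} to move $(A-\nu)^{-\frac12}$ across the inner product, one obtains $\fra[x,y]-\nu\langle x,y\rangle+\langle(D-z)^{-1}B^*x,B^*y\rangle+(\nu-z)\langle x,y\rangle=\frs(z)[x,y]$, as required; hence $x\in\dom(S(z))$ and $S(z)x=S_0x$. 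For the reverse inclusion $\subseteq$: given $x\in\dom(S(z))$, one has $\frs(z)[x,y]=\langle S(z)x,y\rangle$ for all $y\in\dom(\fra)$; subtracting the term $\bigl\langle\overline{(A-\nu)^{-\frac12}B}(D-z)^{-1}B^*x,(A-\nu)^{\frac12}y\bigr\rangle$ from both sides (it is well defined and bounded in $y$ relative to $\fra[y]^{1/2}$), the left side becomes $(\fra-\nu)[x,y]$, so the functional $y\mapsto(\fra-\nu)[x,y]$ is bounded by $C\|y\|$ on $\dom(\fra)$; by the representation theorem for the self-adjoint operator $A$ this forces $x+\overline{(A-\nu)^{-1}B}(D-z)^{-1}B^*x\in\dom(A)$, and the action formula is then read off.

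I expect the main obstacle to be the careful bookkeeping of half-powers of $A-\nu$ and the closures: one must justify that $\overline{(A-\nu)^{-1}B}(D-z)^{-1}B^*x$ lies in $\dom(\fra)$ whenever $x\in\dom(\fra)$ (so that ``$x+\overline{(A-\nu)^{-1}B}(D-z)^{-1}B^*x\in\dom(A)$'' is a meaningful condition), and that the identity $\overline{(A-\nu)^{-1}B}(D-z)^{-1}B^* = (A-\nu)^{-\frac12}\overline{(A-\nu)^{-\frac12}B}(D-z)^{-1}B^*$ holds on $\dom(\fra)$; both follow from \eqref{rel_ABstar1}, \eqref{rel_ABstar2} together with the boundedness of $B^*(A-\nu)^{-\frac12}$ and of $(D-z)^{-1}$ for $z\in\UU$, but the manipulations need to be done in the right order. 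The independence of the description from the choice of $\nu$ is automatic once the identity with $\frs(z)$ is established, since $S(z)$ itself does not depend on $\nu$; alternatively it can be checked directly using the resolvent identity for $A$.
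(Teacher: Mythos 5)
Your ``concrete'' argument is essentially the paper's proof: both directions proceed by rewriting the coupling term via \eqref{rel_ABstar1}, \eqref{rel_ABstar2}, and appealing to the first representation theorem (the paper cites \cite[Theorem~VI.2.1]{katopert}; your ``representation theorem for $A$'' is the same tool). The $\supseteq$ direction is stated correctly.

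However, your $\subseteq$ step, as written, has a genuine gap. You propose to subtract $\bigl\langle\overline{(A-\nu)^{-\frac12}B}(D-z)^{-1}B^*x,(A-\nu)^{\frac12}y\bigr\rangle$ from both sides of $\frs(z)[x,y]=\langle S(z)x,y\rangle$ and conclude that ``the functional $y\mapsto(\fra-\nu)[x,y]$ is bounded by $C\|y\|$''. This does not follow: the subtracted term is bounded only relative to $\fra[y]^{1/2}\sim\|(A-\nu)^{1/2}y\|$, so after subtracting it from the $\|y\|$-bounded right-hand side you obtain an $\fra[y]^{1/2}$-bounded functional, not a $\|y\|$-bounded one. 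Moreover, even if $(\fra-\nu)[x,\cdot]$ were $\|y\|$-bounded, the representation theorem would force $x\in\dom(A)$, which is not the conclusion you want. The correct manoeuvre, which the paper makes, is not to subtract the term but to \emph{absorb} it into the form: since $\overline{(A-\nu)^{-1}B}(D-z)^{-1}B^*x\in\dom(\fra)$ (it is $(A-\nu)^{-\frac12}$ applied to a vector in $\cH_1$), one writes
\begin{equation*}
  \langle S(z)x,y\rangle
  = (\fra-\nu)\bigl[x+\overline{(A-\nu)^{-1}B}(D-z)^{-1}B^*x,\,y\bigr]+(\nu-z)\langle x,y\rangle,
\end{equation*}
so that $(\fra-\nu)[\tilde w,\cdot]$ with $\tilde w\defeq x+\overline{(A-\nu)^{-1}B}(D-z)^{-1}B^*x$ equals the $\|y\|$-bounded functional $\langle S(z)x-(\nu-z)x,\cdot\rangle$; the representation theorem then yields $\tilde w\in\dom(A)$ and $(A-\nu)\tilde w=S(z)x-(\nu-z)x$. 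Once you make this correction the argument is sound and coincides with the paper's; the speculative reduction to a bounded operator $K$ in your opening paragraph is not needed and is in any case superseded by the direct computation.
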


\begin{proof}
For $x,y\in\dom(\fra)$ we obtain from \eqref{rel_ABstar1} and \eqref{rel_ABstar2} that
\begin{align}
  &\bigl\langle(D-z)^{-1}B^*x,B^*y\bigr\rangle
  = \bigl\langle(D-z)^{-1}B^*x,B^*(A-\nu)^{-\frac{1}{2}}(A-\nu)^{\frac{1}{2}}y\bigr\rangle
  \notag\\
  &= \bigl\langle\overline{(A-\nu)^{-\frac{1}{2}}B}(D-z)^{-1}B^*x,
  (A-\nu)^{\frac{1}{2}}y\bigr\rangle
  \notag\\
  &= \bigl\langle(A-\nu)^{\frac{1}{2}}\ov{(A-\nu)^{-1}B}(D-z)^{-1}B^*x,
  (A-\nu)^{\frac{1}{2}}y\bigr\rangle.
  \label{301}
\end{align}
Now let $x\in\dom(S(z))$ and $y\in\dom(\fra)$.  Then
\begin{align*}
  &\langle S(z)x,y\rangle = \frs(z)[x,y]
  = \fra[x,y] -z\langle x,y\rangle + \bigl\langle(D-z)^{-1}B^*x,B^*y\bigr\rangle
  \\[0.5ex]
  &= \bigl\langle(A-\nu)^{\frac{1}{2}}x,(A-\nu)^{\frac{1}{2}}y\bigr\rangle
  + (\nu-z)\langle x,y\rangle
  \\[0.5ex]
  &\quad + \bigl\langle(A-\nu)^{\frac{1}{2}}\ov{(A-\nu)^{-1}B}(D-z)^{-1}B^*x,
  (A-\nu)^{\frac{1}{2}}y\bigr\rangle
  \\[0.5ex]
  &= \bigl\langle (A-\nu)^{\frac12}\bigl[x+\ov{(A-\nu)^{-1}B}(D-z)^{-1}B^*x\bigr],
  (A-\nu)^{\frac12}y\bigr\rangle + (\nu-z)\langle x,y\rangle.
\end{align*}
It follows from \cite[Theorem~VI.2.1]{katopert} that
$x + \overline{(A-\nu)^{-1}B}(D-z)^{-1}B^*x\in\dom(A)$ and
\[
  (A-\nu)\bigl(x+\overline{(A-\nu)^{-1}B}(D-z)^{-1}B^*x\bigr) = S(z)x - (\nu-z)x.
\]
Conversely, suppose that $x\in\dom(\fra)$ with $x + \ov{(A-\nu)^{-1}B}(D-z)^{-1}B^*x\in\dom(A)$.
Then, for $y\in\dom(\fra)$, we obtain from \eqref{301} that
\begin{align*}
  & \frs(z)[x,y] = \fra[x,y] - z\langle x,y\rangle + \bigl\langle(D-z)^{-1}B^*x,B^*y\bigr\rangle \\
  &= \bigl\langle(A-\nu)^{\frac12}\bigl[x+\ov{(A-\nu)^{-1}B}(D-z)^{-1}B^*x\bigr],
  (A-\nu)^{\frac12}y\bigr\rangle + (\nu-z)\langle x,y\rangle \\
  &= \bigl\langle(A-\nu)\bigl[x+\overline{(A-\nu)^{-1}B}(D-z)^{-1}B^*x\bigr],y\rangle
  + (\nu - z)\langle x,y\rangle.
\end{align*}
Now \cite[Theorem~VI.2.1\,(iii)]{katopert} implies that $x\in\dom(S(z))$.
\end{proof}

The next lemma gives a first connection between the operator $\cM$ and the
Schur complement $S$.

\begin{lemma}\label{le:Sz_T_z}
Let $z\in\UU$.
\begin{myenum}
\item
If $x\in\dom(S(z))$, then
\[
  \begin{pmatrix} x \\[0.5ex] (D-z)^{-1}B^*x \end{pmatrix} \in \dom(\cM)
\]
and
\[
  (\cM-z)\begin{pmatrix} x \\[0.5ex] (D-z)^{-1}B^*x \end{pmatrix}
  = \begin{pmatrix} S(z)x \\[0.5ex] 0 \end{pmatrix}.
\]
\item
If\, $(x,y)^T\in\dom(\cM)$ and
\[
  (\cM-z)\binom{x}{y} = \binom{u}{0}
\]
with some $u\in\cH_1$, then $x\in\dom(S(z))$, $S(z)x=u$ and $y=(D-z)^{-1}B^*x$.
\end{myenum}
\end{lemma}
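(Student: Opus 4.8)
The plan is to verify the two assertions by using the explicit description of $\dom(\cM)$ and the action of $\cM$ from Theorem~\ref{jsaness}, together with the explicit description of $\dom(S(z))$ and $S(z)$ from Proposition~\ref{schurdom}; the key observation is that both descriptions are built from the same expression $x+\overline{(A-\nu)^{-1}B}(\,\cdot\,)\in\dom(A)$, and that choosing the second component to be $(D-z)^{-1}B^*x$ makes the inner term $\overline{(A-\nu)^{-1}B}(D-z)^{-1}B^*x$ appear in the first component of $\cM$.

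For part~(i), let $x\in\dom(S(z))$ and set $y\defeq(D-z)^{-1}B^*x$. Since $x\in\dom(\fra)\subset\dom(B^*)$ we have $B^*x\in\cH_2$, and since $z\in\UU\subset\rho(D)$ the vector $y$ lies in $\dom(D)$. By Proposition~\ref{schurdom}, $x\in\dom(S(z))$ means precisely that $x+\overline{(A-\nu)^{-1}B}(D-z)^{-1}B^*x = x+\overline{(A-\nu)^{-1}B}y\in\dom(A)$, which is exactly the membership condition in \eqref{top1} for $(x,y)^T$; hence $(x,y)^T\in\dom(\cM)$. Now I compute $(\cM-z)(x,y)^T$ using \eqref{top2}. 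The first component is $(A-\nu)(x+\overline{(A-\nu)^{-1}B}y)+\nu x - zx = (A-\nu)(x+\overline{(A-\nu)^{-1}B}(D-z)^{-1}B^*x)+(\nu-z)x$, which by the formula for $S(z)$ in Proposition~\ref{schurdom} equals $S(z)x$. The second component is $-B^*x+Dy-zy = -B^*x+(D-z)(D-z)^{-1}B^*x = -B^*x+B^*x = 0$. This gives the claimed identity.

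For part~(ii), suppose $(x,y)^T\in\dom(\cM)$ with $(\cM-z)(x,y)^T=(u,0)^T$. Looking at the second component of \eqref{top2}, the equation $-B^*x+Dy-zy=0$ reads $(D-z)y=B^*x$; here $x\in\dom(\fra)$ by Theorem~\ref{jsaness} (last part), so $B^*x$ is defined, and since $z\in\rho(D)$ we may invert to get $y=(D-z)^{-1}B^*x$. Substituting this back, the membership condition \eqref{top1}, namely $x+\overline{(A-\nu)^{-1}B}y\in\dom(A)$, becomes $x+\overline{(A-\nu)^{-1}B}(D-z)^{-1}B^*x\in\dom(A)$, which by Proposition~\ref{schurdom} says $x\in\dom(S(z))$. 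Finally, the first component of \eqref{top2} gives $u=(A-\nu)(x+\overline{(A-\nu)^{-1}B}(D-z)^{-1}B^*x)+(\nu-z)x=S(z)x$, again by the formula in Proposition~\ref{schurdom}.

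I do not expect a serious obstacle here: the proof is essentially a matter of matching up the two explicit formulas and keeping track of the bookkeeping (which vectors lie in which domain, and that $(D-z)^{-1}$ makes sense on $\UU$). The one point deserving a line of care is the justification that $B^*x$ is defined whenever $(x,y)^T\in\dom(\cM)$ — this is supplied by the final clause of Theorem~\ref{jsaness}, that $x\in\dom(\fra)\subset\dom(B^*)$ — and the observation that the relation $(D-z)y=B^*x$ can be read off directly from the second row of $\cM$ in \eqref{top2} without needing the refined identity \eqref{form2}.
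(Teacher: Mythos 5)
Your proof is correct and takes essentially the same approach as the paper: both arguments match the explicit descriptions of $\dom(\cM)$, $\cM$ from Theorem~\ref{jsaness} against those of $\dom(S(z))$, $S(z)$ from Proposition~\ref{schurdom}, using the substitution $y=(D-z)^{-1}B^*x$ and the fact that $x\in\dom(\fra)$ for $(x,y)^T\in\dom(\cM)$. Your extra remark that $(D-z)y=B^*x$ can be read off directly from the second row of \eqref{top2} is also how the paper proceeds (it does not invoke \eqref{form2} here either).
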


\begin{proof}
(i)
Let $x\in\dom(S(z))$ and set $y\defeq(D-z)^{-1}B^*x$.
Then $x\in\dom(\frs(z))=\dom(\fra)$ and hence $x\in\dom(B^*)$.
Moreover, $y\in\dom(D)$.  Now, combining Theorem~\ref{jsaness} and Proposition~\ref{schurdom}
we obtain that $(x,y)^T\in\dom(\cM)$ and
\begin{align*}
  (\cM-z)\binom{x}{y}
  &= \begin{pmatrix}
    (A-\nu)\bigl(x+\ov{(A-\nu)^{-1}B}(D-z)^{-1}B^*x\bigr) + \nu x - zx \\[1ex]
    -B^*x + (D-z)(D-z)^{-1}B^*x
  \end{pmatrix}
  \\[0.5ex]
  &= \begin{pmatrix} S(z)x \\[0.5ex] 0 \end{pmatrix}.
\end{align*}

(ii)
The assumption implies that $y=(D-z)^{-1}B^*x$.  The claim follows again
from Theorem~\ref{jsaness} and Proposition~\ref{schurdom}.
\end{proof}

Before we prove the spectral equivalence of $\cM$ and $S$, we need a lemma
about approximative eigensequences, which is also used in later sections.

\begin{lemma}\label{le:approx_es}
Let $z\in\CC$ and let $(x_n,y_n)^T\in\dom(\cM)$, $n\in\NN$, such that
\[
  \|x_n\|^2+\|y_n\|^2 = 1 \qquad\text{and}\qquad
  (\cM-z)\binom{x_n}{y_n} \to 0 \quad\text{as }n\to\infty.
\]
Then the following statements hold.
\begin{myenum}
\item
The sequences $\fra[x_n]$ and $\|B^*x_n\|$ are bounded.
\item
If\, $z\in\rho(D)$, then
\begin{equation}\label{yn}
  y_n = (D-z)^{-1}B^*x_n + w_n \qquad\text{with}\quad w_n\to0.
\end{equation}
\item
If\, $z\in\UU$, then
\[
  \liminf_{n\to\infty} \|x_n\| > 0.
\]
Moreover, if\, $\xi_n\in\dom(B^*)$, $n\in\NN$, are such that $(\xi_n)$ and $(B^*\xi_n)$
are bounded sequences, then
\[
  \lim_{n\to\infty} \frs(z)[x_n,\xi_n] = 0.
\]
In particular,
\begin{equation}\label{sxnto0}
  \lim_{n\to\infty} \frs(z)[x_n] = 0.
\end{equation}
\item
If\, $z\in\UU$ and $x_n\to x_0$ for some $x_0\in\cH_1$, then
\[
  x_0\in\dom(\fra) \qquad\text{and}\qquad
  B^*x_n\to B^*x_0.
\]
\end{myenum}
\end{lemma}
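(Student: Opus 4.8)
The plan is to prove the four parts of Lemma~\ref{le:approx_es} in order, using the form identities \eqref{form1}--\eqref{form2}, the resolvent estimates from the proof of Lemma~\ref{forms}, and Lemma~\ref{le:Sz_T_z}.

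\textbf{Part (i).} Write $\cM\binom{x_n}{y_n}=\binom{u_n}{v_n}$, so that $(u_n,v_n)^T=z\binom{x_n}{y_n}+\rmo(1)$; in particular $(u_n)$ and $(v_n)$ are bounded. From \eqref{form1}--\eqref{form2},
\[
  \fra[x_n] = \langle u_n,x_n\rangle - \langle y_n,B^*x_n\rangle,\qquad
  \frd[y_n] = \langle v_n,y_n\rangle + \langle B^*x_n,y_n\rangle.
\]
Adding these (and using that $\frd[y_n]\le\delta_+\|y_n\|^2$ is bounded above) gives
$\fra[x_n]\le \langle u_n,x_n\rangle + \langle v_n,y_n\rangle - \frd[y_n] \le \text{const}$, so $\fra[x_n]$ is bounded above; since $\fra[x_n]\ge\alpha_-\|x_n\|^2$ it is also bounded below, hence bounded. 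Then \eqref{domcons} gives $\|B^*x_n\|^2\le a\|x_n\|^2+b\fra[x_n]$ bounded.

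\textbf{Part (ii).} For $z\in\rho(D)$, the second component of $(\cM-z)\binom{x_n}{y_n}$ is $-B^*x_n+(D-z)y_n\to0$, i.e. $(D-z)y_n = B^*x_n+\rmo(1)$; applying the bounded operator $(D-z)^{-1}$ yields $y_n=(D-z)^{-1}B^*x_n+w_n$ with $w_n=(D-z)^{-1}(\rmo(1))\to0$.

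\textbf{Part (iii).} Suppose, for contradiction, that a subsequence has $x_n\to0$; then by \eqref{yn} $y_n\to0$ as well (since $B^*x_n$ is bounded and $(D-z)^{-1}$ is bounded, using that $z\in\UU\subset\rho(D)$), contradicting $\|x_n\|^2+\|y_n\|^2=1$. For the form convergence: fix $a,b$ with \eqref{domcons} and $b_0<b<\dist(z,\sigma(D))$. Using \eqref{yn} and \eqref{connection_T_form} with $(\hat x,\hat y)=(\xi_n,0)$ one computes
\[
  \frs(z)[x_n,\xi_n] = \fra[x_n,\xi_n] - z\langle x_n,\xi_n\rangle + \langle(D-z)^{-1}B^*x_n,B^*\xi_n\rangle
  = \langle u_n,\xi_n\rangle + \langle y_n - (D-z)^{-1}B^*x_n,\,B^*\xi_n\rangle,
\]
where $\binom{u_n}{v_n}=\cM\binom{x_n}{y_n}$ and I used $\fra[x_n,\xi_n]+\langle y_n,B^*\xi_n\rangle=\langle u_n,\xi_n\rangle+z\langle x_n,\xi_n\rangle-\rmo(1)\cdot\langle\xi_n\rangle$ coming from the first component of $(\cM-z)\binom{x_n}{y_n}\to0$ tested against $\xi_n$. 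The first term is $\rmo(1)$ since $u_n=zx_n+\rmo(1)$ and $x_n,\xi_n$ are bounded — wait, that only gives boundedness; but in fact $\langle u_n,\xi_n\rangle$ need not vanish. Here is the cleaner route: test $(\cM-z)\binom{x_n}{y_n}\to0$ against $\binom{\xi_n}{(D-z)^{-1}B^*\xi_n}$, which lies in $\dom(\cM)$ by Lemma~\ref{le:Sz_T_z}(i) applied to the form (all that is needed is $\xi_n\in\dom(\fra)$, which holds since $B^*\xi_n$ is bounded forces nothing — so instead restrict to $\xi_n\in\dom(\fra)$, noting $\dom(\fra)$ is dense and the quantities extend continuously). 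Using $J$-self-adjointness / \eqref{connection_T_form} the inner product $\bigl\langle(\cM-z)\binom{x_n}{y_n},\binom{\xi_n}{(D-z)^{-1}B^*\xi_n}\bigr\rangle$ equals $\frs(z)[x_n,\xi_n]$ after the resolvent cancellation in the second component (exactly as in the proof of Lemma~\ref{le:Sz_T_z}(i) run in reverse), and the left side is $\le\|(\cM-z)\binom{x_n}{y_n}\|\cdot\bigl(\|\xi_n\|^2+\|(D-z)^{-1}B^*\xi_n\|^2\bigr)^{1/2}\to0$ by Cauchy--Schwarz since the test vectors are bounded. Taking $\xi_n=x_n$ gives \eqref{sxnto0}.

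\textbf{Part (iv).} Assume $z\in\UU$ and $x_n\to x_0$. By (i), $\fra[x_n]$ is bounded, so $(x_n)$ is bounded in the form-norm $\|\cdot\|_{\fra}$ of $\fra$; since $\fra$ is closed (a Hilbert space in the form norm), a bounded sequence has a weakly convergent subsequence, and its weak limit must be $x_0$ (as weak form convergence implies weak $\cH_1$-convergence), so $x_0\in\dom(\fra)$. For the norm convergence $B^*x_n\to B^*x_0$: apply \eqref{sxnto0} — more precisely the polarised version $\frs(z)[x_n,\xi]\to0$ for fixed $\xi\in\dom(\fra)$, shown in (iii) by taking the constant sequence $\xi_n\equiv\xi$ — together with the fact that $\frs(z)[x_n,\xi]\to\frs(z)[x_0,\xi]$ would require $\fra[x_n,\xi]+\langle(D-z)^{-1}B^*x_n,B^*\xi\rangle$ to converge; combined with \eqref{sxnto0} and the relative-boundedness inequality \eqref{est_DB} (with relative bound $<1$), the quadratic form $\frt_n\defeq\fra[x_n-x_0]+\Re\langle(D-z)^{-1}B^*(x_n-x_0),B^*(x_n-x_0)\rangle \ge (1-b/\dist(z,\sigma(D)))\fra[x_n-x_0] - \text{const}\|x_n-x_0\|^2$, and one shows $\frt_n\to0$ by expanding $\frs(z)[x_n]$, using \eqref{sxnto0}, $x_n\to x_0$ in $\cH_1$, and the established $x_0\in\dom(\fra)$; hence $\fra[x_n-x_0]\to0$, and then $\|B^*(x_n-x_0)\|^2\le a\|x_n-x_0\|^2+b\fra[x_n-x_0]\to0$ by \eqref{domcons}.

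\textbf{Main obstacle.} The delicate point is part (iii): justifying that testing the approximate eigenequation against $\binom{\xi_n}{(D-z)^{-1}B^*\xi_n}$ produces exactly $\frs(z)[x_n,\xi_n]$ — this needs the identity from Lemma~\ref{le:Sz_T_z}(i) run on the form level for the \emph{test} vector while $(x_n,y_n)^T$ is only assumed to lie in $\dom(\cM)$ (not $\dom(S(z))$), so one must use \eqref{connection_T_form} with $(\hat x,\hat y)=(\xi_n,(D-z)^{-1}B^*\xi_n)$ and carefully match terms with Definition~\ref{def:schur}. Equally delicate is part (iv): upgrading weak form-convergence to norm form-convergence, which is where the relative-bound-strictly-less-than-one estimate \eqref{est_DB} and the convergence \eqref{sxnto0} must be combined quantitatively.
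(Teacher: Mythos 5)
There are several genuine gaps.

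\textbf{Part (i).} After adding \eqref{form1} and \eqref{form2} you conclude
$\fra[x_n]\le\langle u_n,x_n\rangle+\langle v_n,y_n\rangle-\frd[y_n]\le\text{const}$,
invoking $\frd[y_n]\le\delta_+\|y_n\|^2$. But an upper bound for $\frd[y_n]$ gives a
\emph{lower} bound for $-\frd[y_n]$, and since $D$ is only assumed bounded from above
(so may be unbounded below), $-\frd[y_n]$ need not be bounded above. Your chain of
inequalities therefore does not close. The paper's proof uses only \eqref{form1}:
$\fra[x_n] = -\langle y_n,B^*x_n\rangle+\cO(1)\le\|y_n\|\,\|B^*x_n\|+\cO(1)
\le\sqrt{b\fra[x_n]+a\|x_n\|^2}+\cO(1)$ by \eqref{domcons}, a self-referential
inequality in $\fra[x_n]$ that forces boundedness without touching $\frd[y_n]$.

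\textbf{Part (iii), the liminf claim.} You assume a subsequence $x_{n_k}\to0$ and
conclude $y_{n_k}\to0$ from \eqref{yn} "since $B^*x_n$ is bounded and $(D-z)^{-1}$ is
bounded". But boundedness of $B^*x_{n_k}$ gives only boundedness of
$(D-z)^{-1}B^*x_{n_k}$, not convergence to $0$; and since $B^*$ is unbounded,
$x_{n_k}\to0$ does \emph{not} imply $B^*x_{n_k}\to0$. This is exactly the nontrivial
content of the claim. The paper first proves the form convergence \eqref{sxnto0} and
then combines it with the two-sided estimate \eqref{inequs}: taking $x=x_{n_k}$ there,
with $\|x_{n_k}\|\to0$ and $\frs(z)[x_{n_k}]\to0$, forces $\fra[x_{n_k}]\to0$, whence
$B^*x_{n_k}\to0$ by \eqref{domcons}, which contradicts $\|y_{n_k}\|\to1$. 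Your order
of argument (liminf before the form statement) cannot work, because the liminf genuinely
needs \eqref{sxnto0}.

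\textbf{Part (iii), form convergence.} The test-vector idea is the right one, but
$\binom{\xi_n}{(D-z)^{-1}B^*\xi_n}$ only produces the clean identity for real $z$:
$\langle(D-z)y_n,(D-z)^{-1}B^*\xi_n\rangle
=\langle y_n,(D-\bar z)(D-z)^{-1}B^*\xi_n\rangle$, and
$(D-\bar z)(D-z)^{-1}=I+(z-\bar z)(D-z)^{-1}$ introduces extra terms when $\Im z\ne0$.
You would need $(D-\bar z)^{-1}B^*\xi_n$ in the second slot. The paper avoids this
by using $\binom{\xi_n}{v_n}$ with $v_n$ the second component of
$(\cM-z)\binom{x_n}{y_n}$, which gives the exact identity
$\frs(z)[x_n,\xi_n]+\langle(D-z)^{-1}v_n,B^*\xi_n\rangle+\|v_n\|^2$ for all $z\in\UU$.

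\textbf{Part (iv).} Your sketch aims at $\fra[x_n-x_0]\to0$ from
$\frs(z)[x_n-x_0]\to0$. But (for real $z$)
$\frs(z)[x_n-x_0]=\frs(z)[x_n]-2\Re\frs(z)[x_n,x_0]+\frs(z)[x_0]$, and while the first
two terms tend to $0$ by (iii), the constant $\frs(z)[x_0]$ is not known to vanish a
priori (nothing places $x_0$ in $\ker S(z)$). The paper sidesteps this by explicitly
noting that items (i)--(iii) hold for merely bounded sequences and applying them to the
\emph{differences} $x_n-x_m$: then $\frs(z)[x_n-x_m]\to0$ as $n,m\to\infty$,
\eqref{inequs} gives $\fra[x_n-x_m]\to0$, so $(x_n)$ is Cauchy in the form norm; closedness
of $\fra$ gives $x_0\in\dom(\fra)$, and \eqref{domcons} with closedness of $B^*$ gives
$B^*x_n\to B^*x_0$. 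Your alternative of extracting a weak limit in the form norm does
correctly give $x_0\in\dom(\fra)$, but it does not yield the strong convergence
$B^*x_n\to B^*x_0$ without the Cauchy argument.
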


\begin{proof}
For the first items we may assume that $(x_n,y_n)^T$ is only a bounded sequence
rather than a normalised one.  This is used in the the proof of item (iv).

(i)
Set
\begin{equation}\label{unvn_M_z}
  \binom{u_n}{v_n} \defeq (\cM-z)\binom{x_n}{y_n}
  = \begin{pmatrix}
    (A-\nu)\bigl(x_n+\ov{(A-\nu)^{-1}B}y_n\bigr)+(\nu-z)x_n \\[1ex]
    -B^*x_n+(D-z)y_n
  \end{pmatrix}.
\end{equation}
From \eqref{form1} we obtain
\begin{equation}\label{form11}
  \fra[x_n] - z\|x_n\|^2 + \langle y_n,B^*x_n\rangle = \langle u_n,x_n\rangle \to 0.
\end{equation}
This, together with \eqref{domcons} implies that, as $n\to\infty$,
\begin{align*}
  \fra[x_n] &= -\langle y_n,B^*x_n\rangle + \cO(1) \\[0.5ex]
  &\le \|y_n\|\,\|B^*x_n\| + \cO(1) \\[0.5ex]
  &\le \|y_n\|\sqrt{b\fra[x_n]+a\|x_n\|^2} + \cO(1).
\end{align*}
It follows that $\fra[x_n]$ is bounded and, again by \eqref{domcons}, that also
$\|B^*x_n\|$ is bounded.

(ii)
Let $z\in\rho(D)$.
Comparing the second components in \eqref{unvn_M_z} we obtain
\begin{equation}\label{yn1}
  y_n = (D-z)^{-1}B^*x_n + (D-z)^{-1}v_n,
\end{equation}
which implies \eqref{yn}.

(iii)
Let $z\in\UU$ and let $\xi_n$ be as in the statement of the lemma.
Relations~\eqref{connection_T_form} and \eqref{yn1} yield
\begin{align*}
  &\biggl\langle(\cM-z)\binom{x_n}{y_n},\binom{\xi_n}{v_n}\biggr\rangle
  \\[0.5ex]
  &= \fra[x_n,\xi_n] - z\langle x_n,\xi_n\rangle + \langle y_n,B^*\xi_n\rangle
  - \langle B^*x_n,v_n\rangle + \bigl\langle(D-z)y_n,v_n\bigr\rangle
  \\[0.5ex]
  &= \fra[x_n,\xi_n] - z\langle x_n,\xi_n\rangle + \bigl\langle(D-z)^{-1}B^*x_n,B^*\xi_n\bigr\rangle
  + \bigl\langle(D-z)^{-1}v_n,B^*\xi_n\bigr\rangle
  \\[0.5ex]
  &\quad - \langle B^*x_n,v_n\rangle + \langle B^*x_n,v_n\rangle + \langle v_n,v_n\rangle
  \\[0.5ex]
  &= \frs(z)[x_n,\xi_n] + \bigl\langle(D-z)^{-1}v_n,B^*\xi_n\bigr\rangle + \|v_n\|^2.
\end{align*}
The left-hand side and the second and the third terms on the right-hand side converge to $0$
by the assumption on $\xi_n$.  Hence $\frs(z)[x_n,\xi_n]\to0$.

For $\xi_n=x_n$ the assumptions on $\xi_n$ are satisfied because of item (i);
hence $\frs(z)[x_n]\to0$.
Note that this remains true if $(x_n,y_n)^T$ is just bounded.

Before we prove the remaining items, let us show the following inequalities.
Let $a\in\RR$, $b\ge0$ such that \eqref{domcons} and \eqref{b0bdist} hold
and let $\alpha_-$ be as in \eqref{defalmidepl}.
For $x\in\dom(\fra)$ we obtain from \eqref{est_DB} that
\begin{align*}
  \fra[x] &\le \bigl|\fra[x]\bigr|
  = \bigl|\frs(z)[x]+z\|x\|^2-\bigl\langle(D-z)^{-1}B^*x,B^*x\bigr\rangle\bigr|
  \\[0.5ex]
  &\le \bigl|\frs(z)[x]\bigr| + |z|\,\|x\|^2 + \bigl|\bigl\langle(D-z)^{-1}B^*x,B^*x\bigr\rangle\bigr|
  \\[0.5ex]
  &\le \bigl|\frs(z)[x]\bigr| + |z|\,\|x\|^2
  + \frac{b\fra[x]}{\dist(z,\sigma(D))} + \frac{a\|x\|^2}{\dist(z,\sigma(D))}
\end{align*}
and hence
\begin{equation}\label{inequs}
\begin{aligned}
  \alpha_-\frac{\dist(z,\sigma(D))-b}{\dist(z,\sigma(D))}\|x\|^2
  &\le \frac{\dist(z,\sigma(D))-b}{\dist(z,\sigma(D))}\fra[x] \\[0.5ex]
  &\le \bigl|\frs(z)[x]\bigr| + \biggl(|z|+\frac{a}{\dist(z,\sigma(D))}\biggr)\|x\|^2
\end{aligned}
\end{equation}
In the following assume that $\|x_n\|^2+\|y_n\|^2=1$.
Next we show the first statement of (iii), i.e.\ that $\liminf_{n\to\infty}\|x_n\|>0$.
Suppose to the contrary that there exists
a subsequence $(x_{n_k})$ of $(x_n)$ such that $x_{n_k}\to0$.
Then the left-hand and the right-hand sides of \eqref{inequs} with $x=x_{n_k}$
converge to $0$ as $k\to\infty$ by the already proved relation \eqref{sxnto0}.
Hence $\fra[x_{n_k}]\to0$ since $\dist(z,\sigma(D))>b$.
From \eqref{domcons} we obtain that $(D-z)^{-1}B^*x_{n_k}\to0$, which is a
contradiction to \eqref{yn} and the relation $\|y_{n_k}\|\to1$.

(iv)
Assume that $x_n\to x_0$.  It follows from the already proved items,
applied to $x_n-x_m$ instead of $x_n$,
that $\frs(z)[x_n-x_m]\to0$ as $n,m\to\infty$.
Hence the left-hand and the right-hand sides of \eqref{inequs} with $x=x_n-x_m$
converge to $0$ as $n,m\to\infty$, and therefore also $\fra[x_n-x_m]\to0$.
This means that $x_n\stackrel{\fra}\longrightarrow x_0$ (see \cite[\S VI.3]{katopert}),
which implies that $x_0\in\dom(\fra)$ because $\fra$ is closed.
Again by \eqref{domcons} we obtain that $\|B^*x_n-B^*x_m\|\to0$ as $n,m\to\infty$.
Since $B^*$ is closed, it follows that $B^*x_n\to B^*x_0$.
\end{proof}

The theorem below is analogous to \cite[Proposition~2.2]{KLT04} which treats the
self-adjoint case.  The last part of our proof is more involved in the sense that it
uses Lemma~\ref{le:approx_es}.  This is due to the loss of
self-adjointness and the possibility of non-real spectrum.
See also \cite[Propositions~2.7 and 2.8]{JT02} for a similar result under the
assumption that $B^*$ is $A$-form-compact.

\begin{theorem}\label{specequiv}
Suppose that Assumption~{\rm\ref{assump}} is satisfied, let $\cM$ be the operator
as in Theorem~{\rm\ref{jsaness}} and let $S$ be its Schur complement as in \eqref{defS}.
Then the following relations hold:
\begin{align}
  & \sigma(S) = \sigma(\cM)\cap\UU, \qquad
  \sigmap(S) = \sigmap(\cM)\cap\UU,
  \label{sigmaST}\\[1ex]
  &\! \nul(S(z)) = \nul(\cM - z) \qquad\text{for}\;\;z\in\UU.
  \label{nul_eq}
\end{align}
\end{theorem}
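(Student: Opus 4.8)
The plan is to read off the nullity identity \eqref{nul_eq} and the point-spectrum identity from Lemma~\ref{le:Sz_T_z}, and then to obtain $\sigma(S)=\sigma(\cM)\cap\UU$ from the two inclusions $\rho(\cM)\cap\UU\subseteq\rho(S)$ and $\rho(S)\subseteq\rho(\cM)$; the second of these is the substantial one and is where Lemma~\ref{le:approx_es} is needed.

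For \eqref{nul_eq}, fix $z\in\UU$. Lemma~\ref{le:Sz_T_z}(i) provides a linear injection $\ker(S(z))\to\ker(\cM-z)$, $x\mapsto\bigl(x,(D-z)^{-1}B^*x\bigr)^T$, while Lemma~\ref{le:Sz_T_z}(ii) with $u=0$ provides a map $\ker(\cM-z)\to\ker(S(z))$, $(x,y)^T\mapsto x$; since part~(ii) also forces $y=(D-z)^{-1}B^*x$ for every $(x,y)^T\in\ker(\cM-z)$, these two maps are mutually inverse, so $\nul(S(z))=\nul(\cM-z)$. Restricting to the $z\in\UU$ at which this common value is positive gives $\sigmap(S)=\sigmap(\cM)\cap\UU$.

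To show $\rho(\cM)\cap\UU\subseteq\rho(S)$, let $z\in\rho(\cM)\cap\UU$: then $S(z)$ is injective by \eqref{nul_eq}, and it is onto because for $u\in\cH_1$ the vector $(x,y)^T\defeq(\cM-z)^{-1}(u,0)^T$ lies in $\dom(\cM)$, whence Lemma~\ref{le:Sz_T_z}(ii) yields $x\in\dom(S(z))$ with $S(z)x=u$; as $S(z)$ is m-sectorial and hence closed, a closed bijection has bounded inverse, so $0\in\rho(S(z))$. For the converse, let $z\in\rho(S)$, so $z\in\UU$, hence also $\bar z\in\UU$ (because $\sigma(D)\subset\RR$), and $S(\bar z)=S(z)^*$ since $\frs(\bar z)[x,y]=\overline{\frs(z)[y,x]}$, so $0\in\rho(S(\bar z))$ too. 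Then $\cM-z$ is injective by \eqref{nul_eq}, and, using $\cM^*=J\cM J$ from Corollary~\ref{adjcor} together with \eqref{nul_eq} at $\bar z$, $\ran(\cM-z)^{\perp}=\ker(\cM^*-\bar z)=J\ker(\cM-\bar z)=\{0\}$, so $\ran(\cM-z)$ is dense. Hence it remains only to prove that $\ran(\cM-z)$ is closed, equivalently --- since $\cM-z$ is closed and injective --- that $\cM-z$ is bounded below.

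This last step is the one I expect to be the main obstacle, and I would handle it by contradiction. Suppose there were $(x_n,y_n)^T\in\dom(\cM)$ with $\|x_n\|^2+\|y_n\|^2=1$ and $(\cM-z)(x_n,y_n)^T\to0$. By Lemma~\ref{le:approx_es}, the sequences $\fra[x_n]$ and $\|B^*x_n\|$ are bounded, $y_n=(D-z)^{-1}B^*x_n+w_n$ with $w_n\to0$, and $\liminf_n\|x_n\|>0$. Eliminating $y_n$ from the first component of $(\cM-z)(x_n,y_n)^T$ (using the action of $\cM$ from Theorem~\ref{jsaness}) and applying $(A-\nu)^{-1/2}$, the condition $(\cM-z)(x_n,y_n)^T\to0$ becomes $\hat L_z\hat x_n\to0$, where $\hat x_n\defeq(A-\nu)^{1/2}x_n\in\cH_1$ and
\[
  \hat L_z\defeq I+\overline{(A-\nu)^{-1/2}B}\,(D-z)^{-1}B^*(A-\nu)^{-1/2}-(z-\nu)(A-\nu)^{-1}
\]
is a bounded operator on $\cH_1$ (each summand is bounded, since $B^*(A-\nu)^{-1/2}$ is bounded and $z\in\rho(D)$). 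Because $S(z)=(A-\nu)^{1/2}\hat L_z(A-\nu)^{1/2}$ on $\dom(S(z))$ --- a reformulation of the standard equivalence between invertibility of a block operator matrix and of its Schur complement --- the hypothesis $0\in\rho(S(z))$ is exactly equivalent to $\hat L_z$ being boundedly invertible on $\cH_1$; therefore $\hat x_n=\hat L_z^{-1}(\hat L_z\hat x_n)\to0$, and so $x_n=(A-\nu)^{-1/2}\hat x_n\to0$, contradicting $\liminf_n\|x_n\|>0$. Thus $\cM-z$ is bounded below, hence bijective with bounded inverse, so $z\in\rho(\cM)$; together with the earlier inclusion this yields $\rho(S)=\rho(\cM)\cap\UU$, i.e.\ $\sigma(S)=\sigma(\cM)\cap\UU$, which with the first part completes the proof. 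The difficulty here is that $A$ is not assumed to have compact resolvent (equivalently, $B^*$ is not assumed $A$-form-compact, in contrast to \cite{JT02}), so boundedness of $(x_n)$ in the form of $A$ does not by itself yield a strongly convergent subsequence; it is the additional information carried by Lemma~\ref{le:approx_es} --- together with the invertibility reformulation above --- that forces $x_n\to0$.
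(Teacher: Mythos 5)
Your overall architecture is sound and parallel to the paper's for parts~\eqref{nul_eq}, $\sigmap(S)=\sigmap(\cM)\cap\UU$, and the inclusion $\rho(\cM)\cap\UU\subseteq\rho(S)$, but the hard inclusion $\rho(S)\subseteq\rho(\cM)$ is handled by a genuinely different route, and that route as written has a gap you should not gloss over.

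Where the paper writes down the explicit resolvent formula \eqref{invrep} on the dense set $\cH_1\times\dom(B(D-z)^{-1})$, establishes that $S(z)^{-1}B$ is bounded (via $(S(z)^{-1}B)^*=B^*S(\bar z)^{-1}$ and $\dom(S(\bar z))\subset\dom(B^*)$), and then reads off $x_n\to0$ from \eqref{reseq} to contradict Lemma~\ref{le:approx_es}(iii), you instead first obtain density of $\ran(\cM-z)$ from $\cM^*=J\cM J$ and $0\in\rho(S(\bar z))$ (which is fine and rather clean), and then reduce everything to the claim that $\hat L_z \defeq I+\overline{(A-\nu)^{-1/2}B}(D-z)^{-1}B^*(A-\nu)^{-1/2}-(z-\nu)(A-\nu)^{-1}$ is boundedly invertible whenever $0\in\rho(S(z))$. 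The algebra that reduces $(\cM-z)\binom{x_n}{y_n}\to0$ to $\hat L_z(A-\nu)^{1/2}x_n\to0$ is correct (modulo a term $\overline{(A-\nu)^{-1/2}B}\,w_n\to 0$), and so is the identity $S(z)=(A-\nu)^{1/2}\hat L_z(A-\nu)^{1/2}$ on $\dom(S(z))$.

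However, the step ``the hypothesis $0\in\rho(S(z))$ is exactly equivalent to $\hat L_z$ being boundedly invertible'' is asserted as a standard reformulation, and it is not: $S(z)=T\hat L_z T$ with $T=(A-\nu)^{1/2}$ unbounded does not by itself transfer bounded invertibility between $S(z)$ (acting $\dom(S(z))\to\cH_1$) and $\hat L_z$ (acting $\cH_1\to\cH_1$), because the outer factor $T$ in a putative formula $\hat L_z^{-1}=TS(z)^{-1}T$ is unbounded. Nor is this the Frobenius--Schur factorization you invoke by name --- that equivalence is between $\cM-z$ and the Schur complement, not between two realizations of the Schur complement. The direction you actually need, $0\in\rho(S(z))\Rightarrow\hat L_z$ bounded below, is true but requires a proof of roughly the same weight as the paper's final step. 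One way: from $0\in\rho(S(\bar z))=\rho(S(z)^*)$ and the relative form bound \eqref{est_DB} with relative bound $<1$, derive $(\fra-\nu)[y]\le C\|S(\bar z)y\|^2$ for $y\in\dom(S(\bar z))$; then, for arbitrary $x\in\dom(\fra)$ and $u\in\cH_1$, write $\langle x,u\rangle=\langle x,S(\bar z)S(\bar z)^{-1}u\rangle=\frs(z)\bigl[x,S(\bar z)^{-1}u\bigr]=\bigl\langle\hat L_z(A-\nu)^{1/2}x,(A-\nu)^{1/2}S(\bar z)^{-1}u\bigr\rangle$, which yields $\|x\|\le C\|\hat L_z(A-\nu)^{1/2}x\|$. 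That estimate is exactly what lets you pass from $\hat L_z\hat x_n\to0$ to $x_n\to0$, and it is, in spirit, the same boundedness fact about $B^*S(\bar z)^{-1}$ that the paper isolates. Until you supply such an argument (or equivalently prove the claimed equivalence via the Gelfand-triple characterisation of form-associated operators, using that the form norm of $\frs(z)$ is equivalent to that of $\fra$ by Lemma~\ref{forms}), the proof has a real gap at its central step. Once filled, your route is a valid and slightly more abstract alternative to the paper's concrete resolvent-formula argument.
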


\begin{proof}
First we show \eqref{nul_eq}.
Let $z\in\UU$ and $(x,y)^T\in\ker(\cM-z)$.  It follows from Lemma~\ref{le:Sz_T_z}\,(ii)
that $x\in\ker(S(z))$.  Hence $\nul(\cM-z)\le\nul(S(z))$.

Now let $x\in\ker(S(z))$.  Lemma~\ref{le:Sz_T_z}\,(i) implies that
\[
  \begin{pmatrix} x \\[0.5ex] (D-z)^{-1}B^*x \end{pmatrix} \in \ker(\cM-z).
\]
Therefore $\nul(S(z))\le\nul(\cM-z)$, and \eqref{nul_eq} is proved.
From this we also obtain the second relation in \eqref{sigmaST}.

It remains to show the first relation in \eqref{sigmaST}.
Let $z\in\rho(\cM)\cap\mathbb{U}$ and $u\in\mathcal{H}_1$.  Then there
exists an $(x,y)^T\in\dom(\cM)$ with
\[
  (\cM - z)\binom{x}{y} = \binom{u}{0}.
\]
It follows from Lemma~\ref{le:Sz_T_z}\,(ii) that $x\in\dom(S(z))$ and $S(z)x=u$.
Hence $S(z)$ is surjective.  By the already proved relation in \eqref{nul_eq}
we obtain that $z\in\rho(S(z))$.  Hence $\sigma(S)\subset\sigma(\cM)\cap\UU$.

Now let $z\in\rho(S)$.  Then $\cM-z$ is injective and therefore has an inverse.
A direct calculation establishes that this inverse,
restricted to $\mathcal{H}_1\times\dom(B(D-z)^{-1})$, is given by
\begin{equation}\label{invrep}
  (\cM - z)^{-1} = \begin{pmatrix}
    S(z)^{-1}& -S(z)^{-1}B(D - I)^{-1} \\[1ex]
    F(z) & (D - z)^{-1} - F(z)B(D - I)^{-1}
  \end{pmatrix}
\end{equation}
where $F(z) \defeq (D - z)^{-1}B^*S(z)^{-1} $.
The set $\mathcal{H}_1\times\dom(B(D-z)^{-1})$
is dense in $\mathcal{H}_1\times\mathcal{H}_2$:
if Assumption~\ref{assump}.(I) is satisfied, this follows from the fact
that $\dom(B)$ is a core for $D$;
if Assumption~\ref{assump}.(II) is satisfied, then $\dom(B(D-z)^{-1})=\cH_2$.
It therefore suffices to show that the operator on the right-hand side
of \eqref{invrep} is bounded.  We suppose the contrary.
Then there exists a sequence
\begin{equation}\label{opeq}
  \binom{x_n}{y_n}\in\dom(\cM) \qquad\text{with}\quad
  (\cM-z)\binom{x_n}{y_n}
  \eqdef \binom{u_n}{v_n} \to 0,
\end{equation}
$\|x_n\|^2 + \|y_n\|^2 = 1$, $v_n\in \dom(B(D-z)^{-1})$ and hence
\begin{equation}\label{reseq}
  \binom{x_n}{y_n} = \begin{pmatrix}
    S(z)^{-1}u_n - S(z)^{-1}B(D-z)^{-1}v_n \\[1ex]
    F(z)u_n + (D-z)^{-1}v_n - F(z)B(D-z)^{-1}v_n
  \end{pmatrix}.
\end{equation}
From \cite[Theorem~VI.2.5]{katopert} we have $S(z)^* = S(\overline{z})$ and
therefore $\overline{z}\in\rho(S)$.
Further, $\dom(S(\overline{z}))\subset\dom(\fra)\subset\dom(B^*)$
and hence the operator $B^*S(\overline{z})^{-1}$ is bounded.
Since $(S(z)^{-1}B)^*= B^*S(\overline{z})^{-1}$, it follows that $S(z)^{-1}B$ is bounded.
Then, using \eqref{reseq}, we deduce that $x_n\to 0$, which is a contradiction
to Lemma~\ref{le:approx_es}\,(iii).
Hence $\sigma(\cM)\cap\mathbb{U}\subset\sigma(S)$.
\end{proof}

In Theorem~\ref{th:sess_equiv} below we show the equivalence of essential spectra
of $S$ and $\cM$ in a certain interval.

In the next proposition we consider the situation where we can
describe the essential spectrum of $\cM$.

\begin{proposition}\label{pr:sessALMS}
Suppose that Assumption~{\rm\ref{assump}.(I)} is satisfied and that $A$ has
compact resolvent.  Then
\begin{equation}\label{sessALMS}
  \sigmaess(\cM) = \sigmaess\bigl(D+\ov{B^*(A-\nu)^{-1}B}\bigr)
  \subset \bigl[\inf\sigmaess(D),\sup\sigmaess(D)+b_0\bigr]
\end{equation}
for any $\nu<\min\sigma(A)$.
\end{proposition}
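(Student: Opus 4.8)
The strategy is to reduce everything to the Schur complement and then invoke the spectral‐equivalence and essential‐spectrum machinery already available. Since $A$ has compact resolvent, $\min\sigma(A)=\alpha_-$ is isolated, and one should first observe that $b_0$ governs the geometry: in the present upper–dominant situation, Remark~\ref{re:comp1} need not apply, but what matters is that the interval $[\inf\sigmaess(D),\sup\sigmaess(D)+b_0]$ lies inside $\UU^{\,c}$'s complement appropriately, so that outside it we are on $\UU$ and Theorem~\ref{specequiv} gives $\sigmaess(\cM)\setminus(\text{that interval}) = \sigmaess(S)\setminus(\text{that interval})$ (using the forthcoming Theorem~\ref{th:sess_equiv}, which I am allowed to cite). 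The first real step, then, is: on $\UU$, identify $S(z)$ modulo compact perturbation. Because $A$ has compact resolvent, $(A-\nu)^{-1}$ is compact, hence $\overline{(A-\nu)^{-1}B}$ is compact (it is $(A-\nu)^{-1/2}$ — compact — times the bounded operator $\overline{(A-\nu)^{-1/2}B}$, by \eqref{rel_ABstar2}); similarly $B^*(A-\nu)^{-1}$ and $\overline{B^*(A-\nu)^{-1}B}$ are compact.

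**The key manipulation.** The plan is to show $\sigmaess(S) = \sigmaess\bigl(D + \overline{B^*(A-\nu)^{-1}B} - (\,\cdot\,)\bigr)$ in the sense of the operator function on the right, i.e. that $0\in\sigmaess(S(z))$ iff $z\in\sigmaess(D+\overline{B^*(A-\nu)^{-1}B})$. For this one writes $S(z) = A - z + B(D-z)^{-1}B^*$ and factors: formally $S(z) = (A-z)^{1/2}\bigl(I + (A-z)^{-1/2}B(D-z)^{-1}B^*(A-z)^{-1/2}\bigr)(A-z)^{1/2}$, but since $A-z$ is not positive for $z$ of interest one should instead keep the fixed base point $\nu$ and write $S(z) = (A-\nu)^{1/2}\bigl(I + \overline{(A-\nu)^{-1/2}B}(D-z)^{-1}\overline{B^*(A-\nu)^{-1/2}} + (\nu-z)(A-\nu)^{-1}\bigr)(A-\nu)^{1/2}$ as in Proposition~\ref{schurdom}. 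The middle factor is $I$ plus a compact operator (both correction terms are compact: the first because $(D-z)^{-1}$ is bounded and it is sandwiched between the two compact operators $\overline{(A-\nu)^{-1/2}B}$'s; the second because $(A-\nu)^{-1}$ is compact). Hence $S(z)$ is Fredholm of index zero whenever $(A-\nu)^{1/2}$ is (which it always is, being boundedly invertible as $\nu<\alpha_-$), i.e. $0\notin\sigmaess(S(z))$ for every $z\in\UU$. Symmetrically, $D+\overline{B^*(A-\nu)^{-1}B}$ differs from $D$ by a compact (in fact bounded self-adjoint, with $0\le \overline{B^*(A-\nu)^{-1}B}\le$ something governed by $a,b$) perturbation, so its essential spectrum equals $\sigmaess(D)$.

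**Pinning down the enclosure.** So $\sigmaess(S)=\emptyset$, and by Theorem~\ref{th:sess_equiv} (the equivalence of essential spectra of $S$ and $\cM$ in the relevant interval) the essential spectrum of $\cM$ can only sit in the complement of $\UU$, i.e. in $\{z:\dist(z,\sigma(D))\le b_0\}$; intersecting with the real axis (the essential spectrum of $\cM$, like all of $\sigma(\cM)$ inside the real axis, is real here — or one argues directly via approximate eigensequences and Lemma~\ref{le:approx_es}) this is $\bigcup_{\lambda\in\sigma(D)}[\lambda-b_0,\lambda+b_0]$. Then one sharpens: since $\sigma(D)\setminus\sigmaess(D)$ consists of isolated eigenvalues which, being isolated, cannot contribute to $\sigmaess(\cM)$ (another compactness / Weyl-sequence argument, using that an approximate eigensequence of $\cM$ at such a point would, via Lemma~\ref{le:approx_es}(ii) and the compactness of $\overline{(A-\nu)^{-1}B}$, have to converge, forcing the point into $\sigmap$ not $\sigmaess$), one may replace $\sigma(D)$ by $\sigmaess(D)$; and on $\sigmaess(D)$ one uses the non-negativity $\overline{B^*(A-\nu)^{-1}B}\ge0$ together with $a+b\alpha_-\ge0$ (Remark~\ref{re:abalmi}) to see the true range is the one-sided band $[\inf\sigmaess(D),\sup\sigmaess(D)+b_0]$ rather than the symmetric $\pm b_0$ band. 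Concretely, the equality $\sigmaess(\cM)=\sigmaess\bigl(D+\overline{B^*(A-\nu)^{-1}B}\bigr)$ is obtained by combining $\sigmaess(S)=\emptyset$ with a direct decomposition argument on $\cM-z$ using the block factorisation and compactness of the off-diagonal and $(A-\nu)^{-1}$ terms, and the inclusion then follows from $\essinf\bigl(D+\overline{B^*(A-\nu)^{-1}B}\bigr)\ge \essinf D$ and $\esssup\bigl(D+\overline{B^*(A-\nu)^{-1}B}\bigr)\le \esssup D + \|\overline{B^*(A-\nu)^{-1}B}\|\le \esssup D + b_0$, the last bound being exactly the content of \eqref{domcons} optimised over $b\searrow b_0$.

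**Main obstacle.** The technically delicate point is the first equality in \eqref{sessALMS}, $\sigmaess(\cM)=\sigmaess\bigl(D+\overline{B^*(A-\nu)^{-1}B}\bigr)$: one must show that the compact off-diagonal coupling genuinely drops out of the essential spectrum. The clean way is to exhibit, for $z$ outside $\sigmaess(D+\overline{B^*(A-\nu)^{-1}B})$, a parametrix for $\cM-z$ built from $S(z)^{-1}$ (which exists modulo finite rank since $S(z)$ is Fredholm of index $0$ by the factorisation above) via the representation \eqref{invrep}, checking that the error terms are compact because they all contain a factor of $\overline{(A-\nu)^{-1}B}$ or $(A-\nu)^{-1}$; and conversely for $z\in\sigmaess(D+\overline{B^*(A-\nu)^{-1}B})$, lifting a singular Weyl sequence for $D+\overline{B^*(A-\nu)^{-1}B}$ to one for $\cM$ via $x_n\mapsto(x_n,(D-z)^{-1}B^*x_n)^T$ and using Lemma~\ref{le:approx_es} (in particular item (iii), which prevents $x_n\to0$) to guarantee it stays singular. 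Handling the case where $z$ is near $\sigma(D)$ but not in $\UU$ — where $S$ is not even defined — is where most of the care goes, and this is presumably precisely why the cleaner "Theorem~\ref{th:sess_equiv}" is quoted rather than re-proved here.
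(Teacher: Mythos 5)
There is a genuine gap in your argument, and the key point is a false compactness claim.

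You assert that $\overline{B^*(A-\nu)^{-1}B}$ (and also $\overline{(A-\nu)^{-1/2}B}$) is compact, and you use this both to conclude that $D+\overline{B^*(A-\nu)^{-1}B}$ has the same essential spectrum as $D$ and to justify the ``$I$ plus compact'' factorisation of $S(z)$. But under the stated hypotheses this is not true. By \eqref{rel_ABstar2},
\[
  \overline{B^*(A-\nu)^{-1}B}=B^*(A-\nu)^{-1/2}\cdot\overline{(A-\nu)^{-1/2}B},
\]
a product of two bounded operators, each adjoint to the other, with no free compact factor in between; the compact operator $(A-\nu)^{-1/2}$ is already fully absorbed by the unbounded $B^*$ and $B$. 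What \emph{is} compact is $\overline{(A-\nu)^{-1}B}=(A-\nu)^{-1/2}\overline{(A-\nu)^{-1/2}B}$ and $\overline{B^*(A-\nu)^{-2}B}$, because there a naked $(A-\nu)^{-1/2}$ or $(A-\nu)^{-1}$ survives. Compactness of $\overline{B^*(A-\nu)^{-1}B}$ would require the extra assumption that $B^*(A-\nu)^{-1/2}$ is compact, and that is precisely the content of Remark~\ref{re:comp2}, which then gives $\sigmaess(\cM)=\sigmaess(D)$. The very fact that the proposition states $\sigmaess(\cM)=\sigmaess\bigl(D+\overline{B^*(A-\nu)^{-1}B}\bigr)$ and \emph{not} $\sigmaess(\cM)=\sigmaess(D)$ is the signal that $\overline{B^*(A-\nu)^{-1}B}$ is not a compact perturbation of zero.

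Consequently your route to the first equality does not close. You need that equality to hold also for $z$ outside $\UU$, where the Schur complement is undefined and Theorem~\ref{th:sess_equiv} gives you nothing; you acknowledge this is ``where most of the care goes'' but never actually do it. The paper sidesteps this entirely by verifying the hypotheses of \cite[Theorem~2.2]{ALMS94}, which delivers the reduction to the \emph{second} Schur complement $D+\overline{B^*(A-\nu)^{-1}B}$ directly, with no case analysis on $z$. That is the step your proof is missing; a parametrix built from $S(z)^{-1}$ cannot substitute for it near $\sigma(D)$. Your enclosure argument also skips a step: for a fixed $\nu$ one only has $\|\overline{B^*(A-\nu)^{-1}B}\|\le a\|(A-\nu)^{-1}\|+b$, which can exceed $b_0$. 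The paper first observes that $\sigmaess\bigl(D+\overline{B^*(A-\nu)^{-1}B}\bigr)$ is independent of $\nu$ (the difference for two choices of $\nu$ \emph{is} compact, since it contains a loose resolvent factor), then estimates $\liminf_{\nu\to-\infty}\|B^*(A-\nu)^{-1/2}\|^2\le b$ using \eqref{domcons}, and finally optimises over $b>b_0$; your proof mentions ``optimised over $b\searrow b_0$'' but omits the $\nu$-independence and the limit $\nu\to-\infty$ that make that optimisation legitimate.
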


\begin{proof}
Since
\[
  \ov{B^*(A-\nu)^{-1}B} = B^*(A-\nu)^{-\frac12}\ov{(A-\nu)^{-\frac12}B}
\]
is bounded by \eqref{rel_ABstar2} and its preceding paragraph and since
\[
  \ov{B^*(A-\nu)^{-2}B} = B^*(A-\nu)^{-\frac12}(A-\nu)^{-1}\ov{(A-\nu)^{-\frac12}B}
\]
is compact, it follows that all assumptions of \cite[Theorem~2.2]{ALMS94} are satisfied.
The latter yields the first equality in \eqref{sessALMS}.
Note that the essential spectrum of $D+\ov{B^*(A-\nu)^{-1}B}$ is independent of $\nu$
since differences of these operators for different $\nu$ are compact.

To show the inclusion in \eqref{sessALMS}, let $a\in\RR$ and $b\ge0$ be any pair
of numbers such that \eqref{domcons} holds.
Since $\ov{B^*(A-\nu)^{-1}B}\ge0$, we have
\begin{equation}\label{sessALMS2}
  \sigmaess\bigl(D+\ov{B^*(A-\nu)^{-1}B}\bigr)
  \subset \bigl[\inf\sigmaess(D),\sup\sigmaess(D)+\bigl\|\ov{B^*(A-\nu)^{-1}B}\bigr\|\,\bigr]
\end{equation}
for any $\nu<\min\sigma(A)$.
Moreover, if $\nu<0$, $\nu<\min\sigma(A)$ and $x\in\cH_1$,
we obtain from \eqref{domcons} that
\begin{align*}
  \bigl\|B^*(A-\nu)^{-\frac12}x\bigr\|^2
  &\le a\bigl\|(A-\nu)^{-\frac12}x\bigr\|^2 + b\fra\bigl[(A-\nu)^{-\frac12}x\bigr]
  \\[0.5ex]
  &\le a\bigl\|(A-\nu)^{-\frac12}x\bigr\|^2 + b(\fra-\nu)\bigl[(A-\nu)^{-\frac12}x\bigr]
  \\[0.5ex]
  &= a\bigl\|(A-\nu)^{-\frac12}x\bigr\|^2 + b\|x\|^2.
\end{align*}
This implies that
\[
  \liminf_{\nu\to-\infty}\, \bigl\|\ov{B^*(A-\nu)^{-1}B}\bigr\|
  = \liminf_{\nu\to-\infty}\, \bigl\|B^*(A-\nu)^{-\frac12}\bigr\|^2 \le b.
\]
If we take the infimum over all $b>b_0$ and combine this relation with \eqref{sessALMS2},
we obtain the inclusion in \eqref{sessALMS}.
\end{proof}

\begin{remark}\label{re:comp2}
If, in addition to the assumptions of Proposition~\ref{pr:sessALMS},
the operator $B^*(A-\nu)^{-\frac12}$ is compact for some $\nu<\min\sigma(A)$,
then $\ov{B^*(A-\nu)^{-1}B}$ is compact as well, and
hence $\sigmaess(\cM)=\sigmaess(D)$.
\end{remark}

\section{The quadratic numerical range}\label{numrange_sec}

\noindent
The quadratic numerical range of a block operator matrix is a very useful tool
for proving spectral enclosures, it uses the block structure of the operator, and
the enclosures are tighter than those obtained from the numerical range.
It was introduced in \cite{LT98} and later studied in various paper; see, e.g.\
\cite{LMMT01,LLT02,LLMT05,T09,tretbook}.

\begin{definition}\label{def:W2}
Suppose that Assumption~{\rm\ref{assump}} is satisfied and let $\cM$ be the operator
as in Theorem~{\rm\ref{jsaness}}.
The \emph{quadratic numerical range} of $\cM$, denoted by $W^2(\cM)$, is defined as
the set of eigenvalues of all $2\!\times\!2$-matrices
\[
  \cM_{x,y} \defeq \begin{pmatrix}
    \dfrac{\fra[x]}{\|x\|^2} & \dfrac{\langle y,B^*x\rangle}{\|x\|\,\|y\|} \\[3ex]
    -\dfrac{\langle B^*x,y \rangle}{\|x\|\,\|y\|} & \dfrac{\frd[y]}{\|y\|^2}
  \end{pmatrix}, \qquad
  x\in\dom(\fra)\backslash\{0\},\;\; y\in\dom(\frd)\backslash\{0\},
\]
i.e.\
\[
  W^2(\cM) \defeq \bigl\{z\in\CC:\exists\,x\in\dom(\fra)\backslash\{0\},\,
  y\in\dom(\frd)\backslash\{0\} \;\text{such that}\; z\in\sigma(\cM_{x,y})\bigr\}.
\]
The eigenvalues of $\cM_{x,y}$ are
\[
  \lambda_\pm\binom{x}{y}
  \defeq \frac{1}{2}\Bigg(\frac{\fra[x]}{\|x\|^2} + \frac{\frd[y]}{\|y\|^2}
  \pm\sqrt{\bigg(\frac{\fra[x]}{\|x\|^2} - \frac{\frd[y]}{\|y\|^2}\bigg)^2 - 4\frac{|\langle y,B^*x\rangle|^2}{\|x\|^2\,\|y\|^2}}~\Bigg).
\]
\end{definition}

\pagebreak[3]

\begin{remark}
\rule{0ex}{1ex}
\begin{myenum}
\item
Note that our definition differs slightly from that in \cite{tretbook},
where $x$ and $y$ vary only in $\dom(A)$ and $\dom(D)$, respectively.
However, in order to have $\lambda_\pm\binom{x}{y}$ defined for all $(x,y)^T\in\dom(\cM)$
with $x,y\ne0$, we chose the larger sets $\dom(\fra)$ and $\dom(\frd)$.
These sets were also used in \cite{KLT04} for self-adjoint block operator matrices.
\item
It is easy to see that $W^2(\cM)$ is symmetric with respect to the real axis
and it consists of at most two connected components.
It follows in the same way as in \cite[Proposition~2.3]{LLMT05} that
if $\dim\cH>2$ and $W^2(\cM)$ contains at least one non-real point, then
$W^2(\cM)$ is connected.
\end{myenum}
\end{remark}

\smallskip

\noindent
We shall often use the following notation.
Let $x\in\dom(\fra)\backslash\{0\}$ and $y\in\dom(\frd)\backslash\{0\}$ and set
\begin{equation}\label{alphabetadelta}
  \alpha \defeq \frac{\fra[x]}{\|x\|^2}\,, \qquad
  \beta \defeq \frac{\langle y,B^*x\rangle}{\|x\|\,\|y\|}\,, \qquad
  \delta \defeq \frac{\frd[y]}{\|y\|^2}\,.
\end{equation}
Then
\begin{equation}\label{lambdapmalpha}
  \lambda_\pm\binom{x}{y}
  = \frac{\alpha+\delta}{2}\pm\sqrt{\biggl(\frac{\alpha-\delta}{2}\biggr)^2-|\beta|^2\,}.
\end{equation}
It follows from \eqref{domcons} that
\begin{equation}\label{estbeta}
  |\beta|^2 = \frac{|\langle y,B^*x\rangle|^2}{\|x\|^2\|y\|^2}
  \le \frac{\|B^*x\|^2}{\|x\|^2} \le \frac{b\fra[x]+a\|x\|^2}{\|x\|^2}
  = b\alpha+a.
\end{equation}

\bigskip

First we show that $W^2(\cM)$ contains the eigenvalues of $\cM$.

\begin{lemma}\label{qnrps}
Suppose that Assumption~{\rm\ref{assump}} is satisfied.
Then $\sigmap(\cM)\subset W^2(\cM)$.
\end{lemma}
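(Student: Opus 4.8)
The plan is to take an eigenvalue $\lambda\in\sigmap(\cM)$ with a corresponding eigenvector $(x,y)^T\in\dom(\cM)\setminus\{0\}$, so that $\cM\binom{x}{y}=\lambda\binom{x}{y}$, and to show that $\lambda$ is an eigenvalue of the $2\times2$ matrix $\cM_{x,y}$, provided that both components $x$ and $y$ are non-zero; the degenerate cases $x=0$ or $y=0$ must be handled separately. First I would record that by Theorem~\ref{jsaness} we have $x\in\dom(\fra)$, and clearly $y\in\dom(D)\subset\dom(\frd)$, so the matrix $\cM_{x,y}$ is well defined as soon as $x\neq0$ and $y\neq0$. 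The eigenvalue equation, read componentwise via \eqref{form1} and \eqref{form2} (with $(u,v)^T=\lambda(x,y)^T$), gives
\begin{align*}
  \fra[x] + \langle y,B^*x\rangle &= \lambda\|x\|^2, \\
  -\langle B^*x,y\rangle + \frd[y] &= \lambda\|y\|^2.
\end{align*}
Dividing the first equation by $\|x\|^2$ and the second by $\|y\|^2$, and using the abbreviations \eqref{alphabetadelta}, I would like to conclude that the vector $(\|x\|,\|y\|)^T$ (or rather a suitably normalised version together with a phase factor) is an eigenvector of $\cM_{x,y}$ with eigenvalue $\lambda$.

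The slight subtlety is matching the off-diagonal entries: the first scalar equation reads $\alpha + \frac{\langle y,B^*x\rangle}{\|x\|^2} = \lambda$, and I want this in the form $\alpha\,\|x\| + \beta\,\|y\| = \lambda\,\|x\|$ with $\beta = \frac{\langle y,B^*x\rangle}{\|x\|\,\|y\|}$, which works out after multiplying by $\|x\|$; similarly the second equation becomes $-\ov{\beta}\,\|x\| + \delta\,\|y\| = \lambda\,\|y\|$ after multiplying by $\|y\|$ and noting $\langle B^*x,y\rangle = \ov{\langle y,B^*x\rangle}$. Thus $(\|x\|,\|y\|)^T$ is a genuine eigenvector of $\cM_{x,y}=\left(\begin{smallmatrix}\alpha & \beta\\ -\ov\beta & \delta\end{smallmatrix}\right)$ for the eigenvalue $\lambda$, hence $\lambda\in\sigma(\cM_{x,y})\subset W^2(\cM)$.

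It remains to dispose of the cases where one component vanishes. If $y=0$ but $x\neq0$, then the eigenvalue equation forces $B^*x=0$ (from the second component of \eqref{top2}, $-B^*x=0$) and $\fra[x]=\lambda\|x\|^2$, i.e.\ $\lambda=\alpha$; picking any $y'\in\dom(\frd)\setminus\{0\}$, the matrix $\cM_{x,y'}$ has $\beta=\frac{\langle y',B^*x\rangle}{\|x\|\,\|y'\|}=0$, so it is upper triangular with diagonal $\alpha,\delta'$, and $\lambda=\alpha\in\sigma(\cM_{x,y'})\subset W^2(\cM)$. Symmetrically, if $x=0$ but $y\neq0$, then $By=0$ and $\frd[y]=\lambda\|y\|^2$, so $\lambda=\delta$; one needs $\langle y,B^*x'\rangle = \langle By,x'\rangle = 0$ for any $x'\in\dom(\fra)\setminus\{0\}$, which holds since $By=0$, and again $\lambda\in\sigma(\cM_{x',y})\subset W^2(\cM)$. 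The case $x=y=0$ does not occur since an eigenvector is nonzero. I expect the main (minor) obstacle to be precisely this bookkeeping in the degenerate cases — in particular making sure that $By=0$ is legitimately available from $(x,y)^T\in\dom(\cM)$ with $x=0$, which follows from \eqref{top1}–\eqref{top2} since then $\ov{(A-\nu)^{-1}B}y\in\dom(A)$ and the first component of $(\cM-\lambda)\binom{0}{y}$ is $(A-\nu)\ov{(A-\nu)^{-1}B}y = \ov{B}\,y$ (interpreted via \eqref{rel_ABstar2}), forcing it to be zero; alternatively one argues directly from \eqref{connection_T_form} with $\hat y=0$ that $\langle y,B^*\hat x\rangle=0$ for all $\hat x\in\dom(\fra)$, which is exactly what is needed and sidesteps the domain issue entirely.
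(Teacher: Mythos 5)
Your proposal is correct and follows essentially the same route as the paper's proof: extract the two scalar identities from \eqref{form1}--\eqref{form2} applied to the eigenvector, observe that $(\|x\|,\|y\|)^T$ is then an eigenvector of $\cM_{x,y}$ with eigenvalue $\lambda$ when $x,y\ne0$, and treat the degenerate cases $x=0$, $y=0$ separately by replacing the vanishing component with an arbitrary nonzero vector. Two small remarks: in the $y=0$ case the matrix $\cM_{x,y'}$ is in fact diagonal (not merely upper triangular), since $B^*x=0$ kills both off-diagonal entries; and for the $x=0$ case, your "$By=0$" is not literally meaningful since $y$ need not lie in $\dom(B)$ under Assumption~(I), but your fallback — applying \eqref{connection_T_form} with $\hat y=0$ to get $\langle y,B^*\hat x\rangle=0$ for all $\hat x\in\dom(\fra)$ — is exactly what is needed and is in fact a mild streamlining of the paper's version, which instead derives $\ov{(A-\nu)^{-\frac12}B}\,y=0$ from \eqref{peak1} and then unwinds $\langle y,B^*u\rangle$ through $(A-\nu)^{-\frac12}$.
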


\begin{proof}
Let $z\in\sigmap(\cM)$.  Then there exists a non-zero vector $(x,y)^T\in\ker(\cM-z)$, i.e.\
\begin{align}
  (A-\nu)\bigl(x+\ov{(A-\nu)^{-1}B}y\bigr) + (\nu-z)x &= 0,
  \label{peak1} \\[0.5ex]
  -B^*x + (D-z)y &= 0.
  \label{peak2}
\end{align}
It follows from \eqref{form1} and \eqref{form2} that
\begin{align}
  \fra[x] - z\|x\|^2 + \langle y,B^*x\rangle &= 0,
  \label{peak3} \\[0.5ex]
  -\langle B^*x,y\rangle + \frd[y] - z\|y\|^2 &= 0.
  \label{peak4}
\end{align}
Let us first consider the case when $x=0$.  Then $\frd[y]=z\|y\|^2$ by \eqref{peak4}.
Moreover, \eqref{peak1} implies that $\ov{(A-\nu)^{-1}B}y=0$ and
hence $\ov{(A-\nu)^{-\frac{1}{2}}B}y=0$ by \eqref{rel_ABstar2}.
For any $u\in\dom(\fra)\backslash\{0\}$ we have
\[
  \langle y,B^*u\rangle
  = \bigl\langle y,B^*(A-\nu)^{-\frac{1}{2}}(A-\nu)^{\frac{1}{2}}u\bigr\rangle
  = \bigl\langle \ov{(A-\nu)^{-\frac{1}{2}}B}y,(A-\nu)^{\frac{1}{2}}u\bigr\rangle = 0
\]
and hence
\[
  \cM_{u,y}
  = \begin{pmatrix}
    \dfrac{\fra[u]}{\|u\|^2} & 0 \\[3ex]
    0 & z
  \end{pmatrix},
\]
which shows that $z\in\sigma(\cM_{u,y})\subset W^2(\cM)$.

Next suppose that $y=0$. Then $x\ne 0$, $B^*x=0$ and $(A-z)x=0$
by \eqref{peak1} and \eqref{peak2}.
For any $v\in\dom(\frd)\backslash\{0\}$ we have
\begin{align*}
  \cM_{x,v} = \begin{pmatrix}
    z & 0 \\[1ex]
    0 & \dfrac{\frd[v]}{\|v\|^2}
  \end{pmatrix},
\end{align*}
which yields $z\in\sigma(\cM_{x,v})\subset W^2(\cM)$.

Finally, we assume that $x\ne 0$ and $y\ne 0$.  Then \eqref{peak3} and \eqref{peak4}
imply that
\begin{align*}
  \bigl(\cM_{x,y}-z\bigr) \begin{pmatrix} \|x\| \\[1ex] \|y\| \end{pmatrix}
  &= \begin{pmatrix}
    \dfrac{\fra[x]}{\|x\|^2}-z & \dfrac{\langle y,B^*x\rangle}{\|x\|\,\|y\|} \\[3ex]
    -\dfrac{\langle B^*x,y\rangle}{\|x\|\,\|y\|} & \dfrac{\frd[y]}{\|y\|^2}-z
  \end{pmatrix}
  \begin{pmatrix} \|x\| \\[1ex] \|y\| \end{pmatrix} \\[2ex]
  &= \begin{pmatrix}
    \dfrac{\fra[x]}{\|x\|} - z\|x\| + \dfrac{\langle y,B^*x\rangle}{\|x\|} \\[3ex]
    -\dfrac{\langle B^*x,y\rangle}{\|y\|} + \dfrac{\frd[y]}{\|y\|} - z\|y\|
  \end{pmatrix}
  = 0,
\end{align*}
which shows that $z\in\sigma(\cM_{x,y})\subset W^2(\cM)$.
\end{proof}

The next lemma is shown in the same way as \cite[Proposition~3.3]{T09}.

\begin{lemma}\label{le:qnrnr}
If\, $\dim\cH_1\ge 2$, then $W(D)\subset W^2(\cM)$.
If\, $\dim\cH_2\ge 2$, then $W(A)\subset W^2(\cM)$.
\end{lemma}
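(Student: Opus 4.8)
The plan is to exploit the elementary fact that the $2\times2$ matrix $\cM_{x,y}$ becomes diagonal precisely when its off-diagonal entry $\langle y,B^*x\rangle$ vanishes, and then to use the dimension hypotheses to produce, for a prescribed diagonal entry, a second vector making the off-diagonal entry zero. Concretely, let $x\in\dom(\fra)\backslash\{0\}$ and $y\in\dom(\frd)\backslash\{0\}$ with $\langle y,B^*x\rangle=0$ (note $B^*x$ is defined since $\dom(\fra)\subset\dom(B^*)$). Then $\beta=0$ in the notation \eqref{alphabetadelta}, so $\cM_{x,y}=\operatorname{diag}\bigl(\fra[x]/\|x\|^2,\ \frd[y]/\|y\|^2\bigr)$; since $\fra[x]$ and $\frd[y]$ are real, \eqref{lambdapmalpha} gives that the eigenvalues of $\cM_{x,y}$ are exactly $\fra[x]/\|x\|^2$ and $\frd[y]/\|y\|^2$. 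Hence both of these numbers lie in $\sigma(\cM_{x,y})\subset W^2(\cM)$.

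For the first assertion, let $\delta_0\in W(D)$, say $\delta_0=\langle Dy,y\rangle=\frd[y]$ with $y\in\dom(D)\subset\dom(\frd)$ and $\|y\|=1$. Consider the linear functional $x\mapsto\langle B^*x,y\rangle$ on $\dom(\fra)$. Since $\dom(\fra)$ is dense in $\cH_1$, its dimension is at least $\dim\cH_1\ge2$, so this functional has nontrivial kernel: there exists $x\in\dom(\fra)\backslash\{0\}$ with $\langle B^*x,y\rangle=0$, hence $\langle y,B^*x\rangle=\overline{\langle B^*x,y\rangle}=0$. By the observation of the previous paragraph, $\delta_0=\frd[y]\in\sigma(\cM_{x,y})\subset W^2(\cM)$. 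As $\delta_0\in W(D)$ was arbitrary, $W(D)\subset W^2(\cM)$.

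The second assertion is proved by interchanging the roles of the two components: given $\alpha_0\in W(A)$, write $\alpha_0=\langle Ax,x\rangle=\fra[x]$ with $x\in\dom(A)\subset\dom(\fra)$ and $\|x\|=1$, and consider the (conjugate-)linear functional $y\mapsto\langle y,B^*x\rangle$ on $\dom(\frd)$; since $\dim\dom(\frd)\ge\dim\cH_2\ge2$ there is $y\in\dom(\frd)\backslash\{0\}$ in its kernel, and then $\alpha_0=\fra[x]\in\sigma(\cM_{x,y})\subset W^2(\cM)$. The only point requiring any care is the dimension bookkeeping — a linear functional on a vector space of dimension at least two necessarily has nonzero kernel — which is exactly where the hypotheses $\dim\cH_1\ge2$, respectively $\dim\cH_2\ge2$, are used; there is no analytic difficulty.
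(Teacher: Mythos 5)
Your proof is correct. The paper proves this lemma only by reference (``in the same way as \cite[Proposition~3.3]{T09}''), and the argument there is exactly the one you give: fix the component producing the desired numerical-range value, then use the dimension hypothesis on the other space to pick a nonzero vector annihilating the off-diagonal entry $\langle y,B^*x\rangle$, so that $\cM_{x,y}$ is diagonal and the desired value appears among its eigenvalues. One trivial quibble: whether the map $y\mapsto\langle y,B^*x\rangle$ is linear or conjugate-linear depends on the inner-product convention, but either way its kernel has codimension at most one, which is all that your argument uses, so this does not affect the proof.
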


In the following definition we introduce a set, $\BB$, in which
the quadratic numerical range and the spectrum of $\cM$ are contained,
as we shall show in Proposition~\ref{numrange} and Theorem~\ref{th:specinB} below.
Moreover, we introduce condition \condA{} under which $W^2(\cM)$ and $\sigma(\cM)$
are contained in $\RR$.
Some comments concerning these definitions are given in Remark~\ref{re:pos_gap};
see also Figure~\ref{fig1}, which shows the set $\BB$ when $D$ is bounded.

\begin{definition}\label{def:Betc}
Assume that Assumption~\ref{assump} is satisfied.
Let $a\in\RR$, $b\ge0$ such that \eqref{domcons} holds and let $\alpha_-$, $\delta_+$
as in \eqref{defalmidepl}.
Moreover, if $D$ is bounded, set $\delta_-\defeq\min\sigma(D)$;
otherwise set $\delta_-\defeq-\infty$.
\begin{myenum}
\item
We say that condition \condA{} is satisfied if
\begin{equation}\label{condA1}
  b\delta_++b^2+a \le 0 \qquad\text{and}\qquad b>0
\end{equation}
or
\begin{equation}\label{condA2}
  \frac{\alpha_--\delta_+}{2} \ge b+\sqrt{\bigl(b\delta_++b^2+a\bigr)_+\!}\;.
\end{equation}
\item
Set
\begin{align}
  \mu \defequ \delta_+ + b + \sqrt{\bigl(b\delta_++b^2+a\bigr)_+\!}\,;
  \label{defmu}
  \\[1ex]
  \xi_1 \defequ \alpha_--\max\biggl\{\frac{b}{2},\sqrt{b\alpha_-+a}\biggr\};
  \label{defxi1}\\[1ex]
  \xi_2 \defequ \begin{cases}
    \dfrac{\alpha_-+\delta_-}{2} & \text{if $D$ is bounded}, \\[2ex]
    -\infty & \text{otherwise};
  \end{cases}
  \label{defxi2}\\[1ex]
  \xi_- \defequ \max\{\xi_1,\xi_2\};
  \label{defximi}\displaybreak[0]\\[1ex]
  \mu_- \defequ \min\{\alpha_-,\delta_-\};
  \\[1ex]
  \mu_+ \defequ \begin{cases}
    -\dfrac{a}{b} & \text{if \eqref{condA1} is satisfied but \eqref{condA2} is not}, \\[2ex]
    \alpha_--b-\sqrt{\bigl(b\delta_++b^2+a\bigr)_+\!}
      & \text{if \eqref{condA2} is satisfied but \eqref{condA1} is not}, \\[2ex]
    \max\Bigl\{-\dfrac{a}{b},\,\alpha_--b\Bigr\}
      & \text{if \eqref{condA1} and \eqref{condA2} are satisfied}; \hspace*{-4ex}
  \end{cases}
  \label{defmupl}\\[1ex]
  \eta \defequ \sqrt{\biggl(b\delta_++b^2+a
  -\biggl(\Bigl(\frac{\alpha_--\delta_+}{2}-b\Bigr)_+\biggr)^{\!2\,}\biggr)_+}\,.
  \label{defeta}
\end{align}
\item
Define the sets
\begin{align}
  \BBnr &\defeq \Bigl\{z\in\CC: \xi_- \le \Re z \le \mu,\;
    |\Im z| \le \eta\Bigr\};
  \\[1ex]
  \BB & \defeq \begin{cases}
    [\mu_-,\mu]\cup[\mu_+,\infty) & \text{if \condA{} is satisfied and $D$ is bounded}, \\[1ex]
    (-\infty,\mu]\cup[\mu_+,\infty) & \text{if \condA{} is satisfied and $D$ is unbounded}, \\[1ex]
    [\mu_-,\infty)\cup \BBnr & \text{if \condA{} is not satisfied and $D$ is bounded}, \\[1ex]
    \RR\cup\BBnr & \text{if \condA{} is not satisfied and $D$ is unbounded}.
  \end{cases}
  \label{defB}
\end{align}
\end{myenum}
\end{definition}

\begin{figure}[h]
\setlength{\unitlength}{1mm}
\begin{picture}(60,20)(-25,-10)
\put(-28,10){(a)}
\put(-30,0){\vector(1,0){60}}
\put(0,0.1){\line(1,0){29}}
\put(0,-0.1){\line(1,0){29}}
\put(-25,0.1){\line(1,0){15}}
\put(-25,-0.1){\line(1,0){15}}
\put(-10,-0.5){\line(0,1){1}}
\put(-11,-3.7){$\mu$}
\put(0,-0.5){\line(0,1){1}}
\put(-1,-3.7){$\mu_+$}
\put(-25,-0.5){\line(0,1){1}}
\put(-26,-3.7){$\mu_-$}
\end{picture}
\hspace*{3mm}
\begin{picture}(60,20)(-25,-10)
\put(-28,10){(b)}
\put(-30,0){\vector(1,0){60}}
\put(-25,0.1){\line(1,0){54}}
\put(-25,-0.1){\line(1,0){54}}
\put(-15,-8){\line(0,1){16}}
\put(10,-8){\line(0,1){16}}
\put(-15,8){\line(1,0){25}}
\put(-15,-8){\line(1,0){25}}
\put(-15,4){\line(1,1){4}}
\put(-15,0){\line(1,1){8}}
\put(-15,-4){\line(1,1){12}}
\put(-15,-8){\line(1,1){16}}
\put(-11,-8){\line(1,1){16}}
\put(-7,-8){\line(1,1){16}}
\put(-3,-8){\line(1,1){13}}
\put(1,-8){\line(1,1){9}}
\put(5,-8){\line(1,1){5}}
\put(-25,-0.5){\line(0,1){1}}
\put(-26,-3.7){$\mu_-$}
\put(11,-3.7){$\mu$}
\put(-18.8,-3.7){$\xi_-$}
\put(11,4){$\eta$}
\end{picture}
\caption{The set $\BB$ when $D$ is bounded; (a) shows the case when \condA{} is satisfied;
(b) shows the case when \condA{} is not satisfied.}
\label{fig1}
\end{figure}
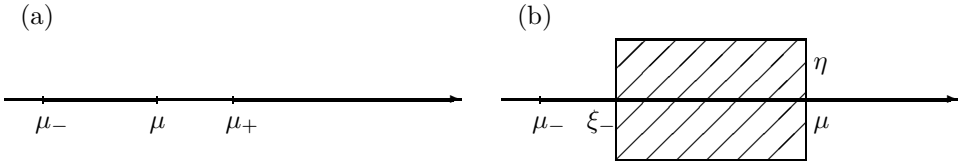

\pagebreak[3]

\begin{remark}\label{re:pos_gap}
\rule{0ex}{1ex}
\begin{myenum}
\item
If $B\ne0$, then $a\ne0$ or $b\ne0$ and therefore the right-hand side
of \eqref{condA2} is positive and $\mu>\delta_+$ (note that $a>0$ if $b=0$).
\item
If $B\ne0$ and the first inequality in \eqref{condA1} is satisfied, then
automatically $b>0$ (since $a>0$ if $b=0$).
\item
Assume that \condA{} is satisfied and that $B\ne0$.
Then
\begin{equation}\label{chain_inequ}
  \delta_+ < \mu \le \mu_+ \le \alpha_-.
\end{equation}
In particular, the spectra of $A$ and $D$ must be separated.
The inequalities in \eqref{chain_inequ} are true because of the following considerations.
If \eqref{condA1} holds, then $b>0$, and from (i), \eqref{condA1} and \eqref{abalmi},
we obtain
\[
  \delta_+ < \mu = \delta_++b \le -\frac{a}{b} \le \mu_+ \le \alpha_-.
\]
If \eqref{condA2} holds, then
\begin{equation}\label{chain_inequ2}
  \delta_+ < \mu \le \frac{\alpha_-+\delta_+}{2} \le \mu_+ \le \alpha_-.
\end{equation}
If \eqref{condA2} holds but \eqref{condA1} does not, then $\mu_+<\alpha_-$.
If the first inequality in \eqref{condA1} or the inequality in \eqref{condA2} is strict,
then $\mu<\mu_+$.  Moreover, if \eqref{condA2} is strict, then
\[
  \delta_+ < \mu < \frac{\alpha_-+\delta_+}{2} < \mu_+ \le \alpha_-.
\]
\item
If \eqref{condA1} is satisfied, it can happen that $\mu_+=\alpha_-$.
Consider, for instance the situation when $\alpha_-=0$ and \eqref{domcons}
holds with $a=0$ and $b>0$.  Then $\mu_+=0$.
On the other hand, if \eqref{condA1} is not satisfied but \eqref{condA2} is,
then always $\mu_+<\alpha_-$.
\item
The number $\mu_+$ can also be characterised as
\[
  \mu_+ = \max\bigl\{\mu_+^{(1)},\mu_+^{(2)}\bigr\}
\]
where
\begin{align*}
  \mu_+^{(1)} &\defeq \begin{cases}
    -\dfrac{a}{b} & \text{if \eqref{condA1} is satisfied}, \\[2ex]
    -\infty & \text{otherwise},
  \end{cases}
  \\[1ex]
  \mu_+^{(2)} &\defeq \begin{cases}
    \alpha_--b-\sqrt{\bigl(b\delta_++b^2+a\bigr)_+\!}
      & \text{if \eqref{condA2} is satisfied}, \\[1ex]
    -\infty & \text{otherwise}.
  \end{cases}
\end{align*}
\item
If $B$ is ``small'', i.e.\ $a$ and $b$ are small, then in general $\xi_1$ gives
the better lower bound for the real part of non-real elements from $\BB$.
If $D$ is bounded and $B$ is ``large'', then $\xi_2$ gives the better bound
as it is independent of $B$.
\item
It is elementary to see that $\eta=0$ if and only if \condA{} is satisfied;
moreover,
\[
  \eta = \begin{cases}
    0 & \text{if \condA{} is satisfied},
    \\[1ex]
    \displaystyle\sqrt{b\delta_++b^2+a}
    & \text{if \condA{} is not satisfied and}\; b\ge\dfrac{\alpha_--\delta_+}{2}\,,
    \\[1ex]
    \displaystyle\sqrt{b\alpha_-+a-\biggl(\dfrac{\alpha_--\delta_+}{2}\biggr)^2}
    & \text{if \condA{} is not satisfied and}\; b<\dfrac{\alpha_--\delta_+}{2}\,.
  \end{cases}
\]
\item
If $B$ is bounded, then one can choose $a=\|B\|^2$ and $b=0$, and hence
\begin{align}
  & \mu = \delta_++\|B\|, \qquad
  \xi_1 = \mu_+ = \alpha_--\|B\|,
  \label{muBbounded}\\[0.5ex]
  & \eta = \sqrt{\biggl(\|B\|^2
  -\biggl(\Bigl(\frac{\alpha_--\delta_+}{2}\Bigr)_+\biggr)^{\!2\,}\biggr)_+}\,.
  \label{etaBbounded}
\end{align}
\end{myenum}
\end{remark}

\bigskip

\noindent
Before we prove that $\BB$ contains $W^2(\cM)$ and $\sigma(\cM)$, we need some lemmas.

\begin{lemma}\label{le:nott}
Let $b\ge0$ and $a,t,\delta\in\RR$, and assume that
\begin{equation}\label{nott1}
  \biggl(\frac{t-\delta}{2}\biggr)^2 \le bt+a.
\end{equation}
Then
\begin{equation}\label{nott2}
  b\delta+b^2+a \ge 0 \qquad\text{and}\qquad
  \frac{t-\delta}{2} \le b+\sqrt{b\delta+b^2+a\,}.
\end{equation}
If strict inequality holds in \eqref{nott1}, then the inequalities
in \eqref{nott2} are also strict.
\end{lemma}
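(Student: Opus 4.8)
The plan is to recognise the hypothesis \eqref{nott1} as a completed square in disguise. Setting $s\defeq\frac{t-\delta}{2}$, so that $t=\delta+2s$, the right-hand side of \eqref{nott1} equals $b\delta+2bs+a$, and hence \eqref{nott1} is equivalent to $s^2\le b\delta+2bs+a$, i.e.\ to
\[
  (s-b)^2 \le b\delta+b^2+a .
\]

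From this everything follows at once. Since the left-hand side of the displayed inequality is non-negative, we obtain $b\delta+b^2+a\ge0$, which is the first assertion in \eqref{nott2}; in particular the square root $\sqrt{b\delta+b^2+a}$ is well defined. Taking square roots in the displayed inequality gives $\bigl|s-b\bigr|\le\sqrt{b\delta+b^2+a}$, so in particular $s-b\le\sqrt{b\delta+b^2+a}$, i.e.\ $\frac{t-\delta}{2}\le b+\sqrt{b\delta+b^2+a}$, which is the second assertion in \eqref{nott2}.

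For the strictness claim I would simply run the same computation with strict inequality throughout: if \eqref{nott1} holds with strict inequality, then $(s-b)^2<b\delta+b^2+a$, whence $b\delta+b^2+a>0$ and $s-b\le|s-b|<\sqrt{b\delta+b^2+a}$, so both inequalities in \eqref{nott2} become strict. There is essentially no obstacle here; the only point to watch is that completing the square produces a two-sided bound on $s-b=\frac{t-\delta}{2}-b$, of which the lemma retains only the (relevant) upper half.
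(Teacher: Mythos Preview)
Your proof is correct and takes essentially the same approach as the paper: both treat \eqref{nott1} as a quadratic inequality in $t$ (equivalently, in your $s=\frac{t-\delta}{2}$) and read off the non-negativity of the discriminant and the upper root. The paper writes it as $t^2-2(\delta+2b)t+\delta^2-4a\le0$ and computes the zeros $t_\pm=\delta+2b\pm2\sqrt{b\delta+b^2+a}$, whereas your substitution and completion of the square to $(s-b)^2\le b\delta+b^2+a$ is a slightly slicker packaging of the same computation.
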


\begin{proof}
Relation \eqref{nott1} is equivalent to
\[
  t^2-2(\delta+2b)t+\delta^2-4a \le 0.
\]
The zeros of the polynomial in $t$ on the left-hand side are
\[
  t_\pm \defeq \delta+2b\pm\sqrt{(\delta+2b)^2-\delta^2+4a}
  = \delta+2b\pm2\sqrt{b\delta+b^2+a\,}.
\]
If \eqref{nott1} is satisfied, then the discriminant is non-negative
and $t_-\le t\le t_+$, which yields \eqref{nott2}.
If the inequality in \eqref{nott1} is strict, then $t_-<t<t_+$ and hence
the discriminant is strictly positive.
\end{proof}

\begin{lemma}\label{le:lapllb}
Assume that \condA{} is satisfied and
let $x\in\dom(\fra)\backslash\{0\}$ and $y\in\dom(\frd)\backslash\{0\}$.
Then $\lambda_\pm\binom{x}{y}\in\RR$.
Moreover, if \eqref{condA1} holds, then
\begin{equation}\label{lapllb1}
  \lambda_+\binom{x}{y} \ge -\frac{a}{b}\,;
\end{equation}
if \eqref{condA2} holds, then
\begin{equation}\label{lapllb2}
  \lambda_+\binom{x}{y} \ge \alpha_--b-\sqrt{\bigl(b\delta_++b^2+a\bigr)_+\!}\,.
\end{equation}
\end{lemma}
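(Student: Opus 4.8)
The plan is to work with the quantities $\alpha=\fra[x]/\|x\|^2$, $\beta=\langle y,B^*x\rangle/(\|x\|\|y\|)$, $\delta=\frd[y]/\|y\|^2$ as in \eqref{alphabetadelta}, so that $\lambda_\pm\binom{x}{y}$ is given by \eqref{lambdapmalpha}. First I would observe that $\alpha\ge\alpha_-$ and $\delta\le\delta_+$ always hold, and that $|\beta|^2\le b\alpha+a$ by \eqref{estbeta}. The eigenvalues are real precisely when the discriminant $\bigl(\tfrac{\alpha-\delta}{2}\bigr)^2-|\beta|^2$ is non-negative, so it suffices to show $\bigl(\tfrac{\alpha-\delta}{2}\bigr)^2\ge b\alpha+a$ under condition \condA{}. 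If \eqref{condA1} holds, then $b\alpha+a\le b\alpha+a+b(\delta_+-\delta)+(b\delta_++b^2+a-b\delta_+-a)=\dots$; more directly, since $b>0$ and $b\delta_++b^2+a\le0$, one has $b\alpha+a\le b\alpha-b\delta_+-b^2=b\bigl(\alpha-\delta_+-b\bigr)$, and combining with $\delta\le\delta_+$ one should be able to bound this by $\bigl(\tfrac{\alpha-\delta}{2}\bigr)^2$ using a completion of squares (the estimate $b(\alpha-\delta_+-b)\le\bigl(\tfrac{\alpha-\delta_+}{2}\bigr)^2$ reduces to $(\tfrac{\alpha-\delta_+}{2}-b)^2\ge0$). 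If instead \eqref{condA2} holds, I would use $\tfrac{\alpha-\delta}{2}\ge\tfrac{\alpha_--\delta_+}{2}\ge b+\sqrt{(b\delta_++b^2+a)_+}$ and invoke the contrapositive of Lemma~\ref{le:nott}: if the discriminant were negative, i.e.\ $\bigl(\tfrac{\alpha-\delta}{2}\bigr)^2<b\alpha+a$, then \eqref{nott1} would give $\tfrac{\alpha-\delta}{2}<b+\sqrt{b\delta_++b^2+a}\le b+\sqrt{(b\delta_++b^2+a)_+}$ — wait, one needs \eqref{nott1} with $t=\alpha$, $\delta$ replaced by $\delta$; Lemma~\ref{le:nott} then yields $\tfrac{\alpha-\delta}{2}\le b+\sqrt{b\delta+b^2+a}\le b+\sqrt{b\delta_++b^2+a}$ since $\delta\le\delta_+$ and $b\ge0$, a contradiction. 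So reality follows in both cases.

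For the lower bound \eqref{lapllb1} under \eqref{condA1}: here $\lambda_+\binom{x}{y}=\tfrac{\alpha+\delta}{2}+\sqrt{\bigl(\tfrac{\alpha-\delta}{2}\bigr)^2-|\beta|^2}\ge\tfrac{\alpha+\delta}{2}+\sqrt{\bigl(\tfrac{\alpha-\delta}{2}\bigr)^2-(b\alpha+a)}$, and I want to show this is at least $-a/b$. Equivalently, writing $s=\tfrac{\alpha-\delta}{2}$ and $m=\tfrac{\alpha+\delta}{2}$, I need $m+\sqrt{s^2-(b\alpha+a)}\ge-a/b$, i.e.\ $\sqrt{s^2-(b\alpha+a)}\ge-a/b-m$. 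If the right side is non-positive we are done; otherwise square: $s^2-b\alpha-a\ge a^2/b^2+2am/b+m^2$, i.e.\ $s^2-m^2\ge b\alpha+a+a^2/b^2+2am/b$. Now $s^2-m^2=-\alpha\delta$ and $b\alpha+a+a^2/b^2+2am/b=b\alpha+a+a^2/b^2+a(\alpha+\delta)/b=\tfrac{1}{b^2}(b\alpha+a)(b+\dots)$; this should factor nicely — in fact $b\alpha+a+\tfrac{a}{b}\alpha+\tfrac{a}{b}\delta+\tfrac{a^2}{b^2}=\tfrac{1}{b}(b\alpha+a)+\tfrac{a}{b^2}(b\alpha+a)+\tfrac{a\delta}{b}=\tfrac{1}{b^2}(b\alpha+a)(b+a/b)+\dots$; I expect the inequality $-\alpha\delta\ge\cdots$ to reduce to $(\alpha+a/b)(\delta+a/b)\le0$ or similar, which would follow from \eqref{abalmi} (so $\alpha_-+a/b\ge0$, hence $\alpha+a/b\ge0$) together with $\delta\le\delta_+\le-a/b-b<-a/b$ from \eqref{condA1}. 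This is the computational heart of the first bound and I would carry it out carefully.

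For \eqref{lapllb2} under \eqref{condA2}, I would argue that $\lambda_+\binom{x}{y}\ge\tfrac{\alpha+\delta}{2}+|s|-$ no; rather, since $\lambda_+\ge\tfrac{\alpha+\delta}{2}+\sqrt{s^2-(b\alpha+a)}\ge\tfrac{\alpha+\delta}{2}+\sqrt{s^2-(b\alpha+a)}$ and one uses that for the map $t\mapsto\tfrac{t}{2}+\sqrt{(\tfrac{t}{2})^2-c}$ increasing behaviour and the bound $s\ge\tfrac{\alpha_--\delta_+}{2}$. More cleanly: $\lambda_+\binom{x}{y}\ge\alpha-|\beta|^2/(\text{something})$ — actually the cleanest route is $\lambda_+\binom{x}{y}\ge\delta_+$ is false in general; instead note $\lambda_++\lambda_-=\alpha+\delta$ and $\lambda_+\lambda_-=\alpha\delta-|\beta|^2$, so $\lambda_+\ge\alpha$ iff $(\alpha-\lambda_-)\ge$ ... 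Let me instead use: $\lambda_-\le\delta$ (since the smaller root is at most $\min\{\alpha,\delta\}$ when $|\beta|^2\le$ appropriate — actually $\lambda_-\le\delta_+$ always once real), hence $\lambda_+=\alpha+\delta-\lambda_-\ge\alpha+\delta-\delta=\alpha\ge\alpha_-$? That is too strong. The honest statement: $\lambda_\pm$ interlace with... I would prove $\lambda_+\binom{x}{y}\ge\tfrac{\alpha+\delta}{2}+\sqrt{\bigl(\tfrac{\alpha-\delta}{2}\bigr)^2-(b\alpha+a)}$ and then that the right-hand side, as a function of $(\alpha,\delta)$ over $\alpha\ge\alpha_-$, $\delta\le\delta_+$, is minimised at $(\alpha_-,\delta_+)$, giving $\tfrac{\alpha_-+\delta_+}{2}+\sqrt{\bigl(\tfrac{\alpha_--\delta_+}{2}\bigr)^2-(b\alpha_-+a)}$. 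The monotonicity in $\alpha$ needs $\tfrac{d}{d\alpha}$ of that expression $\ge0$, which reduces to $\tfrac{\alpha-\delta}{2}\ge\sqrt{\bigl(\tfrac{\alpha-\delta}{2}\bigr)^2-(b\alpha+a)}\cdot\tfrac{}{}$... roughly $(\tfrac{\alpha-\delta}{2}-b)\ge0$, valid since by \eqref{condA2} $\tfrac{\alpha-\delta}{2}\ge\tfrac{\alpha_--\delta_+}{2}\ge b$. Then one checks algebraically that $\tfrac{\alpha_-+\delta_+}{2}+\sqrt{\bigl(\tfrac{\alpha_--\delta_+}{2}\bigr)^2-(b\alpha_-+a)}\ge\alpha_--b-\sqrt{(b\delta_++b^2+a)_+}$ by moving terms and squaring, using \eqref{condA2}. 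The main obstacle throughout is bookkeeping the case distinctions (whether intermediate square-rooted quantities are positive, when to take $(\cdot)_+$) and verifying the monotonicity of $\lambda_+$ in $\alpha$ and $\delta$ rigorously rather than heuristically; I expect the cleanest writeup to isolate a one-variable calculus lemma for $t\mapsto\tfrac{t+\delta}{2}+\sqrt{\bigl(\tfrac{t-\delta}{2}\bigr)^2-(bt+a)}$ being non-decreasing on the relevant range, analogous to Lemma~\ref{le:nott}.
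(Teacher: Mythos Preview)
Your overall strategy is correct and would yield a proof, and your closing prediction is exactly right: the paper proves \eqref{lapllb1} by studying the one-variable function
\[
  f(t)=\frac{t+\delta}{2}+\sqrt{\Bigl(\frac{t-\delta}{2}\Bigr)^2-bt-a}
\]
and showing it is increasing (using $b\delta+b^2+a\le0$ from \eqref{condA1}), so that $\lambda_+\ge f(\alpha)\ge f(-a/b)=-a/b$. Your direct algebraic route for \eqref{lapllb1} also works, but the factorisation you should reach is $(\alpha+a/b)(\delta+b+a/b)\le0$, not $(\alpha+a/b)(\delta+a/b)\le0$; the first factor is $\ge0$ by \eqref{abalmi} and the second is $\le0$ by \eqref{condA1}.

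The main place where the paper is genuinely more economical is \eqref{lapllb2}. Rather than minimising in two variables $(\alpha,\delta)$ and then checking a final inequality, the paper uses the algebraic rewriting
\[
  \Bigl(\frac{\alpha-\delta}{2}\Bigr)^2-b\alpha-a
  =\Bigl(\frac{\alpha-\delta}{2}-b\Bigr)^2-(b\delta+b^2+a)
\]
together with the elementary bound $\sqrt{r-s}\ge\sqrt{r}-\sqrt{s_+}$ (valid for $r\ge0$, $r\ge s$). Since $\frac{\alpha-\delta}{2}\ge b$ under \eqref{condA2}, this gives in one stroke
\[
  \lambda_+\binom{x}{y}\ge\frac{\alpha+\delta}{2}+\Bigl(\frac{\alpha-\delta}{2}-b\Bigr)-\sqrt{(b\delta+b^2+a)_+}
  =\alpha-b-\sqrt{(b\delta+b^2+a)_+},
\]
and then one passes to $\alpha_-$, $\delta_+$. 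This avoids the monotonicity-in-$\delta$ analysis and the final squaring step that your route would require. Your reality argument via the contrapositive of Lemma~\ref{le:nott} is exactly what the paper does.
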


\begin{proof}
Let $\alpha$, $\beta$ and $\delta$ be as in \eqref{alphabetadelta}.
Suppose that $\lambda_\pm\binom{x}{y}\notin\RR$.
Then, by \eqref{lambdapmalpha} and \eqref{estbeta}, we have
\[
  \biggl(\frac{\alpha-\delta}{2}\biggr)^2 < |\beta|^2 \le b\alpha+a.
\]
This, together with Lemma~\ref{le:nott}, implies that
\[
  b\delta+b^2+a > 0 \qquad\text{and}\qquad
  \frac{\alpha-\delta}{2} < b+\sqrt{b\delta+b^2+a\,}.
\]
By the definition of $\alpha_-$ and $\delta_+$ we obtain
\[
  b\delta_++b^2+a > 0 \qquad\text{and}\qquad
  \frac{\alpha_--\delta_+}{2} < b+\sqrt{b\delta_++b^2+a\,},
\]
which is a contradiction to \condA.  Hence $\lambda_\pm\binom{x}{y}\in\RR$.

It follows again from \eqref{lambdapmalpha} and \eqref{estbeta} that
\begin{equation}\label{thlink}
  \lambda_+\binom{x}{y}
  \ge \frac{\alpha+\delta}{2} +\sqrt{\biggl(\frac{\alpha-\delta}{2}\biggr)^2-b\alpha-a\,}.
\end{equation}
Assume that \eqref{condA1} holds.  Then
\begin{equation}\label{897}
  b\delta+b^2+a \le 0
\end{equation}
and $b>0$.
Define the function
\[
  f(t) \defeq \frac{t+\delta}{2}+\sqrt{\biggl(\frac{t-\delta}{2}\biggr)^2-bt-a\,},
  \qquad t\in\RR,
\]
which is real-valued by \eqref{897}.
Its derivative is
\[
  f'(t) = \frac{1}{2}+\frac{\frac{t-\delta}{2}-b}{2\sqrt{\bigl(\frac{t-\delta}{2}\bigr)^2-bt-a}\,}
  = \frac{f(t)-(\delta+b)}{2\sqrt{\bigl(\frac{t-\delta}{2}\bigr)^2-bt-a}\,}\,,
\]
which implies that $f'(t)>0$ if and only if $f(t)>\delta+b$.
From this it follows that the sign of $f'$ is constant on $\RR$.
Since $f(t)\to\infty$ as $t\to\infty$, we obtain that $f$ is increasing on $\RR$
and $f(t)>\delta+b$ for all $t\in\RR$.
Relations \eqref{abalmi} and \eqref{897} imply that
\[
  \alpha \ge \alpha_- \ge -\frac{a}{b} \ge \delta+b \ge \delta.
\]
Hence, by \eqref{thlink},
\[
  \lambda_+\binom{x}{y} \ge f(\alpha) \ge f\Bigl(-\frac{a}{b}\Bigr)
  = -\frac{a}{b}\,,
\]
i.e.\ \eqref{lapllb1} holds.

Now assume that \eqref{condA2} is satisfied.  Note first that, for
$r,s\in\RR$ such that $r\ge0$ and $r\ge s$, one has
\[
  \sqrt{r-s} \ge \sqrt{r}-\sqrt{s_+}\,,
\]
which is easy to see.  From this and the relation $\frac{\alpha-\delta}{2}\ge b$
it follows that
\begin{align*}
  \lambda_+\binom{x}{y}
  &\ge \frac{\alpha+\delta}{2} + \sqrt{\biggl(\frac{\alpha-\delta}{2}\biggr)^2-b\alpha-a}
  \\[0.5ex]
  &= \frac{\alpha+\delta}{2} + \sqrt{\biggl(\frac{\alpha-\delta}{2}-b\biggr)^2-(b\delta+b^2+a)}
  \\[0.5ex]
  &\ge \frac{\alpha+\delta}{2} + \sqrt{\biggl(\frac{\alpha-\delta}{2}-b\biggr)^2}
  - \sqrt{(b\delta+b^2+a)_+}
  \\[0.5ex]
  &= \alpha-b-\sqrt{(b\delta+b^2+a)_+}
  \\[0.5ex]
  &\ge \alpha_--b-\sqrt{\bigl(b\delta_++b^2+a\bigr)_+\!}\,,
\end{align*}
i.e.\ \eqref{lapllb2} holds.
\end{proof}

\begin{lemma}\label{lamlem0}
Let $x\in\dom(\fra)\backslash\{0\}$ and $y\in\dom(\frd)\backslash\{0\}$ and let $\mu$
be as in \eqref{defmu}.  Then
\[
  \Re\lambda_-\binom{x}{y} \le \mu.
\]
\end{lemma}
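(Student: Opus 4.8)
The plan is to use the closed-form expression \eqref{lambdapmalpha} for $\lambda_-\binom{x}{y}$ in terms of the quantities $\alpha=\fra[x]/\|x\|^2$, $\delta=\frd[y]/\|y\|^2$ and $|\beta|^2=|\langle y,B^*x\rangle|^2/(\|x\|^2\|y\|^2)$ from \eqref{alphabetadelta}, together with three elementary facts: $\delta\le\delta_+$, the estimate $|\beta|^2\le b\alpha+a$ from \eqref{estbeta}, and the a priori bound $\Re\lambda_-\binom{x}{y}\le\frac{\alpha+\delta}{2}$. The last of these holds because of the convention for the square root: if the radicand $\bigl(\frac{\alpha-\delta}{2}\bigr)^2-|\beta|^2$ is nonnegative, then $\lambda_-\binom{x}{y}$ is real and we subtract a nonnegative number from $\frac{\alpha+\delta}{2}$, while if the radicand is negative the square root is purely imaginary and $\Re\lambda_-\binom{x}{y}=\frac{\alpha+\delta}{2}$. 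I would then split into two cases according to whether inequality \eqref{nott1} holds with $t=\alpha$.

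In the case $\bigl(\frac{\alpha-\delta}{2}\bigr)^2<b\alpha+a$, Lemma~\ref{le:nott} (in its strict form) gives $b\delta+b^2+a>0$ and $\frac{\alpha-\delta}{2}<b+\sqrt{b\delta+b^2+a}$; combining this with $\Re\lambda_-\binom{x}{y}\le\frac{\alpha+\delta}{2}=\delta+\frac{\alpha-\delta}{2}$, with $\delta\le\delta_+$, and with the monotonicity of $t\mapsto t+\sqrt{bt+b^2+a}$ (which uses $b\ge0$), yields $\Re\lambda_-\binom{x}{y}<\delta_++b+\sqrt{b\delta_++b^2+a}=\mu$. In the complementary case $\bigl(\frac{\alpha-\delta}{2}\bigr)^2\ge b\alpha+a$ we also have $|\beta|^2\le b\alpha+a\le\bigl(\frac{\alpha-\delta}{2}\bigr)^2$, so $\lambda_-\binom{x}{y}$ is real and, since $s\mapsto-\sqrt{(\frac{\alpha-\delta}{2})^2-s}$ is nondecreasing on the relevant interval, $\lambda_-\binom{x}{y}\le\frac{\alpha+\delta}{2}-v$ with $v\defeq\sqrt{\bigl(\frac{\alpha-\delta}{2}\bigr)^2-b\alpha-a}\ge0$. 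Setting $u\defeq\frac{\alpha-\delta}{2}-b$ one checks the identity $u^2-v^2=b\delta+b^2+a$ in one line, and then $\lambda_-\binom{x}{y}\le\delta+b+(u-v)$. If $u\le v$ this is $\le\delta+b\le\mu$; otherwise $u>v\ge0$, so $b\delta+b^2+a=(u-v)(u+v)>0$, and since $u=\sqrt{v^2+(b\delta+b^2+a)}\ge\sqrt{b\delta+b^2+a}$ we get $u-v=(b\delta+b^2+a)/(u+v)\le(b\delta+b^2+a)/u\le\sqrt{b\delta+b^2+a}$. Hence $\lambda_-\binom{x}{y}\le\delta+b+\sqrt{\bigl(b\delta+b^2+a\bigr)_+}\le\delta_++b+\sqrt{\bigl(b\delta_++b^2+a\bigr)_+}=\mu$, using $\delta\le\delta_+$ once more.

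The only real obstacle is the bookkeeping around the square root — one has to keep track of the sign of the radicand and take real parts correctly — and this is precisely what the a priori bound $\Re\lambda_-\binom{x}{y}\le\frac{\alpha+\delta}{2}$ is there to handle. The other potential difficulty, namely estimating $\frac{\alpha+\delta}{2}-v$ uniformly in $\alpha$ without knowing whether $\alpha$ is large or small, disappears once one passes to the variables $u$ and $v$ satisfying $u^2-v^2=b\delta+b^2+a$: everything then reduces to the elementary inequality $u-v\le(b\delta+b^2+a)/(u+v)$ together with monotonicity in $\delta$. Note in particular that $\alpha_-$ plays no role in the argument, which is consistent with $\mu$ not depending on it.
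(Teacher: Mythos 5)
Your proof is correct, and it takes a genuinely different route from the paper's. The paper splits according to whether there \emph{exists} $t\ge\alpha$ with $\bigl(\frac{t-\delta}{2}\bigr)^2\le bt+a$ (a quantified condition over the whole ray $[\alpha,\infty)$), handles the affirmative case by applying Lemma~\ref{le:nott} at that $t$, and in the complementary case studies the sign of the derivative of $f(t)=\frac{t+\delta}{2}-\sqrt{(\frac{t-\delta}{2})^2-bt-a}$ to locate its maximum on $[\alpha,\infty)$. You instead split pointwise at $t=\alpha$: in the case $\bigl(\frac{\alpha-\delta}{2}\bigr)^2<b\alpha+a$ you use the strict form of Lemma~\ref{le:nott} together with the a priori bound $\Re\lambda_-\le\frac{\alpha+\delta}{2}$, and in the complementary case you replace the paper's calculus with the algebraic identity $u^2-v^2=b\delta+b^2+a$ for $u=\frac{\alpha-\delta}{2}-b$ and $v=\sqrt{(\frac{\alpha-\delta}{2})^2-b\alpha-a}$, then bound $u-v=(u^2-v^2)/(u+v)\le\sqrt{(b\delta+b^2+a)_+}$ by hand. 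The two proofs are of comparable length; yours is more elementary (no derivatives, no monotonicity-of-$f$ analysis) and exposes the structural identity $u^2-v^2=b\delta+b^2+a$ cleanly, while the paper's function-analytic phrasing has the advantage of being reusable almost verbatim for $\lambda_+$ in the companion Lemma~\ref{le:lapllb}, which it handles with the same $f$ up to a sign. One small point worth making explicit in your write-up: in your second case you implicitly use $b\alpha+a\ge0$ (so that $u-v$ makes sense without $(\cdot)_+$); this is automatic from $0\le|\beta|^2\le b\alpha+a$, and is also recorded in the paper as~\eqref{abalmi}.
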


\begin{proof}
Let $x\in\dom(\fra)\backslash\{0\}$ and $y\in\dom(\frd)\backslash\{0\}$ and let
$\alpha$, $\beta$ and $\delta$ be as in \eqref{alphabetadelta}.
Then
\[
  \lambda_- \defeq \lambda_-\binom{x}{y}
  = \frac{\alpha+\delta}{2}-\sqrt{\biggl(\frac{\alpha-\delta}{2}\biggr)^2-|\beta|^2}
\]
and \eqref{estbeta} is valid.

Let us first consider the case when
\[
  \biggl(\frac{t-\delta}{2}\biggr)^2 \le bt+a \qquad\text{for some }t\ge\alpha.
\]
It follows from Lemma~\ref{le:nott} that the inequalities in \eqref{nott2} hold, which
imply
\begin{align*}
  \Re\lambda_- &\le \frac{\alpha+\delta}{2} \le \frac{t+\delta}{2}
  \le \delta+b+\sqrt{b\delta+b^2+a\,}
  \\[0.5ex]
  &\le \delta_++b+\sqrt{b\delta_++b^2+a\,} = \mu.
\end{align*}

Now we consider the case when
\begin{equation}\label{case2}
  \biggl(\frac{t-\delta}{2}\biggr)^2 > bt+a \qquad\text{for all }t\ge\alpha.
\end{equation}
It follows from \eqref{case2} and \eqref{estbeta} that $\lambda_-\in\RR$ and
\begin{equation}\label{est1case2}
  \lambda_- \le \frac{\alpha+\delta}{2}-\sqrt{\biggl(\frac{\alpha-\delta}{2}\biggr)^2-b\alpha-a\,}.
\end{equation}
Define the function
\[
  f(t) \defeq \frac{t+\delta}{2}-\sqrt{\biggl(\frac{t-\delta}{2}\biggr)^2-bt-a}
\]
for such $t$ for which the expression under the square root is non-negative, i.e.\
either $\dom(f)=\RR$ or $\dom(f)=(-\infty,t_-]\cup[t_+,\infty)$ where $t_\pm$
are the zeros of the polynomial under the square root:
\[
  t_\pm = \delta+2b\pm2\sqrt{b\delta+b^2+a\,}.
\]
The derivative of $f$ is
\[
  f'(t) = \frac{1}{2}-\frac{\frac{t-\delta}{2}-b}{2\sqrt{\bigl(\frac{t-\delta}{2}\bigr)^2-bt-a\,}\,}
  = \frac{\delta+b-f(t)}{2\sqrt{\bigl(\frac{t-\delta}{2}\bigr)^2-bt-a\,}\,}\,,
\]
which implies that
\begin{equation}\label{fder_eqiv}
  f'(t) > 0 \qquad\Longleftrightarrow\qquad f(t) < \delta+b.
\end{equation}
If $a=b=0$, then $\beta=0$ and the assertion is clear since then $\lambda_-=\min\{\alpha,\delta\}$.
So assume that $a\ne0$ or $b\ne0$.  Then $f$ is not constant.
It follows from \eqref{fder_eqiv} that the sign of $f'$ is constant on each interval
in the domain of $f$.
Let us first consider the case when $\dom(f)=\RR$.
Since $f(t)\to-\infty$ as $t\to-\infty$, we have $f(t)<\delta+b$ for all $t\in\RR$
and hence (with \eqref{est1case2})
\[
  \lambda_- \le f(\alpha) < \delta+b \le \delta_++b \le \mu.
\]
Now consider the case when $\dom(f)\ne\RR$.  It follows from \eqref{case2}
that $\alpha\in[t_+,\infty)$.  Moreover,
\[
  f(t_+) = \frac{t_++\delta}{2} = \delta+b+\sqrt{b\delta+b^2+a\,} \ge \delta+b,
\]
which, by \eqref{fder_eqiv}, implies that $f'(t)\le0$ on $(t_+,\infty)$.
Hence (again with \eqref{est1case2})
\[
  \lambda_- \le f(\alpha) \le f(t_+) = \delta+b+\sqrt{b\delta+b^2+a\,}
  \le \delta_++b+\sqrt{b\delta_++b^2+a\,} = \mu,
\]
which proves the assertion also in this case.
\end{proof}

\noindent
The next proposition shows that the closure of the quadratic numerical range
is contained in $\BB$.

\begin{proposition}\label{numrange}
Suppose that Assumption~{\rm\ref{assump}} is satisfied.
Let $\cM$ be the operator as in Theorem~{\rm\ref{jsaness}},
$W^2(\cM)$ as in Definition~{\rm\ref{def:W2}} and $\BB$, $\mu$, $\mu_+$
as in Definition~{\rm\ref{def:Betc}}.
Then $\ov{W^2(\cM)}\subset\BB$.

Moreover, if \condA{} is satisfied, then $W^2(\cM)\subset\RR$ and
\begin{equation}\label{Lambdapmsep}
  \lambda_-\binom{x}{y} \le \mu, \quad \lambda_+\binom{x}{y} \ge \mu_+
  \qquad \text{for}\;\; x\in\dom(\fra)\backslash\{0\},\;y\in\dom(\frd)\backslash\{0\}.
\end{equation}
\end{proposition}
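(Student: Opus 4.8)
The plan is to reduce the whole statement to estimates on the eigenvalues $\lambda_\pm\binom{x}{y}$ of the $2\times2$ matrices $\cM_{x,y}$. Throughout I would use the abbreviations $\alpha,\beta,\delta$ of \eqref{alphabetadelta}, which satisfy $\alpha\ge\alpha_-$, $\delta\le\delta_+$ (and $\delta\ge\delta_-$ when $D$ is bounded) and the key inequality $|\beta|^2\le b\alpha+a$ of \eqref{estbeta}. Since in every case of \eqref{defB} the set $\BB$ is closed, it suffices to show $\lambda_\pm\binom{x}{y}\in\BB$ for all $x\in\dom(\fra)\backslash\{0\}$, $y\in\dom(\frd)\backslash\{0\}$; the inclusion $\ov{W^2(\cM)}\subset\BB$ then follows automatically.

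I would first dispose of the case in which \condA{} holds. Here Lemma~\ref{le:lapllb} gives $\lambda_\pm\binom{x}{y}\in\RR$ (hence $W^2(\cM)\subset\RR$) together with $\lambda_+\binom{x}{y}\ge\mu_+$, after matching its two alternatives with the definition \eqref{defmupl} of $\mu_+$, and Lemma~\ref{lamlem0} gives $\lambda_-\binom{x}{y}\le\mu$ (using reality to drop the real part). This is exactly \eqref{Lambdapmsep}. When $D$ is bounded one also has the trivial lower bound $\lambda_-\binom{x}{y}\ge\frac{\alpha+\delta}2-\bigl|\frac{\alpha-\delta}2\bigr|=\min\{\alpha,\delta\}\ge\mu_-$, and $\lambda_+\ge\lambda_-\ge\mu_-$; so $\lambda_-\in[\mu_-,\mu]$ (resp.\ $(-\infty,\mu]$ when $D$ is unbounded) and $\lambda_+\in[\mu_+,\infty)$, i.e.\ $\lambda_\pm\in\BB$.

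Next, suppose \condA{} fails. If $\lambda_\pm\binom{x}{y}$ are real, then as above $\lambda_\pm\ge\min\{\alpha,\delta\}\ge\mu_-$ when $D$ is bounded, while for unbounded $D$ one has $\lambda_\pm\in\RR\subset\BB$; either way $\lambda_\pm$ lies in the real part of $\BB$. If $\lambda_\pm\binom{x}{y}\notin\RR$, then by \eqref{lambdapmalpha} and \eqref{estbeta} one is in the regime $\bigl(\frac{\alpha-\delta}2\bigr)^2<|\beta|^2\le b\alpha+a$, with $\Re\lambda_\pm=\frac{\alpha+\delta}2$ and $|\Im\lambda_\pm|^2=|\beta|^2-\bigl(\frac{\alpha-\delta}2\bigr)^2\le b\alpha+a-\bigl(\frac{\alpha-\delta}2\bigr)^2$. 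The bound $\Re\lambda_\pm\le\mu$ is again Lemma~\ref{lamlem0}; the bound $\Re\lambda_\pm\ge\xi_2$ (for bounded $D$) is immediate from $\alpha\ge\alpha_-$, $\delta\ge\delta_-$; and $\Re\lambda_\pm\ge\xi_1$ follows because $\bigl|\frac{\alpha-\delta}2\bigr|<\sqrt{b\alpha+a}$ forces $\frac{\alpha+\delta}2>\alpha-\sqrt{b\alpha+a}$, whereupon minimising the convex function $t\mapsto t-\sqrt{bt+a}$ over $t\ge\alpha_-$ (legitimate since $b\alpha+a\ge b\alpha_-+a\ge0$ by \eqref{abalmi}) produces the lower bound $\xi_1$ of \eqref{defxi1}. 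Hence $\Re\lambda_\pm\ge\max\{\xi_1,\xi_2\}=\xi_-$. For the imaginary part I would maximise $h(\alpha,\delta)\defeq b\alpha+a-\bigl(\frac{\alpha-\delta}2\bigr)^2$ over $\alpha\ge\alpha_-$, $\delta\le\delta_+$: for fixed $\alpha$ the supremum is attained at $\delta=\min\{\alpha,\delta_+\}$, after which $\alpha\mapsto h$ is linear increasing on $(-\infty,\delta_+]$ and concave on $[\delta_+,\infty)$ with vertex at $\alpha=\delta_++2b$, so a short case distinction according to whether $\delta_++2b\gtrless\alpha_-$ (i.e.\ $b\gtrless\frac{\alpha_--\delta_+}2$) gives the value $\bigl(b\delta_++b^2+a-\bigl((\tfrac{\alpha_--\delta_+}2-b)_+\bigr)^2\bigr)_+=\eta^2$ of \eqref{defeta}. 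Therefore $|\Im\lambda_\pm|\le\eta$ and $\Re\lambda_\pm\in[\xi_-,\mu]$, so $\lambda_\pm\in\BBnr\subset\BB$.

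The one computation that needs care is this last optimisation of $|\Im\lambda_\pm|$: one has to track which of the constraints $\alpha\ge\alpha_-$, $\delta\le\delta_+$ is active, and the two resulting regimes are reconciled with \eqref{defeta} through the identity $b\delta_++b^2+a-\bigl(\tfrac{\alpha_--\delta_+}2-b\bigr)^2=b\alpha_-+a-\bigl(\tfrac{\alpha_--\delta_+}2\bigr)^2$. Everything else is just invoking Lemmas~\ref{le:lapllb} and \ref{lamlem0} and elementary properties of the eigenvalues of a $2\times2$ matrix, together with the closedness of $\BB$.
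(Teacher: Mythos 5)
Your proof is correct and follows essentially the same route as the paper: reduce to $\lambda_\pm\binom{x}{y}\in\BB$ using the closedness of $\BB$, dispose of the real case via Lemmas~\ref{le:lapllb} and \ref{lamlem0} together with the trivial bound $\lambda_-\ge\min\{\alpha,\delta\}\ge\mu_-$, and then bound $\Re z$ and $\Im z$ in the non-real case from \eqref{estbeta}. The only cosmetic difference is in the imaginary-part estimate: the paper obtains $|\Im z|\le\eta$ by rewriting $b\alpha+a-(\tfrac{\alpha-\delta}{2})^2=b\delta+b^2+a-(\tfrac{\alpha-\delta}{2}-b)^2$ and then monotonically replacing $\alpha,\delta$ by $\alpha_-,\delta_+$, while you carry out the equivalent two-variable maximisation of $h(\alpha,\delta)$ explicitly with the same reconciliation identity.
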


\begin{proof}
Since $\BB$ is closed, it suffices to prove that $W^2(\cM)\subset\BB$.
Let $z\in W^2(\cM)$.
Then there exist $x\in\dom(\fra)\backslash\{0\}$ and $y\in\dom(\frd)\backslash\{0\}$
such that $z=\lambda_+\binom{x}{y}$ or $z=\lambda_-\binom{x}{y}$.
Let $\alpha$, $\beta$ and $\delta$ be as in \eqref{alphabetadelta}.

First assume that $z\in\RR$.  If condition \condA{} is satisfied, then,
by Lemmas~\ref{le:lapllb} and \ref{lamlem0}, we have
either $z=\lambda_+\binom{x}{y}\ge\mu_+$ or $z=\lambda_-\binom{x}{y}\le\mu$,
which also shows \eqref{Lambdapmsep}.
If $D$ is bounded, then
\[
  z \ge \lambda_-\binom{x}{y} \ge \frac{\alpha+\delta}{2}-\biggl|\frac{\alpha-\delta}{2}\biggr|
  = \min\{\alpha,\delta\} \ge \min\{\alpha_-,\delta_-\}
  = \mu_-,
\]
which shows that $z\in\BB$ when $z\in\RR$.

Now assume that $z\notin\RR$.
Using \eqref{estbeta} and the relation $t^2\ge ((t)_+)^2$ for $t\in\RR$
we obtain for the imaginary part of $z$ that
\begin{align*}
  |\Im z| &= \sqrt{\biggl(|\beta|^2-\biggl(\frac{\alpha-\delta}{2}\biggr)^2\biggr)_+}
  \le \sqrt{\biggl(b\alpha+a-\biggl(\frac{\alpha-\delta}{2}\biggr)^2\biggr)_+}
  \\[0.5ex]
  &= \sqrt{\biggl(b\delta+b^2+a-\biggl(\frac{\alpha-\delta}{2}-b\biggr)^2\biggr)_+}
  \displaybreak[0]\\[0.5ex]
  &\le \sqrt{\biggl(b\delta+b^2+a-\biggl(\Bigl(\frac{\alpha-\delta}{2}-b\Bigr)_+\biggr)^2\biggr)_+}
  \\[0.5ex]
  &\le \sqrt{\biggl(b\delta_++b^2+a-\biggl(\Bigl(\frac{\alpha_--\delta_+}{2}-b\Bigr)_+\biggr)^2\biggr)_+}\,.
\end{align*}
The upper bound for $\Re z$ follows directly from Lemma~\ref{lamlem0}.
For the lower bound observe that
\[
  0 > \biggl(\frac{\alpha-\delta}{2}\biggr)^2 - |\beta|^2
  \ge \biggl(\frac{\alpha-\delta}{2}\biggr)^2 - b\alpha - a.
\]
Hence $b\alpha+a>0$ and
\[
  \frac{\alpha-\delta}{2} < \sqrt{b\alpha+a\,},
\]
which implies that
\begin{equation}\label{499}
  \Re z = \frac{\alpha+\delta}{2} > \alpha-\sqrt{b\alpha+a\,}.
\end{equation}
If $b=0$, then the right-hand side of \eqref{499} is bounded from
below by $\alpha_--\sqrt{a}$, which is equal to $\xi_1$ in that case.
For the case $b>0$ we consider the function
\[
  f(t) \defeq t-\sqrt{bt+a\,}, \qquad t\in\Bigl[-\frac{a}{b},\infty\Bigr),
\]
which attains its minimum at $t_0\defeq \frac{b}{4}-\frac{a}{b}$\,.
If $t_0\le\alpha_-$, then
\[
  \min_{t\in[\alpha_-,\infty)}f(t) = f(\alpha_-) = \alpha_--\sqrt{b\alpha_-+a}\,.
\]
If $t_0>\alpha_-$, then
\[
  \min_{t\in[\alpha_-,\infty)}f(t) = f(t_0) = t_0-\frac{b}{2} > \alpha_--\frac{b}{2}\,.
\]
Hence $\Re z\ge \xi_1$ also in this case.

If $D$ is bounded, then one also has $\delta\ge\delta_-$ and hence
\[
  \Re z = \frac{\alpha+\delta}{2} \ge \frac{\alpha_-+\delta_-}{2} = \xi_2.
\]
This shows that $\Re z\ge\xi_-$ in all cases and hence $z\in\BB$.
\end{proof}

Next we need an auxiliary lemma before we prove the spectral inclusion.
For a similar result for certain diagonally dominant block operator matrices
we refer to \cite[Theorem~4.2]{T09}.

\begin{lemma}\label{appB}
Suppose that Assumption~{\rm\ref{assump}} is satisfied and
let $z\in\CC\backslash(\delta_+,\delta_++b_0)$.
Then $z\notin\ov{W^2(\cM)}$ implies that $\cM-z$ has closed range.
\end{lemma}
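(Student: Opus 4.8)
The statement asserts that if $z\in\CC\setminus(\delta_+,\delta_++b_0)$ and $z\notin\ov{W^2(\cM)}$, then $\cM-z$ has closed range. The natural strategy is the standard one: show that $\cM-z$ is bounded below modulo a compact perturbation, or more directly, that every approximating sequence $(x_n,y_n)^T\in\dom(\cM)$ with $\|x_n\|^2+\|y_n\|^2=1$ and $(\cM-z)(x_n,y_n)^T\to0$ has a convergent subsequence, which then forces $z\in\sigmap(\cM)\subset W^2(\cM)$ by Lemma~\ref{qnrps}, contradicting $z\notin\ov{W^2(\cM)}$. Actually, to conclude closed range one shows that such a sequence cannot exist at all unless $z$ is an eigenvalue; more carefully, one argues that if $\ran(\cM-z)$ is not closed then there is a singular sequence (normalised, with $(\cM-z)(x_n,y_n)^T\to0$, and $(x_n,y_n)^T$ having no convergent subsequence — equivalently one may subtract off the kernel). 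I would then derive a contradiction with $z\notin\ov{W^2(\cM)}$.

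First I would record the consequence of $z\notin\ov{W^2(\cM)}$: there is $\eps>0$ with $\dist(z,W^2(\cM))\ge\eps$, hence for every normalised pair $(x,y)$ with $x\in\dom(\fra)\setminus\{0\}$, $y\in\dom(\frd)\setminus\{0\}$, the $2\times2$ matrix $\cM_{x,y}$ satisfies $\|(\cM_{x,y}-z)\|^{-1}$ is bounded — more usefully, $\bigl\|(\cM_{x,y}-z)\binom{\|x\|}{\|y\|}\bigr\|\ge\eps\bigl(\|x\|^2+\|y\|^2\bigr)^{1/2}$ times a constant (the min of $|\lambda_\pm - z|$ over the unit circle computation as in the proof of Lemma~\ref{qnrps}). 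Then given a singular sequence $(x_n,y_n)^T$, I would use the computation at the end of the proof of Lemma~\ref{qnrps}: the vector $(\cM-z)\binom{x_n}{y_n}=\binom{u_n}{v_n}$ has components which, paired against $x_n$ and $y_n$ respectively, reproduce the action of $\cM_{x_n,y_n}-z$ on $\binom{\|x_n\|}{\|y_n\|}$ up to harmless terms. Concretely, from \eqref{form1} and \eqref{form2} applied to $(x_n,y_n)$ with $(\cM-z)(x_n,y_n)^T=(u_n,v_n)^T$ one gets $\fra[x_n]-z\|x_n\|^2+\langle y_n,B^*x_n\rangle=\langle u_n,x_n\rangle\to0$ and $-\langle B^*x_n,y_n\rangle+\frd[y_n]-z\|y_n\|^2=\langle v_n,y_n\rangle\to0$. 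Dividing by $\|x_n\|$ and $\|y_n\|$ (handling degenerate cases $x_n=0$ or $y_n=0$ separately, exactly as in Lemma~\ref{qnrps}) shows $(\cM_{x_n,y_n}-z)\binom{\|x_n\|}{\|y_n\|}\to0$, whence $\dist(z,\sigma(\cM_{x_n,y_n}))\to0$, i.e.\ $z\in\ov{W^2(\cM)}$ — contradiction. This already shows no singular sequence exists, so $\ran(\cM-z)$ is closed.

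The delicate points are the degenerate cases and the role of the hypothesis $z\notin(\delta_+,\delta_++b_0)$. If along a subsequence $\|x_n\|\to0$ or $\|y_n\|\to0$ (so one cannot form $\cM_{x_n,y_n}$ with both entries nonzero), one must argue directly. If $\|x_n\|\to0$ (so $\|y_n\|\to1$), then from Lemma~\ref{le:approx_es}\,(i) $\|B^*x_n\|$ is bounded, so $\langle y_n,B^*x_n\rangle\to0$ and $\fra[x_n]\to0$ by \eqref{form11}; then \eqref{form2} gives $\frd[y_n]-z\|y_n\|^2\to0$, i.e.\ $z\in\ov{W(D)}$, and for $D$ self-adjoint bounded above with $\max\sigma(D)=\delta_+$ this means $z\le\delta_+$, so $z\in(-\infty,\delta_+]$; one then uses Lemma~\ref{le:qnrnr} (if $\dim\cH_1\ge2$, $W(D)\subset W^2(\cM)$) to get the contradiction, or handles $\dim\cH_1=1$ by hand. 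If $\|y_n\|\to0$ (so $\|x_n\|\to1$), one similarly finds $z\in\ov{W(A)}\subset[\alpha_-,\infty)$, and since $(x_n,y_n)^T\in\dom(\cM)$ with the top component $(A-z)x_n\approx -By_n\to$ something controlled, one deduces $(A-z)x_n\to0$, giving $z\in\sigmaapp(A)\subset\sess(A)\cup\sigmap(A)$; again use $W(A)\subset W^2(\cM)$ when $\dim\cH_2\ge2$. The hypothesis $z\notin(\delta_+,\delta_++b_0)$ is what excludes the remaining bad window: when $z$ is just to the right of $\delta_+$, the inverse $(D-z)^{-1}$ is not available with norm $<1/b_0$, so one cannot solve for $y_n$ in terms of $x_n$ and the reduction to the Schur complement (Lemma~\ref{le:approx_es}\,(ii),(iii), valid on $\UU$) breaks down; outside that window the argument via the $2\times2$ matrices is self-contained and does not need $\UU$.

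I expect the main obstacle to be the bookkeeping in the degenerate cases $\|x_n\|\to0$ and $\|y_n\|\to0$, ensuring that in each case the limiting value of $z$ lands in $\ov{W(D)}$ or $\ov{W(A)}$ respectively and that these are contained in $\ov{W^2(\cM)}$ — which needs the dimension hypotheses of Lemma~\ref{le:qnrnr}, with the one-dimensional exceptional cases ($\dim\cH_1=1$ or $\dim\cH_2=1$) checked separately (in those cases $\cM$ is essentially a $1\times n$ or $n\times1$ situation and one computes directly). A clean way to package everything is: suppose $\ran(\cM-z)$ is not closed, extract a singular sequence, pass to a subsequence so that $\|x_n\|\to s$ and $\|y_n\|\to t$ with $s^2+t^2=1$; if $s,t>0$ use the $2\times2$-matrix argument; if $s=0$ or $t=0$ use the direct arguments above; in all cases conclude $z\in\ov{W^2(\cM)}$, a contradiction.
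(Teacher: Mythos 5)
Your overall approach matches the paper's in its broad strokes: argue the contrapositive, extract a normalised sequence with $(\cM-z)\binom{x_n}{y_n}\to0$, and feed it into the $2\times2$ matrices $\cM_{x_n,y_n}$. But you organise the cases by the limiting behaviour of $\|x_n\|,\|y_n\|$ (passing to a subsequence with $\|x_n\|\to s$, $\|y_n\|\to t$), whereas the paper organises them by the location of $z$, and this difference is not cosmetic: your branch $\|x_n\|\to0$ contains a genuine error. You assert that, because $\|B^*x_n\|$ is bounded (Lemma~\ref{le:approx_es}\,(i)), one has $\langle y_n,B^*x_n\rangle\to0$. That is a non-sequitur: a bounded sequence paired against a normalised one need not tend to zero. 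What is actually needed is $B^*x_n\to0$, and $\|x_n\|\to0$ does \emph{not} give that, since $B^*$ is only $\fra^{1/2}$-bounded, not bounded; from \eqref{domcons} you would need $\fra[x_n]\to0$, which is precisely the conclusion you are then trying to draw from $\langle y_n,B^*x_n\rangle\to0$, so the argument is circular. (Your other degenerate branch, $\|y_n\|\to0$, is fine: there $\|y_n\|\to0$ together with $\|B^*x_n\|$ bounded does give $\langle y_n,B^*x_n\rangle\to0$, and one reads off $z\in\ov{W(A)}$ from \eqref{ul1}.)

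The paper avoids this trap entirely. After the preliminary reduction to $\dim\cH_1\ge2$, $\dim\cH_2\ge2$, $A$ unbounded (otherwise $B$ is bounded and one invokes the bounded theory), it treats $z\in\CC\setminus\RR$ by subtracting the imaginary parts of \eqref{ul1} and \eqref{ul2}, which forces $\|x_n\|,\|y_n\|\to1/\sqrt2$ — the degenerate branch simply never arises — and then, \emph{after recording that the entries of $\cM_{x_n,y_n}$ are bounded}, concludes $\det(\cM_{x_n,y_n}-z)\to0$ and hence $z\in\ov{W^2(\cM)}$. You should record that boundedness too: without it, $(\cM_{x_n,y_n}-z)\binom{\|x_n\|}{\|y_n\|}\to0$ does not by itself yield $\dist(z,\sigma(\cM_{x_n,y_n}))\to0$. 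For real $z$, the paper does not look at the sequence at all when $\delta_-\le z\le\delta_+$ or $z\ge\alpha_-$: one simply notes $z\in\ov{W(D)}$ or $z\in\ov{W(A)}$ and cites Lemma~\ref{le:qnrnr}, with the case $z<\mu_-$ ruled out by a positivity contradiction. The sequence is only re-examined for $z\in(\delta_++b_0,\alpha_-)\subset\UU$, where Lemma~\ref{le:approx_es}\,(iii) gives $\liminf\|x_n\|>0$ and a short extra argument gives $\liminf\|y_n\|>0$, reducing to the $2\times2$-matrix branch; as you correctly identify, this is where the hypothesis $z\notin(\delta_+,\delta_++b_0)$ is used. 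In short: the right way around the difficulty is to settle the ``easy'' real $z$ by direct membership in $\ov{W(D)}$ or $\ov{W(A)}$ rather than by chasing it through the sequence.
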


\begin{proof}
We show the contraposition.  Let $z\in\CC\setminus(\delta_+,\delta_++b_0)$
and suppose that $\ran(\cM-z)$ is not closed.
Then, $z\in\sigmaapp(\cM)$, i.e.\ there exists a sequence $(x_n,y_n)^T\in\dom(\cM)$ with
\[
  (\cM-z)\binom{x_n}{y_n} \to 0 \qquad\text{and}\qquad
  \|x_n\|^2 + \|y_n\|^2 = 1 \quad\forall\, n\in\mathbb{N};
\]
see \cite[Theorem~IV.5.2]{katopert}.
We have to show that $z\in\ov{W^2(\cM)}$.

If $\dim\cH_1=1$ or $\dim\cH_2=1$, then $B$ is bounded,
and hence \cite[Corollary~4.3]{T09} implies that $z\in\ov{W^2(\cM)}$.
If $A$ is bounded, then $B$ is bounded, and again $z\in\ov{W^2(\cM)}$.
For the rest of the proof assume that $\dim\cH_1\ge2$, $\dim\cH_2\ge2$
and that $A$ is unbounded.

It follows from \eqref{form1} and \eqref{form2} that
\begin{align}
  \fra[x_n]-z\|x_n\|^2 + \langle y_n,B^*x_n\rangle \to 0,
  \label{ul1} \\[0.5ex]
  -\langle B^*x_n,y_n\rangle + \frd[y_n] -z\|y_n\|^2 \to 0.
  \label{ul2}
\end{align}

First we consider the case when $z\in\CC\backslash\RR$.
Taking the imaginary parts of the left and the right-hand sides of \eqref{ul1} and \eqref{ul2}
we obtain
\[
  -\Im z\|x_n\|^2 + \Im\langle y_n,B^*x_n\rangle \to 0 \quad\text{and}\quad
  -\Im\langle B^*x_n,y_n\rangle-\Im z\|y_n\|^2 \to 0.
\]
If we take the difference and observe that $\Im z\ne0$, we get $\|x_n\|-\|y_n\|\to0$ and thus
\begin{equation}\label{xy}
  \|x_n\|\to\frac{1}{\sqrt{2}\,} \qquad\text{and}\qquad \|y_n\|\to\frac{1}{\sqrt{2}\,}\,.
\end{equation}
Lemma~\ref{le:approx_es}\,(i) implies that $\fra[x_n]$ and $\langle B^*x_n,y_n\rangle$
are bounded.  By \eqref{ul2} also $\frd[y_n]$ is bounded.
From \eqref{ul1}, \eqref{ul2} and \eqref{xy} it follows that
\begin{equation}\label{mcon}
  \bigl(\cM_{x_n,y_n} - z\bigr)\begin{pmatrix} \|x_n\| \\[1ex] \|y_n\| \end{pmatrix}
  = \begin{pmatrix} \dfrac{\fra[x_n]-z\|x_n\|^2+\langle y_n,B^*x_n\rangle}{\|x_n\|} \\[3ex]
  \dfrac{-\langle B^*x_n,y_n\rangle+\frd[y_n]-z\|y_n\|^2}{\|y_n\|} \end{pmatrix}
  \to 0.
\end{equation}
Since all entries of $\cM_{x_n,y_n}$ are bounded, \eqref{mcon} and \eqref{xy}
imply that
\[
  \det(\cM_{x_n,y_n}-z) \to 0.
\]
Hence there exists a sequence $z_n\in\sigma(\cM_{x_n,y_n})\subset W^2(\cM)$
such that $z_n\to z$, which shows that $z\in\ov{W^2(\cM)}$.

Now let $z\in\RR$.  Taking the sum of the real parts of the left-hand sides
of \eqref{ul1} and \eqref{ul2} we obtain
\[
  \fra[x_n]-z\|x_n\|^2+\frd[y_n]-z\|y_n\|^2 \to 0.
\]
If $z<\mu_-$, i.e.\ $D$ is bounded and $z<\alpha_-$ and $z<\delta_-$, then
\begin{align*}
  &\fra[x_n]-z\|x_n\|^2+\frd[y_n]-z\|y_n\|^2
  \le (\alpha_--z)\|x_n\|^2 + (\delta_--z)\|y_n\|^2
  \\[0.5ex]
  &\le \max\bigl\{\alpha_--z,\delta_--z\bigr\}\bigl(\|x_n\|^2+\|y_n\|^2\bigr)
  =  \max\bigl\{\alpha_--z,\delta_--z\bigr\} < 0,
\end{align*}
which is a contradiction.

If $\delta_-\le z\le\delta_+$, then $z\in\ov{W(D)}$;
if $z\ge\alpha_-$, then $z\in\ov{W(A)}$ since we assumed that $A$ is unbounded.
In both cases it follows from Lemma~\ref{le:qnrnr} that $z\in\ov{W^2(\cM)}$.

Finally, assume that $z\in(\delta_++b_0,\alpha_-)$.
Since $z\in\UU$ in this case, we have $\liminf_{n\to\infty}\|x_n\|>0$
by Lemma~\ref{le:approx_es}\,(iii).
If $y_{n_k}\to0$ for a subsequence $y_{n_k}$, then \eqref{ul1} implies
that $\fra[x_{n_k}]-z\|x_{n_k}\|^2\to0$, which is
a contradiction to the fact that $\|x_{n_k}\|\to1$ and $z<\alpha_-$.
Hence also $\liminf_{n\to\infty}\|y_n\|>0$ and we can argue as in the
case $z\in\CC\backslash\RR$ to obtain that $z\in\ov{W^2(\cM)}$.
\end{proof}

The next proposition shows that, essentially, the spectrum of $\cM$ is
contained in the closure of the quadratic numerical range.
Only in the interval $(\delta_+,\delta_++b_0)$ we are not able to prove
such a spectral inclusion.
For other types of block operator matrices results about spectral inclusion were
shown in many papers; see, e.g.\ \cite[Theorem~2.1]{LT98}, \cite[Theorem~2.3]{LMMT01}
and \cite[Theorem~4.2]{T09}.

\begin{proposition}\label{pr:spec_incl}
Suppose that Assumption~{\rm\ref{assump}} is satisfied and
let $\cM$ be the operator as in Theorem~{\rm\ref{jsaness}}.
Moreover, let $z\in\CC\backslash(\delta_+,\delta_++b_0)$.
Then $z\in\sigma(\cM)$ implies that $z\in\ov{W^2(\cM)}$.
\end{proposition}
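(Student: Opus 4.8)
The plan is to prove the contrapositive: assuming $z\in\CC\backslash(\delta_+,\delta_++b_0)$ with $z\notin\ov{W^2(\cM)}$, I would show that $z\in\rho(\cM)$. Since $\cM$ is closed, it suffices to check that $\cM-z$ is a bijection from $\dom(\cM)$ onto $\cH$; the inverse is then automatically bounded by the closed graph theorem. So the three points to verify are that $\ran(\cM-z)$ is closed, that $\ker(\cM-z)=\{0\}$, and that $\ran(\cM-z)$ is dense in $\cH$.

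Closedness of the range is exactly the conclusion of Lemma~\ref{appB}, which applies precisely because $z\notin(\delta_+,\delta_++b_0)$ and $z\notin\ov{W^2(\cM)}$. Injectivity is immediate from Lemma~\ref{qnrps}: since $\sigmap(\cM)\subset W^2(\cM)\subset\ov{W^2(\cM)}$ and $z\notin\ov{W^2(\cM)}$, we have $z\notin\sigmap(\cM)$, i.e.\ $\ker(\cM-z)=\{0\}$.

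For density of the range I would use that $\cM$ is self-adjoint in the Krein space, that is, $\cM^*=J\cM J$ with $J$ the involution from \eqref{defJ}. Then $\cM^*-\ov z=J(\cM-\ov z)J$, so $\ker(\cM^*-\ov z)=J\ker(\cM-\ov z)$, and since $\cM$ is densely defined we get $\ran(\cM-z)^\perp=\ker\bigl((\cM-z)^*\bigr)=\ker(\cM^*-\ov z)$. Now $W^2(\cM)$, and hence $\ov{W^2(\cM)}$, is symmetric with respect to the real axis, so $z\notin\ov{W^2(\cM)}$ gives $\ov z\notin\ov{W^2(\cM)}$, and Lemma~\ref{qnrps} applied at $\ov z$ yields $\ov z\notin\sigmap(\cM)$, i.e.\ $\ker(\cM-\ov z)=\{0\}$. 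Therefore $\ran(\cM-z)^\perp=\{0\}$, so $\ran(\cM-z)$ is dense; being also closed, it equals $\cH$. Together with injectivity this shows $z\in\rho(\cM)$.

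I do not expect any genuine obstacle in this proof: all the substantive work — the analysis of approximate eigensequences and of the $2\times2$ matrices $\cM_{x_n,y_n}$ — has already been carried out in Lemma~\ref{appB}, and what remains is the standard ``closed range $+$ trivial kernel $+$ trivial deficiency $\Rightarrow$ invertibility'' packaging, with $J$-self-adjointness used to convert the deficiency condition at $z$ into a point-spectrum condition at $\ov z$. The only mild point of care is to invoke $\sigmap(\cM)\subset W^2(\cM)$ from Lemma~\ref{qnrps} rather than any statement about $\sigma(\cM)$ itself, since $\sigma(\cM)$ is exactly what this proposition is meant to locate.
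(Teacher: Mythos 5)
Your proof is correct and follows essentially the same route as the paper's: Lemma~\ref{appB} for closedness of the range, Lemma~\ref{qnrps} for $\ker(\cM-z)=\{0\}$, and a vanishing-deficiency argument to conclude $z\in\rho(\cM)$. The only (harmless, arguably cleaner) variant is in the deficiency step: the paper applies Lemma~\ref{qnrps} directly to $\cM^*$ to conclude $\nul(\cM^*-\ov z)=0$, whereas you route this through the $J$-symmetry $\cM^*=J\cM J$ and the real-axis symmetry of $W^2(\cM)$, applying Lemma~\ref{qnrps} to $\cM$ at $\ov z$ instead, thereby avoiding any need to identify $W^2(\cM^*)$ or re-check that $\cM^*$ satisfies Assumption~\ref{assump}.
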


\begin{proof}
Assume that $z\notin\ov{W^2(\cM)}$.
It follows from Lemma~\ref{appB} that $\ran(\cM-z)$ is closed.
Moreover, Lemma~\ref{qnrps} applied to $\cM$ and $\cM^*$
yields $\nul(\cM- z)=0$ and $\nul(\cM^*-\ov{z})=0$.
The latter implies that $\defect(\cM-z)=0$;
see, e.g.\ \cite[Theorem~IV.5.13]{katopert}.  Hence $z\in\rho(\cM)$.
\end{proof}

The next theorem shows that the spectrum of $\cM$ is contained in $\BB$.

\begin{theorem}\label{th:specinB}
Suppose that Assumption~{\rm\ref{assump}} is satisfied, let $\cM$ be the operator
as in Theorem~{\rm\ref{jsaness}} and let $\BB$ as in \eqref{defB}.
Then $\sigma(\cM)\subset\BB$.
In particular, if condition \condA{} is satisfied, then $\sigma(\cM)\subset\mathbb{R}$.
\end{theorem}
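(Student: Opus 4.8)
The plan is to read the enclosure off from the two inclusions already established: the spectral inclusion $\sigma(\cM)\subset\ov{W^2(\cM)}$ of Proposition~\ref{pr:spec_incl} and the enclosure $\ov{W^2(\cM)}\subset\BB$ of Proposition~\ref{numrange}. The only point requiring care is that Proposition~\ref{pr:spec_incl} excludes the interval $(\delta_+,\delta_++b_0)$, so that interval must be dealt with separately.

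First I would take $z\in\sigma(\cM)$ with $z\notin(\delta_+,\delta_++b_0)$; then Proposition~\ref{pr:spec_incl} gives $z\in\ov{W^2(\cM)}$ and Proposition~\ref{numrange} gives $z\in\BB$. It then remains to check that every point of the (possibly empty) open interval $(\delta_+,\delta_++b_0)$ already belongs to $\BB$, so that no spectral analysis is needed there. This is a direct inspection of Definition~\ref{def:Betc}: since the fixed pair $a,b$ satisfies \eqref{domcons}, we have $b\ge b_0$ by \eqref{defb0}, hence $\mu=\delta_++b+\sqrt{(b\delta_++b^2+a)_+}\ge\delta_++b\ge\delta_++b_0$, so that $(\delta_+,\delta_++b_0)\subset(\delta_+,\mu]$; and in each of the four cases of \eqref{defB} one has $(\delta_+,\mu]\subset\BB$: when $D$ is bounded this holds because $\mu_-=\min\{\alpha_-,\delta_-\}\le\delta_-\le\delta_+$, so $(\delta_+,\mu]$ is contained in $[\mu_-,\mu]$ or in $[\mu_-,\infty)$; when $D$ is unbounded it holds because $\BB$ then contains either $(-\infty,\mu]$ or all of $\RR$. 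Combining the two cases yields $\sigma(\cM)\subset\BB$.

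For the last assertion I would invoke Remark~\ref{re:pos_gap}\,(vii): if \condA{} holds then $\eta=0$, so $\BBnr$ drops out of \eqref{defB} and $\BB$ is, in either sub-case, a union of real intervals; hence $\sigma(\cM)\subset\BB\subset\RR$. I do not anticipate a genuine obstacle here, since all the analytic content is already contained in Propositions~\ref{numrange} and \ref{pr:spec_incl}; the only thing one must not overlook is precisely that the interval $(\delta_+,\delta_++b_0)$ omitted by Proposition~\ref{pr:spec_incl} is automatically inside $\BB$.
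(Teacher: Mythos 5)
Your argument is correct and follows the paper's own route exactly: combine Propositions~\ref{pr:spec_incl} and \ref{numrange} for $z\notin(\delta_+,\delta_++b_0)$, and verify directly from Definition~\ref{def:Betc} that $(\delta_+,\delta_++b_0)\subset\BB$ (the paper uses precisely the inequalities $\mu_-\le\delta_+$ and $\delta_++b_0\le\delta_++b\le\mu$). One tiny simplification: the ``in particular'' claim does not really require Remark~\ref{re:pos_gap}\,(vii) — when \condA{} holds, \eqref{defB} defines $\BB$ directly as a union of real intervals with no reference to $\BBnr$, so $\BB\subset\RR$ is immediate from the definition.
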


\begin{proof}
Let $z\in\sigma(\cM)$.
If $z\in\CC\backslash(\delta_+,\delta_++b_0)$,
then $z\in\ov{W^2(\cM)}\subset\BB$ by Propositions~\ref{pr:spec_incl} and \ref{numrange}.
If $z\in(\delta_+,\delta_++b_0)$, then $z\in\BB$ since
$\mu_- \le \delta_+$ and $\delta_++b_0 \le \delta_++b \le \mu$.
\end{proof}

When $B$ is a bounded operator, then $\eta$, which bounds the imaginary parts of
spectral points, is given by \eqref{etaBbounded};
this was proved in \cite[Theorem~5.5\,(iii)]{T09}.

The above theorem shows that the spectrum is real provided the spectra of the
diagonal components are sufficiently separated and $B$ is not ``too large''.
As the following result shows,
this can be particularly straightforward when $B$ is bounded;
see also \cite[Proposition~2.6.8]{tretbook} and \cite[Theorem~5.5]{T09}.

In the next corollary, which follows immediately from Theorem~\ref{th:specinB}
and Remark~\ref{re:pos_gap}\,(viii), we consider the situation when $B$ is bounded.
The estimate for the imaginary part in \eqref{Bbdd_specincl2} was also proved
in \cite[Theorem~5.5]{T09}.
A slightly better enclosure for $\sigma(\cM)$ than \eqref{Bbdd_specincl1}
was obtained in \cite[Theorem~5.8]{AMS09} and \cite[Theorem~5.4]{AMT10}.

\begin{corollary}\label{co:boundedB}
Suppose that Assumption~{\rm\ref{assump}} is satisfied and that $B$ is bounded.
If
\[
  \|B\| \le \frac{\alpha_--\delta_+}{2}\,,
\]
then
\begin{equation}\label{Bbdd_specincl1}
  \sigma(\cM) \subset \bigl(-\infty,\delta_++\|B\|\,\bigr]\cup\bigl[\alpha_--\|B\|,\infty\bigr).
\end{equation}
Otherwise,
\begin{equation}\label{Bbdd_specincl2}
\begin{aligned}
  \sigma(\cM) \subset \RR\cup\Biggl\{z\in\CC\backslash\RR:\;\;
  & \alpha_--\|B\| \le \Re z \le \delta_++\|B\|,
  \\[1ex]
  & |\Im z| \le \sqrt{\|B\|^2-\Bigl(\Bigl(\frac{\alpha_--\delta_+}{2}\Bigr)_+\Bigr)^2}
  \,\Biggr\}.
\end{aligned}
\end{equation}
If $D$ is bounded with $\delta_-=\min\sigma(D)$, then $(-\infty,\delta_-)\subset\rho(\cM)$
and $\Re z\ge\frac{\alpha_-+\delta_-}{2}$ for $z\in\sigma(\cM)\backslash\RR$.
\end{corollary}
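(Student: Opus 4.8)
The plan is to derive the corollary by specialising Theorem~\ref{th:specinB} (which gives $\sigma(\cM)\subset\BB$) and the explicit description of $\BB$ in Definition~\ref{def:Betc} to the case of bounded $B$, for which the constants entering everything can be chosen concretely. First I would observe that $\|B^*x\|^2\le\|B\|^2\|x\|^2$ for all $x\in\cH_1$, so \eqref{domcons} holds with $a=\|B\|^2=\|B^*\|^2$ and $b=0$, whence $b_0=0$. With these values Remark~\ref{re:pos_gap}\,(viii) already records the relevant quantities: $\mu=\delta_++\|B\|$, $\xi_1=\mu_+=\alpha_--\|B\|$ (see \eqref{muBbounded}) and $\eta=\sqrt{(\|B\|^2-((\tfrac{\alpha_--\delta_+}{2})_+)^2)_+}$ (see \eqref{etaBbounded}). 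Since $b=0$, condition \eqref{condA1} is never satisfied, so \condA{} is equivalent to \eqref{condA2}, which with these constants reads precisely $\|B\|\le\tfrac{\alpha_--\delta_+}{2}$.

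Then I would split into the two cases. If $\|B\|\le\tfrac{\alpha_--\delta_+}{2}$, then \condA{} holds, $\eta=0$, and by \eqref{defB} the set $\BB$ equals $[\mu_-,\mu]\cup[\mu_+,\infty)$ when $D$ is bounded and $(-\infty,\mu]\cup[\mu_+,\infty)$ when $D$ is unbounded; in both cases $\BB\subset(-\infty,\delta_++\|B\|\,]\cup[\alpha_--\|B\|,\infty)$, so Theorem~\ref{th:specinB} yields \eqref{Bbdd_specincl1}. If $\|B\|>\tfrac{\alpha_--\delta_+}{2}$, then \condA{} fails and $\BB$ equals $[\mu_-,\infty)\cup\BBnr$ (resp.\ $\RR\cup\BBnr$), whose non-real part is $\{z:\xi_-\le\Re z\le\mu,\ |\Im z|\le\eta\}$; since $\xi_-\ge\xi_1=\alpha_--\|B\|$ and $\mu$, $\eta$ are as above, Theorem~\ref{th:specinB} gives \eqref{Bbdd_specincl2} (note that the bound asserted there is the weaker $\alpha_--\|B\|\le\Re z$ rather than $\xi_-\le\Re z$, so the estimate $\xi_-\ge\xi_1$ is all one needs).

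Finally, the refinements when $D$ is bounded come from tracking $\delta_-=\min\sigma(D)$ through the definitions, where it enters only via $\mu_-=\min\{\alpha_-,\delta_-\}$ and $\xi_2=\tfrac{\alpha_-+\delta_-}{2}$: inspecting the $D$-bounded cases of \eqref{defB} shows $\sigma(\cM)\cap(-\infty,\delta_-)=\emptyset$ and that every non-real point of $\BB$ has $\Re z\ge\xi_-\ge\xi_2=\tfrac{\alpha_-+\delta_-}{2}$, giving $\Re z\ge\tfrac{\alpha_-+\delta_-}{2}$ for $z\in\sigma(\cM)\setminus\RR$; for the first assertion one may also argue directly via Theorem~\ref{specequiv}, using that $b_0=0$ makes $(-\infty,\delta_-)\subset\UU$ and that $\frs(z)[x]\ge\fra[x]-z\|x\|^2$ there because $B(D-z)^{-1}B^*\ge0$, so $S(z)$ is boundedly invertible. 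There is no genuine analytic obstacle once Theorem~\ref{th:specinB} is in hand: the only real work is the routine bookkeeping of matching the pieces of $\BB$ to the stated intervals, the one mildly delicate point being the collapse of \condA{} to the single inequality $\|B\|\le(\alpha_--\delta_+)/2$ when $b=0$.
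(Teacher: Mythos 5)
Your choice $a=\|B\|^2$, $b=0$ is exactly the paper's device, and your case analysis of $\BB$ via Remark~\ref{re:pos_gap}\,(viii) correctly establishes \eqref{Bbdd_specincl1} and \eqref{Bbdd_specincl2}; this is in fact more thorough than the paper's one-line proof, which only records that \eqref{condA2} holds in the first case and leaves the rest to the reader.

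The treatment of the final assertion, $(-\infty,\delta_-)\subset\rho(\cM)$, does not close, however. Theorem~\ref{th:specinB} only excludes $(-\infty,\mu_-)$ from $\sigma(\cM)$, and $\mu_-=\min\{\alpha_-,\delta_-\}$ coincides with $\delta_-$ precisely when $\alpha_-\ge\delta_-$. That inequality is automatic under \condA{} (hence in the first branch of the corollary, where there is no non-real spectrum anyway), but it can fail in the ``otherwise'' branch, in which case $[\alpha_-,\delta_-)\subset\BB$ and the inspection of \eqref{defB} yields nothing about that interval. Your alternative route via Theorem~\ref{specequiv} has exactly the same reach: for $z<\delta_-$ one has $(D-z)^{-1}\ge0$, so $\frs(z)[x]\ge\fra[x]-z\|x\|^2\ge(\alpha_--z)\|x\|^2$, which is uniformly positive only for $z<\alpha_-$, not for all $z<\delta_-$. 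Both arguments therefore prove only $(-\infty,\min\{\alpha_-,\delta_-\})\subset\rho(\cM)$. The gap is genuine: with $\cH_1=\cH_2=\CC$, $A=0$, $D=1$, $B=\tfrac14$ one has $\alpha_-=0<1=\delta_-$ and $\sigma(\cM)=\bigl\{\tfrac12\pm\tfrac{\sqrt3}{4}\bigr\}\subset(0,1)\subset(-\infty,\delta_-)$. You should either flag the extra hypothesis $\alpha_-\ge\delta_-$ or weaken the conclusion to $(-\infty,\min\{\alpha_-,\delta_-\})\subset\rho(\cM)$, rather than present the bookkeeping as if it delivered $(-\infty,\delta_-)$.
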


\begin{proof}
Since $B$ is bounded, we can chose $a=\|B\|^2$ and $b=0$.  Under our assumptions
the inequality \eqref{condA2} is satisfied.
Hence \eqref{Bbdd_specincl1} holds by Theorem~\ref{th:specinB} and
the definition of $\BB$.
\end{proof}

\begin{remark}
Even if $B$ is bounded, it may be possible to choose $a$ and $b$ such that $b>0$
to obtain better enclosures for the spectrum, in particular if $\mu_+=\alpha_-$
with such a choice; see Remark~\ref{re:pos_gap}\,(iv).
\end{remark}

\begin{remark}
Let us consider the family of operators
\[
  \cM_t \defeq \begin{pmatrix} A & tB \\[0.5ex] -tB^* & D \end{pmatrix},
  \qquad t\in[0,\infty),
\]
which was also studied in \cite{LLT02}.
Clearly, if Assumption~\ref{assump} is satisfied for $t=1$, then it is
satisfied for all $t\in[0,\infty)$.
If $\delta_+<\alpha_-$, i.e.\ the spectra of $A$ and $D$ are separated,
then there exists a $t_0>0$ such that, for $t\in[0,t_0]$, condition~\condA{}
is satisfied and hence $\sigma(\cM_t)\subset\RR$.
If $\delta_+\ge\alpha_-$, it may happen that the spectrum of $\cM$ is non-real
for every positive $t$.

If $\delta_+<\alpha_-$, then, in general, the gap $(\delta_+,\alpha_-)$ in the spectrum
closes from both endpoints with increasing $t$.
However, if, e.g.\ $\alpha_-=0$ and $a=0$, $b>0$ in \eqref{domcons},
then $\mu_+=\alpha_-$ as long as \eqref{condA1} is satisfied, i.e.\
the gap closes only from the left endpoint.

If $D$ is bounded and $\delta_-=\min\sigma(D)$, then for all $t\in[0,\infty)$,
the set $\sigma(\cM_t)\cap\RR$
is bounded from below by $\min\{\alpha_-,\delta_-\}$
and the real parts of points from $\sigma(\cM_t)\backslash\RR$ are bounded
from below by $\frac{\alpha_-+\delta_-}{2}$\,.
\end{remark}

In the next section we characterise elements from $\sigma(\cM)$ in the interval $(\mu,\infty)$
with variational principles.  Since the proof uses the Schur complement, we must
ensure that $S$ and $\cM$ have the same essential spectrum in $(\mu,\infty)$.
Note that $(\mu,\infty)\subset\UU$ and hence $S(\lambda)$ is well defined
for $\lambda\in(\mu,\infty)$.

\begin{theorem}\label{th:sess_equiv}
Suppose that Assumption~{\rm\ref{assump}} is satisfied, let $\cM$ be the operator
as in Theorem~{\rm\ref{jsaness}}, let $S$ be its Schur complement and
let $\mu$ be as in \eqref{defmu}.  Then
\begin{equation}\label{essspeq}
  \sigmaess(S)\cap(\mu,\infty) = \sigmaess(\cM)\cap(\mu,\infty).
\end{equation}
\end{theorem}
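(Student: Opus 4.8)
The plan is to characterise membership in both essential spectra via singular (Weyl) sequences and to translate between the two descriptions using Lemmas~\ref{le:Sz_T_z} and~\ref{le:approx_es}. First observe that $(\mu,\infty)\subset\UU$: for $\lambda>\mu$ one has $\dist(\lambda,\sigma(D))=\lambda-\delta_+>\mu-\delta_+\ge b\ge b_0$ by \eqref{defmu}, so $S(\lambda)$ is defined there. For real $\lambda\in\UU$ the resolvent $(D-\lambda)^{-1}$ is self-adjoint, hence $\langle(D-\lambda)^{-1}B^*x,B^*y\rangle$ is a symmetric form and so is $\frs(\lambda)$; by Lemma~\ref{forms} together with the estimate \eqref{est_DB} (with $b<\lambda-\delta_+$), $\frs(\lambda)$ is bounded from below, so $S(\lambda)$ is self-adjoint and bounded from below. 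Consequently $\lambda\in\sigmaess(S)$, i.e.\ $0\in\sigmaess(S(\lambda))$, if and only if there is an orthonormal sequence $(x_n)\subset\dom(S(\lambda))$ with $S(\lambda)x_n\to0$ (Weyl's criterion); equivalently, if and only if there is $(x_n)\subset\dom(\fra)$ with $\|x_n\|=1$, $x_n\rightharpoonup0$ and $\sup\{|\frs(\lambda)[x_n,\xi]|:\xi\in\dom(\fra),\ \|\xi\|_{\frs(\lambda)}\le1\}\to0$ (the form version of Weyl's criterion for a semibounded self-adjoint operator, $\|\cdot\|_{\frs(\lambda)}$ being the form norm). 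On the other hand, since $\cM$ is $J$-self-adjoint, $\cM^*-\lambda=J(\cM-\lambda)J$ for real $\lambda$, so $\nul(\cM^*-\lambda)=\nul(\cM-\lambda)$ and $\cM-\lambda$ is Fredholm precisely when its range is closed and its nullity is finite; hence $\lambda\in\sigmaess(\cM)$ if and only if there is $(x_n,y_n)^T\in\dom(\cM)$ with $\|x_n\|^2+\|y_n\|^2=1$, $(x_n,y_n)^T\rightharpoonup0$ and $(\cM-\lambda)(x_n,y_n)^T\to0$.

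\emph{The inclusion $\sigmaess(S)\cap(\mu,\infty)\subset\sigmaess(\cM)$.} Let $\lambda\in(\mu,\infty)$ with $0\in\sigmaess(S(\lambda))$ and pick an orthonormal $(x_n)\subset\dom(S(\lambda))$ with $S(\lambda)x_n\to0$. From $\langle S(\lambda)x_n,x_n\rangle=\frs(\lambda)[x_n]$ and \eqref{est_DB} (with $b<\lambda-\delta_+$) one obtains that $\fra[x_n]$, and hence $\|B^*x_n\|$, is bounded. By Lemma~\ref{le:Sz_T_z}\,(i) the vectors $\binom{x_n}{(D-\lambda)^{-1}B^*x_n}$ lie in $\dom(\cM)$ and are mapped by $\cM-\lambda$ to $\binom{S(\lambda)x_n}{0}\to0$; their norms are bounded above (by the bound on $\|B^*x_n\|$) and below (by $\|x_n\|=1$), and since $B^*(A-\nu)^{-\frac12}$ is bounded they converge weakly to $0$ together with $x_n$. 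After normalisation they form a singular Weyl sequence for $\cM-\lambda$, so $\lambda\in\sigmaess(\cM)$.

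\emph{The inclusion $\sigmaess(\cM)\cap(\mu,\infty)\subset\sigmaess(S)$.} Let $\lambda\in(\mu,\infty)\cap\sigmaess(\cM)$ and take $(x_n,y_n)^T\in\dom(\cM)$ normalised, weakly null, with $(\cM-\lambda)(x_n,y_n)^T\to0$. Because $\lambda\in\UU$, Lemma~\ref{le:approx_es}\,(i),(iii) yields that $\fra[x_n]$ is bounded, that $\liminf_n\|x_n\|>0$, that $x_n\in\dom(\fra)=\dom(\frs(\lambda))$, and---the crucial point---that $\frs(\lambda)[x_n,\xi_n]\to0$ for \emph{every} sequence $(\xi_n)$ for which $(\xi_n)$ and $(B^*\xi_n)$ are bounded. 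By a routine subsequence argument this last statement is equivalent to $\sup\{|\frs(\lambda)[x_n,\xi]|:\xi\in\dom(\fra),\ \|\xi\|\le1,\ \|B^*\xi\|\le1\}\to0$, and by \eqref{domcons} the set $\{\xi:\|\xi\|\le1,\ \|B^*\xi\|\le1\}$ contains a fixed positive multiple of the unit ball of $\|\cdot\|_{\frs(\lambda)}$; hence $\sup\{|\frs(\lambda)[x_n,\xi]|:\|\xi\|_{\frs(\lambda)}\le1\}\to0$. Passing to a subsequence along which $\|x_n\|$ converges (necessarily to a positive limit) and normalising, I obtain a sequence meeting the form version of Weyl's criterion for $S(\lambda)$, whence $0\in\sigmaess(S(\lambda))$, i.e.\ $\lambda\in\sigmaess(S)$.

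\emph{Main obstacle.} The delicate direction is the second one: a Weyl sequence $(x_n,y_n)^T$ for $\cM-\lambda$ has first components $x_n$ that lie only in the form domain $\dom(\fra)$, not in $\dom(S(\lambda))$, so one cannot test directly with $S(\lambda)x_n$. This is precisely why the full strength of Lemma~\ref{le:approx_es}\,(iii)---the vanishing of $\frs(\lambda)[x_n,\xi_n]$ for \emph{varying} $\xi_n$, and not merely of $\frs(\lambda)[x_n]$---is required, together with the comparison of the constraint $\|\xi\|\le1,\ \|B^*\xi\|\le1$ from \eqref{domcons} with the form norm. The loss of self-adjointness of $\cM$ enters only in the routine reduction ``$\cM-\lambda$ not Fredholm $\Rightarrow$ a normalised weakly null Weyl sequence exists,'' which is handled through $\cM^*-\lambda=J(\cM-\lambda)J$.
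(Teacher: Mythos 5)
Your forward inclusion follows the paper's route: both pass a Weyl sequence for $S(\lambda)$ through Lemma~\ref{le:Sz_T_z}\,(i) to produce a singular sequence for $\cM-\lambda$, the only difference being the argument for $y_n\rightharpoonup0$ (you factor through $(A-\nu)^{1/2}$, the paper tests against the dense set $\dom(B(D-\lambda)^{-1})$). The reverse inclusion is where you genuinely diverge, and both arguments work. The paper argues by contradiction: assuming $0\notin\sigmaess(S(z))$ it shows that $\ran(\cM-z)$ cannot be closed (via the Fredholm index stability under the complex perturbations $z_n=z+i/n$ which lie in $\rho(\cM)$ by Theorem~\ref{th:specinB}), extracts a Kato IV.5.2 sequence orthogonal to $\ker(\cM-z)$, and then applies Lemma~\ref{le:approx_es}\,(iii) with the hand-picked test vectors $\xi_n=\tilde S(z)^{-1}\tilde x_n$ (using that $0\in\sigmadis(S(z))$ makes the reduced Schur complement boundedly invertible), followed by Lemma~\ref{le:approx_es}\,(iv), to produce a nonzero limit vector in $\ker(\cM-z)$ and reach a contradiction with orthogonality. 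You instead argue directly: starting from a weakly-null Weyl sequence for $\cM-\lambda$ (which requires the reduction via $\cM^*-\lambda=J(\cM-\lambda)J$ and, for the case of non-closed range, a weak-compactness argument you gloss over but which is routine), you use the full strength of Lemma~\ref{le:approx_es}\,(iii) with \emph{arbitrary} bounded test families, translate the constraint $\{\|\xi\|\le1,\ \|B^*\xi\|\le1\}$ into a form-norm ball via \eqref{domcons} and \eqref{inequs}, and conclude with a form version of Weyl's criterion for semibounded self-adjoint operators. That criterion is correct (one verifies it by testing against $T^{-1}u_n$, respectively the reduced inverse, in the two possible non-essential cases), but it is not entirely textbook and deserves either a proof or a precise citation. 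What your route buys is directness: you avoid the index-perturbation step and the delicate choice $\xi_n=\tilde S(z)^{-1}\tilde x_n$; what it costs is the importation of the form-Weyl criterion and the weak-null reduction, both of which are glossed as ``routine'' but each carry a small burden of verification.
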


\begin{proof}
Let $z\in\sigmaess(S)\cap(\mu,\infty)$.
Since $0\in\sigmaess(S(z))$ and $S(z)$ is self-adjoint, the operator $S(z)$ is
not semi-Fredholm with $\nul(S(z))<\infty$.  By \cite[Theorem~IX.1.3]{EdmundsEvans}
there exists a singular sequence for $S(z)$ corresponding to $0$,
i.e.\ there exist $x_n\in\dom(S(z))$, $n\in\NN$, such that
\[
  \|x_n\|=1, \qquad S(z)x_n \to 0, \qquad x_n \rightharpoonup 0.
\]
Set
\[
  y_n \defeq (D-z)^{-1}B^*x_n, \qquad
  w_n \defeq \binom{x_n}{y_n}, \qquad
  \qquad\text{and}\qquad
  \hat w_n\defeq \frac{w_n}{\|w_n\|}\,.
\]
From Lemma~\ref{le:Sz_T_z}\,(i) we obtain that $\hat w_n\in\dom(\cM)$ and
\[
  (\cM-z)\hat w_n = \frac{1}{\|w_n\|}\begin{pmatrix} S(z)x_n \\[0.5ex] 0 \end{pmatrix}
  \to 0;
\]
note that $\|w_n\|\ge1$.
Moreover, for $u$ in the dense set $\dom(B(D-z)^{-1})$ we have
\[
  \langle y_n,u\rangle = \bigl\langle x_n,B(D-z)^{-1}u\bigr\rangle \to 0.
\]
Since $y_n$ is bounded by Lemma~\ref{le:approx_es}\,(i), we have $y_n\rightharpoonup0$
and therefore $\hat w_n\rightharpoonup0$.
Hence $\hat w_n$ is a singular sequence for $\cM$ corresponding to $z$.
Again from \cite[Theorem~IX.1.3]{EdmundsEvans} we obtain that $z\in\sigmaess(\cM)$.
This shows the inclusion ``$\subset$'' in \eqref{essspeq}.

Now let $z\in\sigmaess(\cM)\cap(\mu,\infty)$ and suppose that $z\notin\sigmaess(S)$.
It follows from Theorem~\ref{specequiv} that $z\in\sigma(S)$ and that
\begin{equation}\label{nulnon}
  0 < \nul(S(z)) = \nul(\cM-z) < \infty.
\end{equation}
Since $S(z)$ is self-adjoint, we also have $0\in\sigmadis(S(z))$.
Suppose that $\cM-z$ has closed range.  Then $\cM-z$ is semi-Fredholm with $\defect(\cM-z)=\infty$.
Let $z_n=z+i/n$, $n\in\NN$.  Then $z_n\to z$, $z_n\notin\BB$ and
hence $z_n\in\rho(\cM)$ by Theorem~\ref{th:specinB}.
It follows from \cite[Theorem~IV.5.17]{katopert} that
$\ind(\cM-z_n)=\ind(\cM-z)$ for large enough $n$, which is a contradiction
since $\ind(\cM-z_n)=0$ for all $n\in\mathbb{N}$ and $\ind(\cM-z)=-\infty$.
Hence $\ran(\cM-z)$ is not closed.
Therefore, by \cite[Theorem~IV.5.2]{katopert}, there exists a sequence
of vectors $(x_n,y_n)^T\in\dom(\cM)$ with
\begin{equation}\label{perpvec}
  \binom{x_n}{y_n} \perp \ker(\cM-z) \quad\text{and}\quad
  \|x_n\|^2+\|y_n\|^2 = 1 \quad\text{for each $n\in\NN$}
\end{equation}
such that
\[
  \binom{u_n}{v_n} \defeq
  (\cM-z)\binom{x_n}{y_n}
  = \begin{pmatrix}
    (A-\nu)\bigl(x_n + \overline{(A-\nu)^{-1}B}y_n\bigr) + (\nu-z)x_n \\[1ex]
    -B^*x_n + (D-z)y_n
  \end{pmatrix} \to 0.
\]
Let $P$ be the orthogonal projection from $\mathcal{H}_1$ onto $\ker(S(z))$,
set $\tilde{x}_n=(I-P)x_n$ and let $\tilde{S}(z)$ be the restriction of $S(z)$
to the Hilbert space $(I-P)\mathcal{H}_1$, which has a bounded inverse
since $0\in\sigmadis(S(z))$.
Set $\xi_n\defeq\tilde{S}(z)^{-1}\tilde x_n$.  Since $B^*\tilde{S}(z)^{-1}$ is a bounded
operator by the closed graph theorem, the assumptions on $\xi_n$
in Lemma~\ref{le:approx_es}\,(iii) are satisfied.
The latter implies that
\[
  \|\tilde x_n\|^2 = \frs(z)\bigl[x_n,\tilde{S}(z)^{-1}\tilde x_n\bigr] \to 0.
\]
Since $\ker(S(z))$ is finite-dimensional, there exists a subsequence $x_{n_k}$
such that $x_{n_k}\to x\in\ker(S(z))$.
It follows from Lemma~\ref{le:approx_es}\,(iv) that $x\in\dom(\fra)$ and $B^*x_{n_k}\to B^*x$.
Hence
\[
  \binom{x_{n_k}}{y_{n_k}} \to \begin{pmatrix} x \\[0.5ex] (D-z)^{-1}B^*x \end{pmatrix}
  \in \ker(\cM-z)
\]
by Lemma~\ref{le:Sz_T_z}.
As this contradicts \eqref{perpvec}, we have $z\in\sigmaess(S)$.
Hence the reverse inclusion in \eqref{essspeq} is also shown.
\end{proof}

\begin{corollary}\label{compres}
If $A$ has compact resolvent, then $(\mu,\infty)\cap\sigmaess(\cM)=\varnothing$.
\end{corollary}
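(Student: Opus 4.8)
The plan is to combine the essential-spectrum equivalence of Theorem~\ref{th:sess_equiv} with the fact that, once $A$ has compact resolvent, the Schur complement $S(\lambda)$ has compact resolvent for every $\lambda\in(\mu,\infty)$; then $\sigmaess(S)\cap(\mu,\infty)=\varnothing$ and the claim follows at once.

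First I would fix $\lambda\in(\mu,\infty)$. Since $\mu\ge\delta_++b\ge\delta_++b_0$ and $\lambda>\delta_+=\max\sigma(D)$, we have $\dist(\lambda,\sigma(D))=\lambda-\delta_+>b_0$, so $\lambda\in\UU$ and $S(\lambda)$ is defined. Because $\lambda$ is real and $\lambda\in\rho(D)$, the operator $(D-\lambda)^{-1}$ is self-adjoint, hence the form $\frs(\lambda)$ of Definition~\ref{def:schur} is symmetric; together with Lemma~\ref{forms} this shows that $\frs(\lambda)$ is a closed, densely defined, symmetric, sectorial form, so $S(\lambda)$ is self-adjoint and bounded below with form domain $\dom(\frs(\lambda))=\dom(\fra)$. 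Moreover, by Lemma~\ref{forms} the perturbing term $\langle(D-\lambda)^{-1}B^*\cdot\,,B^*\cdot\,\rangle$ is $\fra$-form-bounded with relative bound less than one, so there is a $\kappa\in\RR$ for which the norm $x\mapsto\bigl(\frs(\lambda)[x]+\kappa\|x\|^2\bigr)^{1/2}$ on $\dom(\fra)$ is equivalent to the graph norm of $(A-\nu)^{1/2}$ (one direction of this equivalence is exactly \eqref{inequs}).

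The key step is then that $S(\lambda)$ has compact resolvent. Since $A$ has compact resolvent, so does $A-\nu$, hence $(A-\nu)^{-1/2}$ is compact; as $(A-\nu)^{1/2}$ maps $\bigl(\dom(\fra),\text{graph norm}\bigr)$ boundedly into $\cH_1$, the embedding of $\dom(\fra)$, equipped with the graph norm of $(A-\nu)^{1/2}$, into $\cH_1$ is compact. By the norm equivalence above, the embedding of $\dom(\frs(\lambda))$, equipped with its form norm, into $\cH_1$ is compact as well. A self-adjoint operator bounded below whose form domain embeds compactly into the underlying Hilbert space has compact resolvent --- for instance, $(S(\lambda)+\kappa)^{-1}$ factors through $(S(\lambda)+\kappa)^{-1/2}$, which maps $\cH_1$ boundedly into $\dom(\frs(\lambda))$ with its form norm, followed by the compact embedding. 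Thus $\sigma(S(\lambda))$ is discrete, $\sigmaess(S(\lambda))=\varnothing$, and in particular $0\notin\sigmaess(S(\lambda))$.

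Since $\lambda\in(\mu,\infty)$ was arbitrary, we get $\sigmaess(S)\cap(\mu,\infty)=\varnothing$, and Theorem~\ref{th:sess_equiv} then gives $\sigmaess(\cM)\cap(\mu,\infty)=\varnothing$. The main obstacle is the compact-resolvent assertion for $S(\lambda)$: the perturbation of $\fra-\lambda$ defining $\frs(\lambda)$ is in general only relatively form-bounded and not form-compact (it would be form-compact if $B^*(A-\nu)^{-1/2}$ were compact, cf.\ Remark~\ref{re:comp1}), so one cannot argue via a compact perturbation of resolvents; instead one must use that relative bound less than one forces $\frs(\lambda)$ to have the same form domain as $\fra$ with an equivalent topology.
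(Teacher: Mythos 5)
Your proof is correct and takes essentially the same route as the paper: reduce via Theorem~\ref{th:sess_equiv} to the Schur complement, then use the relative form bound $b/(z-\delta_+)<1$ on $(\mu,\infty)$ together with the compact resolvent of $A$ to conclude $S(z)$ has compact resolvent. The only difference is that the paper cites \cite[Theorem~VI.3.4]{katopert} for the statement that a relative form bound less than one preserves compact resolvent, whereas you re-derive that fact directly via the form-norm equivalence and the factorization through the compact embedding of the form domain.
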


\begin{proof}
In view of Theorem~\ref{th:sess_equiv}, it is sufficient to show
that $(\mu,\infty)\cap\sigmaess(S)=\varnothing$.
Let $z\in(\mu,\infty)$ and $x\in\dom(\fra)$.  It follows from \eqref{est_DB} that
\[
  \bigl|\bigl\langle(D-z)^{-1}B^*x,B^*x\bigr\rangle\bigr|
  \le \frac{b}{z-\delta_+}\fra[x]+\frac{a}{z-\delta_+}\|x\|^2,
\]
Since $z-\delta_+>b$, this, together with \cite[Theorem~VI.3.4]{katopert}, implies
that $S(z)$ has compact resolvent.
\end{proof}

Recall that under the extra assumption~\ref{assump}.(I) more can be said
about $\sigmaess(\cM)$; see Proposition~\ref{pr:sessALMS}.

\section{Variational Principles}\label{varprince}

\noindent
In this section we prove variational principles that characterise eigenvalues of the
operator $\cM$ and the Schur complement $S$ in a certain interval.
The functionals in these variational principles are connected either with the Schur complement
or the quadratic numerical range of the operator $\cM$.

First we recall a property of operator functions that was used in
\cite[Lemma~2]{VM74} by A.~Virozub and V.~Matsaev for functions whose
values are bounded operators; see also, e.g.\ \cite{LLMT08,LMM05}.
In \cite{LS} this property was introduced for certain functions whose values
are unbounded operators.
Here we formulate it for families of quadratic forms and apply it then to
holomorphic operator functions of type~(B).

\begin{definition}
Let $\Delta\subset\RR$ be an interval and let $\frt(\lambda)$, $\lambda\in\Delta$,
be a family of
closed symmetric quadratic forms such that $\dom(\frt(\lambda))$ is independent of $\lambda$
and such that $\frt(\cdot)[x]$ is differentiable for each $x\in\dom(\frt(\lambda))$.
We say that $\frt(\cdot)$ satisfies the condition \VM{} on the interval $\Delta$ if,
for each compact subinterval $I\subset\Delta$,
there exist $\eps,\delta>0$ such that, for all $\lambda\in I$ and
all $x\in\dom(\frt(\lambda))$,
\begin{equation}\label{VMimpl}
  \bigl|\frt(\lambda)[x]\bigr| \le \eps\|x\|^2
  \quad\Longrightarrow\quad
  \frt'(\lambda)[x]\le-\delta\|x\|^2.
\end{equation}
\end{definition}

\medskip

The condition implies in particular that if $\lambda_0$ is an inner point
of $\Delta$ and $|\frt(\lambda_0)[x]|$ is small enough, then
$\frt(\cdot)[x]$ must have a zero close to $\lambda_0$.

\begin{lemma}\label{vmlem}
Let $\frs(\lambda)$, $\lambda\in\UU$, be the quadratic forms from
Definition~{\rm\ref{def:schur}} associated with the Schur complement
of the operator $\cM$ and let $\mu$ be as in \eqref{defmu}.
Then $\frs$ satisfies the condition \VM{} on the interval $(\mu,\infty)$.
\end{lemma}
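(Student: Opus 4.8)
The plan is to verify the implication in \eqref{VMimpl} directly from an explicit formula for $\frs'(\lambda)[x]$, combined with the relative bound \eqref{domcons} and the spectral calculus for $D$.

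First I would record the set-up. Recall $(\mu,\infty)\subset\UU$ (since $\mu\ge\delta_++b\ge\delta_++b_0$), so by Lemma~\ref{forms} each $\frs(\lambda)$, $\lambda\in(\mu,\infty)$, is a closed form with $\lambda$-independent domain $\dom(\fra)$, and $\frs(\cdot)[x]$ is holomorphic, hence differentiable, on $(\mu,\infty)$; for real $\lambda$ the operator $(D-\lambda)^{-1}$ is self-adjoint, so $\frs(\lambda)$ is symmetric, and the framework of condition \VM{} applies. Differentiating
\[
  \frs(\lambda)[x] = \fra[x] - \lambda\|x\|^2 + \bigl\langle(D-\lambda)^{-1}B^*x,B^*x\bigr\rangle
\]
with respect to $\lambda$ gives $\frs'(\lambda)[x] = -\|x\|^2 + \|(D-\lambda)^{-1}B^*x\|^2$. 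Since $\sigma(D)\subset(-\infty,\delta_+]$, for $\lambda>\delta_+$ one has $\|(D-\lambda)^{-1}\|\le(\lambda-\delta_+)^{-1}$, so with \eqref{domcons},
\[
  \|(D-\lambda)^{-1}B^*x\|^2 \le \frac{\|B^*x\|^2}{(\lambda-\delta_+)^2} \le \frac{a\|x\|^2+b\fra[x]}{(\lambda-\delta_+)^2},
\]
and $\bigl|\langle(D-\lambda)^{-1}B^*x,B^*x\rangle\bigr|\le\|B^*x\|^2/(\lambda-\delta_+)$ as in \eqref{est_DB}.

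Next, fixing a compact $I=[\lambda_1,\lambda_2]\subset(\mu,\infty)$, $\lambda\in I$, and writing $s\defeq\lambda-\delta_+>\mu-\delta_+\ge b$, I would use the hypothesis $|\frs(\lambda)[x]|\le\eps\|x\|^2$ to bound $\fra[x]$: from $\fra[x]=\frs(\lambda)[x]+\lambda\|x\|^2-\langle(D-\lambda)^{-1}B^*x,B^*x\rangle$ and the bounds above one gets $\fra[x](1-b/s)\le(\eps+\lambda+a/s)\|x\|^2$, i.e.\ $\fra[x]\le\frac{(\eps+\lambda)s+a}{s-b}\|x\|^2$. Substituting this into the estimate for $\frs'(\lambda)[x]$ and simplifying (using $\lambda=\delta_++s$) yields
\[
  \frs'(\lambda)[x] \le \Bigl(-1+\frac{bs+b\delta_++a}{s(s-b)}+\frac{b\eps}{s(s-b)}\Bigr)\|x\|^2.
\]

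The key point is that for $\lambda>\mu$ the first two terms combine to something strictly negative. By \eqref{defmu}, $\mu-\delta_+=b+\sqrt{(b\delta_++b^2+a)_+}$, and a short computation gives $(\mu-\delta_+)^2-2b(\mu-\delta_+)=(b\delta_++b^2+a)_+-b^2\ge b\delta_++a$. Since $t\mapsto t^2-2bt$ is strictly increasing on $(b,\infty)$ and $\mu-\delta_+\ge b$, for $s>\mu-\delta_+$ we obtain $s^2-2bs>b\delta_++a$, equivalently $bs+b\delta_++a<s(s-b)$, so $c(\lambda)\defeq 1-\frac{bs+b\delta_++a}{s(s-b)}>0$ for each $\lambda>\mu$; by continuity $c_0\defeq\min_{\lambda\in I}c(\lambda)>0$. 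Finally I would choose $\eps>0$, depending on $I$, so small that $b\eps/(s_1(s_1-b))\le c_0/2$ with $s_1\defeq\lambda_1-\delta_+$; then $\frs'(\lambda)[x]\le-(c_0/2)\|x\|^2$ whenever $\lambda\in I$ and $|\frs(\lambda)[x]|\le\eps\|x\|^2$, so \eqref{VMimpl} holds with $\delta\defeq c_0/2$ (if $b=0$ there is no $\eps$-term and any $\eps>0$ works). The only real work is the bookkeeping producing the displayed bound on $\frs'(\lambda)[x]$ and the observation that $\mu-\delta_+$ is precisely the threshold at which $s^2-2bs-b\delta_+-a$ changes sign — which is exactly why $(\mu,\infty)$, and not a larger interval, is the correct domain in the statement.
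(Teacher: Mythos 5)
Your proof is correct and follows essentially the same route as the paper's: differentiate $\frs(\lambda)[x]$, use \eqref{domcons} together with the hypothesis $|\frs(\lambda)[x]|\le\eps\|x\|^2$ to bound the problematic term, arrive at $\frs'(\lambda)[x]\le(g(\lambda)+h(\lambda)\eps)\|x\|^2$ with $g(\lambda)=-1+\frac{b\lambda+a}{(\lambda-\delta_+)(\lambda-\delta_+-b)}$ and $h(\lambda)=\frac{b}{(\lambda-\delta_+)(\lambda-\delta_+-b)}$, observe $g<0$ precisely on $(\mu,\infty)$, and finish by continuity on a compact subinterval. The only cosmetic difference is that you eliminate $\fra[x]$ first and then substitute into the $\frs'$ estimate, whereas the paper bounds $\|(\lambda-D)^{-1/2}B^*x\|^2$ directly; the resulting inequality is identical, and your explicit verification that $s^2-2bs-b\delta_+-a>0$ for $s>\mu-\delta_+$ simply spells out what the paper asserts is ``easily seen.''
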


\begin{proof}
First note that
\begin{align}
  \frs(\lambda)[x] &= \fra[x]-\lambda\|x\|^2-\bigl\|(\lambda-D)^{-\frac12}B^*x\bigr\|^2,
  \label{frs1} \\[0.5ex]
  \frs'(\lambda)[x] &= -\|x\|^2+\bigl\|(\lambda-D)^{-1}B^*x\bigr\|^2.
  \label{frs2}
\end{align}
Let $\eps>0$ be arbitrary for the moment, let $\lambda\in(\mu,\infty)$
and let $x\in\dom(\fra)=\dom(\frs(\lambda))$ such that $|\frs(\lambda)[x]|\le\eps\|x\|^2$.
It follows from \eqref{domcons} and \eqref{frs1} that
\begin{align*}
  \bigl\|(\lambda-D)^{-\frac12}B^*x\bigr\|^2
  &\le \frac{\|B^*x\|^2}{\lambda-\delta_+}
  \le \frac{b\fra[x]+a\|x\|^2}{\lambda-\delta_+} \\[0.5ex]
  &= \frac{b\bigl(\frs(\lambda)[x]+\lambda\|x\|^2+\bigl\|(\lambda-D)^{-\frac12}B^*x\bigr\|^2\bigr)
  +a\|x\|^2}{\lambda-\delta_+}\,.
\end{align*}
Rearranging this inequality we obtain
\[
  (\lambda-\delta_+-b)\bigl\|(\lambda-D)^{-\frac12}B^*x\bigr\|^2
  \le b\frs(\lambda)[x]+(b\lambda+a)\|x\|^2
  \le (b\eps+b\lambda+a)\|x\|^2.
\]
Since $\lambda>\delta_++b$, we have
\begin{align*}
  \frs'(\lambda)[x]
  &\le -\|x\|^2+\bigl\|(\lambda-D)^{-\frac12}\bigr\|^2\bigl\|(\lambda-D)^{-\frac12}B^*x\bigr\|^2
  \\[0.5ex]
  &\le \biggl(-1+\frac{1}{\lambda-\delta_+}\cdot\frac{b\eps+b\lambda+a}{\lambda-\delta_+-b}\biggr)\|x\|^2
  = \bigl(g(\lambda)+h(\lambda)\eps\bigr)\|x\|^2,
\end{align*}
where
\[
  g(\lambda) \defeq -1+\frac{b\lambda+a}{(\lambda-\delta_+)(\lambda-\delta_+-b)}\,, \qquad
  h(\lambda) \defeq \frac{b}{(\lambda-\delta_+)(\lambda-\delta_+-b)}\,.
\]
Moreover, $g(\lambda)<0$ if and only if
$\lambda^2-2(\delta_++b)\lambda+\delta_+^2+b\delta_+-a>0$;
it is easily seen that the latter inequality is true for $\lambda\in(\mu,\infty)$.
Now let $I$ be a compact subinterval of $(\mu,\infty)$.  Since $g$ is continuous
on $(\mu,\infty)$ and $I$ is compact, there exists a $c<0$
such that $g(\lambda)\le c$ for $\lambda\in I$.
Choose $\eps>0$ so small that $\eps h(\lambda)\le c/2$ for $\lambda\in I$.
Then, with $\delta\defeq c/2$, we have $\frs'(\lambda)[x]\le -\delta\|x\|^2$
for $\lambda\in I$.
\end{proof}

The previous lemma implies that if the function $\frs(\cdot)[x]$, for $x\in\dom(\fra)\backslash\{0\}$,
has a zero, then the derivative is negative at this zero.
In particular, for each $x\in\dom(\fra)\backslash\{0\}$ the function $\frs(\cdot)[x]$
is decreasing at value zero
(in the terminology of \cite{BEL00} and \cite{EL04}) and
hence has at most one zero in $(\mu,\infty)$.
Moreover, $\frs(\lambda)[x]\to-\infty$ as $\lambda\to\infty$.

Next we define a generalised Rayleigh functional, which is used in the variational
principle below.  This functional generalises the Rayleigh quotient for linear operators
to the situation of an operator function;
for more general operator functions it has been defined in \cite{BEL00}
and \cite{EL04}.

\begin{definition}\label{def:rayleigh}
We define the \emph{generalised Rayleigh functional}
$p:\dom(\fra)\backslash\{0\}\to\RR\cup\{-\infty\}$ as follows
\begin{alignat*}{2}
  p(x) &= \lambda_0 & &\text{if } \frs(\lambda_0)[x]=0\text{ for a }\lambda_0\in(\mu,\infty), \\[1ex]
  p(x) &= -\infty  \qquad &
  &\text{if }\frs(\lambda)[x]<0 \;\;\text{for all } \lambda\in(\mu,\infty).
\end{alignat*}
\end{definition}

\begin{remark}\label{rem:rayleighinf}
In the case when $\frs(\lambda)[x]<0$ for all $\lambda\in(\mu,\infty)$ one can also
set $p(x)$ equal to any number in $(-\infty,\mu]$ (which may depend on $x$);
see \cite[\S2]{JLT}.
\end{remark}

\medskip

\noindent
Before we formulate the next theorem we introduce another notation that is needed.

\begin{definition}\label{def:kappami}
For a self-adjoint operator $T$ denote by $\kappa_-(T)$ the dimension of the
spectral subspace for $T$ corresponding to the interval $(-\infty,0)$.
\end{definition}

The next theorem contains a variational principle for eigenvalues of $\cM$
in the interval $(\mu,\infty)$.  Note that there is a shift in the index: in general,
the index of the eigenvalue does not match the dimension of the corresponding subspace
in the variation.
For bounded $A$, $B$ and $D$ a similar but slightly weaker result was proved
in \cite[\S4.3]{BEL00}.

\begin{theorem}\label{finitevar}
Suppose that Assumption~{\rm\ref{assump}} is satisfied, let $\cM$ be the operator
as in Theorem~{\rm\ref{jsaness}}, let $\mu$ be as in \eqref{defmu}
and let $p$ and $\kappa_-$ be as in Definitions~{\rm\ref{def:rayleigh}}
and {\rm\ref{def:kappami}}.
Assume that
\begin{equation}\label{condkappa}
  \exists\,\gamma\in(\mu,\infty) \quad\text{such that}\quad
  \kappa_-(S(\gamma))<\infty.
\end{equation}
Then
\begin{equation}\label{deflae}
  \mu < \lae \defeq \begin{cases}
    \inf\bigl(\sigmaess(\cM)\cap(\mu,\infty)\bigr)
    & \text{if } \sigmaess(\cM)\cap(\mu,\infty)\ne\varnothing, \\[1ex]
    \infty & \text{otherwise}.
  \end{cases}
\end{equation}
Moreover, $\sigma(\cM)\cap(\mu,\lae)$ is at most countable, consists of eigenvalues only
and has $\lae$ as only possible accumulation point.

Let $\gamma_0\in(\mu,\lae)$ be arbitrary and
let $(\lambda_j)_{j=1}^N$, $N\in\mathbb{N}_0\cup\{\infty\}$, be the
finite or infinite sequence of eigenvalues of\, $\cM$ in the interval $[\gamma_0,\lae)$
in non-decreasing order and repeated according to multiplicities.  Then
\begin{equation}\label{defkappa}
  \kappa \defeq \kappa_-(S(\gamma_0)) < \infty
\end{equation}
and
\begin{equation}\label{var1}
  \lambda_n = \min_{\substack{\cL\subset\dom(\fra) \\[0.2ex] \dim\cL=\kappa+n}}\;
  \max_{x\in\cL\setminus\{0\}}\;\;p(x)
  = \max_{\substack{\cL\subset\mathcal{H} \\[0.2ex] \dim\cL=\kappa+n-1}}
  \inf_{\substack{x\in\dom(\fra)\setminus\{0\} \\[0.2ex] \perp\cL}}\;\;p(x)
\end{equation}
for $n\in\NN$, $n\le N$.
Moreover, if\, $N$ is finite and $\cH_1$ is infinite-dimensional,
then $\lae<\infty$ and
\begin{equation}\label{var2}
  \lae = \inf_{\substack{\cL\subset\dom(\fra) \\[0.2ex] \dim\cL=\kappa+n}}\;
  \max_{x\in\cL\setminus\{0\}}\;\;p(x)
  = \sup_{\substack{\cL\subset\mathcal{H} \\[0.2ex] \dim\cL=\kappa+n-1}}
  \inf_{\substack{x\in\dom(\fra)\setminus\{0\} \\[0.2ex] x\perp\cL}}\;\;p(x)
\end{equation}
for $n>N$.
\end{theorem}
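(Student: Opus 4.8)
The plan is to transfer the entire statement to the Schur complement $S$ and then invoke the abstract variational principle for families of quadratic forms satisfying the condition \VM{}. Recall from Theorem~\ref{specequiv} that $\sigma(S)=\sigma(\cM)\cap\UU$, $\sigmap(S)=\sigmap(\cM)\cap\UU$ and $\nul(S(z))=\nul(\cM-z)$ for $z\in\UU$, and from Theorem~\ref{th:sess_equiv} that $\sigmaess(S)\cap(\mu,\infty)=\sigmaess(\cM)\cap(\mu,\infty)$. Since $(\mu,\infty)\subset\UU$, the number $\lae$ in \eqref{deflae} coincides with $\inf\bigl(\sigmaess(S)\cap(\mu,\infty)\bigr)$ (or with $\infty$), the eigenvalues of $\cM$ in a subinterval of $(\mu,\infty)$ are exactly the eigenvalues of $S$ there, with matching multiplicities, and the claim that $\sigma(\cM)\cap(\mu,\lae)$ consists only of eigenvalues is equivalent to the same claim for $S$. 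Hence it suffices to prove everything with $\cM$ replaced by $S$; the generalised Rayleigh functional $p$ from Definition~\ref{def:rayleigh} is built directly from the forms $\frs(\lambda)$ of $S$, so it needs no modification.

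Next I would verify that $S$, on the interval $(\mu,\infty)$, meets the hypotheses of the abstract principle. By Lemma~\ref{forms} and the paragraph after it, $S$ is a holomorphic family of type~(B) on $\UU$; for real $\lambda\in(\mu,\infty)$ the form $\frs(\lambda)$ is symmetric, so $S(\lambda)$ is self-adjoint, and from \eqref{domcons} and $\lambda>\mu\ge\delta_++b$ one gets $\frs(\lambda)[x]\ge\bigl(1-\tfrac{b}{\lambda-\delta_+}\bigr)\fra[x]-\bigl(\lambda+\tfrac{a}{\lambda-\delta_+}\bigr)\|x\|^2$, so $S(\lambda)$ is bounded from below. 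By Lemma~\ref{vmlem}, $\frs$ satisfies \VM{} on $(\mu,\infty)$; as noted after that lemma, each $\frs(\cdot)[x]$ is decreasing at value zero and has at most one zero there, which is precisely what makes $p$ well defined. Condition \eqref{condkappa} supplies a $\gamma\in(\mu,\infty)$ with $\kappa_-(S(\gamma))<\infty$; the \VM{} property forces $\lambda\mapsto\kappa_-(S(\lambda))$ to be non-decreasing, so $\kappa_-(S(\lambda))<\infty$ for every $\lambda\le\gamma$, and the abstract theory then yields that $\sigmaess(S)$ misses an interval to the right of $\mu$. This gives $\mu<\lae$, the discreteness of $\sigma(S)\cap(\mu,\lae)$ with $\lae$ as the only possible accumulation point, and $\kappa_-(S(\gamma_0))<\infty$ for every $\gamma_0\in(\mu,\lae)$, i.e.\ \eqref{defkappa}.

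With these facts in hand, the abstract variational principle for \VM{} operator functions (\cite{LS}; see also \cite{BEL00,EL04,VM74}) applies to $S$ on $(\mu,\infty)$: the zeros of $p$ in $[\gamma_0,\lae)$, listed in non-decreasing order and repeated according to multiplicity, are precisely the eigenvalues of $S$ --- hence, by the first paragraph, of $\cM$ --- in $[\gamma_0,\lae)$, and with $\kappa=\kappa_-(S(\gamma_0))$ they obey the min-max identity in \eqref{var1}. The accompanying max-inf identity in \eqref{var1} is the standard dual of the min-max formula, valid because $p$ is decreasing at value zero, and \eqref{var2} is obtained in the same way once only finitely many eigenvalues are present. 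Finally, if $N$ is finite and $\cH_1$ is infinite-dimensional, then $\lae<\infty$: since $\frs(\lambda)[x]\to-\infty$ as $\lambda\to\infty$ for every fixed $x\ne0$, one has $\kappa_-(S(\lambda))\to\infty$, whereas the number of eigenvalues in $(\gamma_0,\lae)$ equals $\lim_{\lambda\uparrow\lae}\kappa_-(S(\lambda))-\kappa$, so a finite count is incompatible with $\lae=\infty$.

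I expect the main obstacle to be the careful invocation of the abstract \VM{}-variational principle: matching the monotone eigenvalue branches of the type-(B) family $S$ with the single-valued functional $p$ and with the eigenvalues of $\cM$, keeping track of the index shift $\kappa$, and justifying both the monotonicity of $\lambda\mapsto\kappa_-(S(\lambda))$ and the propagation of $\kappa_-(S(\gamma))<\infty$ that produces $\mu<\lae$. The remaining ingredients --- the reductions via Theorems~\ref{specequiv} and~\ref{th:sess_equiv}, the type-(B), self-adjointness and semiboundedness of $S(\lambda)$, and the \VM{} property --- are already at our disposal.
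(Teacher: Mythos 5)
Your proposal follows essentially the same route as the paper: reduce via Theorems~\ref{specequiv} and~\ref{th:sess_equiv} to the Schur complement $S$ on $(\mu,\infty)$, verify that $S$ is a holomorphic type-(B) family satisfying \VM{} by Lemmas~\ref{forms} and~\ref{vmlem}, then invoke the abstract variational principle for such operator functions (the paper cites \cite[Theorem~2.1]{EL04}). Your argument for $\lae<\infty$ (when $N<\infty$ and $\dim\cH_1=\infty$) is packaged a little differently — you argue $\kappa_-(S(\lambda))\to\infty$ directly from $\frs(\lambda)[x]\to-\infty$ on finite-dimensional subspaces rather than from the attainment of $p$ in the abstract theorem — but this is a cosmetic variation, not a genuinely different proof.
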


\begin{proof}
Note first that $\sigma_*(\cM)\cap(\mu,\infty)=\sigma_*(S)\cap(\mu,\infty)$
for $\sigma_*=\sigma,\,\sigmap,\,\sigmaess$; see Theorems~\ref{specequiv}
and \ref{th:sess_equiv}.
Lemma~\ref{forms} implies that the Schur complement $S$
is a holomorphic operator family of type (B).
Moreover, $S$ satisfies the condition \VM{} on $(\mu,\infty)$ by Lemma~\ref{vmlem}.
Hence the assumptions (i)--(v) in \cite[\S2]{EL04} are satisfied.

Suppose that $\lae=\mu$.  Then, for every $\eps>0$, there exists a $\lambda\in(\mu,\mu+\eps)$
such that $\lambda\in\sigmaess(S)$.  This, together with \cite[Lemma~2.9]{EL04},
implies that $\kappa_-(S(t))=\infty$ for all $t>\lambda$, a contradiction.
Hence $\lae>\mu$.
Now almost all remaining assertions follow immediately from \cite[Theorem~2.1]{EL04}.
We only have to show that $\lae<\infty$ if $N<\infty$ and $\dim\cH_1=\infty$.
Suppose that $\lae=\infty$.  Then, by \cite[Theorem~2.1]{EL04}
\[
  \inf_{\substack{\cL\subset\dom(\fra) \\[0.2ex] \dim\cL=\kappa+n}}\;
  \max_{x\in\cL\setminus\{0\}}\;\;p(x) = \infty
\]
for $n>N$, i.e.\ for every $\cL\subset\dom(\fra)$ with $\dim\cL>\kappa+N$ one has
$\max_{x\in\cL\setminus\{0\}}p(x)=\infty$.
By \cite[Lemma~2.8]{EL04} the maximum is attained and therefore $p(x)=\infty$
for some $x\in\cL$.  However, this is a contradiction to the definition of $p$
in our case and hence $\lae<\infty$.
\end{proof}

In the next corollary we consider again the case when $A$ has compact resolvent.

\begin{corollary}\label{co:comp_res1}
Suppose that Assumption~{\rm\ref{assump}} is satisfied, let $\cM$ be the operator
as in Theorem~{\rm\ref{jsaness}}, let $\mu$ be as in \eqref{defmu}
and let $p$ and $\kappa_-$ be as in Definitions~{\rm\ref{def:rayleigh}}
and {\rm\ref{def:kappami}}.
Assume that $A$ has compact resolvent and that $\cH_1$ is infinite-dimensional.

Then $\kappa_-(S(\gamma))<\infty$ for every $\gamma\in(\mu,\infty)$.
Moreover, $\sigmaess(\cM)\cap(\mu,\infty)=\varnothing$ and hence $\lae=\infty$.
Further, $\sigma(\cM)\cap(\mu,\infty)$ consists of infinitely many eigenvalues
{\rm(}i.e.\ $N=\infty${\rm)},
which accumulate only at $\infty$, and \eqref{var1} holds for all $n\in\NN$.
\end{corollary}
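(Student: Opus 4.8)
The plan is to deduce everything from Corollary~\ref{compres} and Theorem~\ref{finitevar}, the only extra work being a short argument for the finiteness of $\kappa_-(S(\gamma))$.

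First I would fix $\gamma\in(\mu,\infty)$ and show that $\kappa_-(S(\gamma))<\infty$. Since $\gamma$ is real, the relation $S(z)^*=S(\bar z)$ (see the proof of Theorem~\ref{specequiv}) yields that $S(\gamma)$ is self-adjoint; being m-sectorial (equivalently, being the self-adjoint operator associated with the closed, symmetric, sectorial form $\frs(\gamma)$), it is bounded from below. On the other hand, the proof of Corollary~\ref{compres} shows that $S(\gamma)$ has compact resolvent. Hence $\sigma(S(\gamma))$ is discrete, bounded from below, and can accumulate only at $+\infty$; in particular only finitely many of its eigenvalues are negative, i.e.\ $\kappa_-(S(\gamma))<\infty$. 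Thus condition \eqref{condkappa} is satisfied (for every, hence for some, $\gamma\in(\mu,\infty)$).

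Next, Corollary~\ref{compres} gives $\sigmaess(\cM)\cap(\mu,\infty)=\varnothing$ directly, and therefore $\lae=\infty$ by the definition \eqref{deflae}. I would then invoke Theorem~\ref{finitevar}: since \eqref{condkappa} holds, the set $\sigma(\cM)\cap(\mu,\infty)=\sigma(\cM)\cap(\mu,\lae)$ is at most countable, consists of eigenvalues only, and has $\lae=\infty$ as its only possible accumulation point (so, in particular, only finitely many of these eigenvalues lie in any bounded subinterval of $(\mu,\infty)$). To see that there are infinitely many of them, i.e.\ $N=\infty$, argue by contradiction: if $N<\infty$, then, because $\cH_1$ is infinite-dimensional, the last part of Theorem~\ref{finitevar} would give $\lae<\infty$, contradicting $\lae=\infty$. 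Hence $N=\infty$, so $\sigma(\cM)\cap(\mu,\infty)$ consists of infinitely many eigenvalues accumulating only at $\infty$, and the min-max characterisation \eqref{var1} is valid for all $n\in\NN$, the restriction $n\le N$ being vacuous.

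The only step that is not an immediate citation is the first one, and I do not expect it to present any genuine difficulty: it merely combines self-adjointness and semiboundedness of $S(\gamma)$ — so that its discrete spectrum cannot accumulate at $0$ from below — with the compactness of its resolvent, the latter being exactly what is extracted in the proof of Corollary~\ref{compres}.
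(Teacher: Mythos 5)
Your proof is correct and follows essentially the same route as the paper's: you establish $\kappa_-(S(\gamma))<\infty$ from semiboundedness (via the sectorial form from Lemma~\ref{forms}) and compactness of the resolvent (extracted from the proof of Corollary~\ref{compres}), then cite Corollary~\ref{compres} for $\sigmaess(\cM)\cap(\mu,\infty)=\varnothing$ and invoke the last part of Theorem~\ref{finitevar} to force $N=\infty$. The only cosmetic difference is that you spell out the self-adjointness of $S(\gamma)$ via $S(z)^*=S(\bar z)$, whereas the paper treats this as implicit.
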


\begin{proof}
It follows from Lemma~\ref{forms} and the proof of Corollary~\ref{compres}
that $S(\gamma)$ is bounded from below and has compact resolvent.
This implies that $\kappa<\infty$.
Moreover, $\sigmaess(\cM)\cap(\mu,\infty)=\varnothing$ by Corollary~\ref{compres}.
Finally, $N=\infty$ because otherwise $\lae<\infty$ by Theorem~\ref{finitevar}.
\end{proof}

The next simple lemma is used below and was proved in \cite[Lemma~3.5]{LLT02}
for bounded $B$.

\begin{lemma}\label{le:simple}
Let $x\in\dom(\fra)\backslash\{0\}$ such that $B^*x\ne0$ and let $\lambda\in\UU$.
With $y\defeq(D-\lambda)^{-1}B^*x$ we have
\[
  \det\bigl(\cM_{x,y}-\lambda\bigr)
  = \frac{\bigl\langle B^*x,(D-\lambda)^{-1}B^*x\bigr\rangle\,\frs(\lambda)[x]}{\|x\|^2\|y\|^2}\,.
\]
\end{lemma}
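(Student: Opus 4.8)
The plan is to compute the determinant of the $2\times2$ matrix $\cM_{x,y}$ directly from its definition, using the particular choice $y=(D-\lambda)^{-1}B^*x$ to simplify the off-diagonal product and the $(2,2)$-entry. First I would recall from Definition~\ref{def:W2} that
\[
  \det\bigl(\cM_{x,y}-\lambda\bigr)
  = \Bigl(\frac{\fra[x]}{\|x\|^2}-\lambda\Bigr)\Bigl(\frac{\frd[y]}{\|y\|^2}-\lambda\Bigr)
  + \frac{|\langle y,B^*x\rangle|^2}{\|x\|^2\|y\|^2}\,.
\]
Since $B^*x\ne0$ and $\lambda\in\UU\subset\rho(D)$, the vector $y=(D-\lambda)^{-1}B^*x$ is nonzero, so $\cM_{x,y}$ is well defined. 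The key substitutions are $\langle y,B^*x\rangle=\langle (D-\lambda)^{-1}B^*x,B^*x\rangle$ and $\frd[y]-\lambda\|y\|^2=\langle(D-\lambda)y,y\rangle=\langle B^*x,(D-\lambda)^{-1}B^*x\rangle$, where the last equality uses $(D-\lambda)y=B^*x$ and self-adjointness of $D$ (note $\overline{\langle(D-\lambda)^{-1}B^*x,B^*x\rangle}=\langle B^*x,(D-\lambda)^{-1}B^*x\rangle$ only up to conjugation, but $\frd[y]-\lambda\|y\|^2$ need not be real for non-real $\lambda$, so I would be careful to write it as $\langle B^*x,(D-\bar\lambda)^{-1}B^*x\rangle$ or rather directly as $\langle(D-\lambda)y,y\rangle$ without conjugating).

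Carrying this out, the second factor becomes $\frac{1}{\|y\|^2}\langle(D-\lambda)y,y\rangle=\frac{1}{\|y\|^2}\langle B^*x,(D-\lambda)^{-1}B^*x\rangle$; wait — more precisely $\langle(D-\lambda)y,y\rangle=\langle B^*x,(D-\bar\lambda)^{-1}B^*x\rangle$. To match the statement I would instead keep everything on one side: writing $c\defeq\langle(D-\lambda)^{-1}B^*x,B^*x\rangle$, one has $\langle y,B^*x\rangle=c$, and $\langle(D-\lambda)y,y\rangle=\langle B^*x,(D-\lambda)^{-1}B^*x\rangle=\overline{c}$ only if one also conjugates; the cleanest route is to observe $\frd[y]-\lambda\|y\|^2=\langle(D-\lambda)y,y\rangle$ and note that $\langle B^*x,(D-\lambda)^{-1}B^*x\rangle$ is exactly this quantity by moving the resolvent across the inner product using self-adjointness of $(D-\lambda)^{-1}$ composed with conjugation — so I should check the hypothesis that the paper's inner product is linear in the first argument, in which case $\langle B^*x,(D-\lambda)^{-1}B^*x\rangle=\langle (D-\bar\lambda)^{-1}B^*x,B^*x\rangle$. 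Since the right-hand side of the lemma involves the particular ordering $\langle B^*x,(D-\lambda)^{-1}B^*x\rangle$, I would simply track the ordering carefully and substitute $\frd[y]/\|y\|^2-\lambda=\langle B^*x,(D-\lambda)^{-1}B^*x\rangle/\|y\|^2$, which follows from $(D-\lambda)^{-1}B^*x=y$ and hence $\langle B^*x,(D-\lambda)^{-1}B^*x\rangle=\langle (D-\lambda)y,y\rangle=\frd[y]-\lambda\|y\|^2$, valid because $y\in\dom(D)$.

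With $c=\langle B^*x,(D-\lambda)^{-1}B^*x\rangle$ so that the second factor equals $c/\|y\|^2$, and recalling $\langle y,B^*x\rangle=\langle(D-\lambda)^{-1}B^*x,B^*x\rangle$, one gets
\[
  \det\bigl(\cM_{x,y}-\lambda\bigr)
  = \Bigl(\frac{\fra[x]}{\|x\|^2}-\lambda\Bigr)\frac{c}{\|y\|^2}
  + \frac{\langle y,B^*x\rangle\,\overline{\langle y,B^*x\rangle}}{\|x\|^2\|y\|^2}\,.
\]
Here I would use that $\overline{\langle y,B^*x\rangle}=\langle B^*x,y\rangle=\langle B^*x,(D-\lambda)^{-1}B^*x\rangle=c$, so the second term is $\langle y,B^*x\rangle\,c/(\|x\|^2\|y\|^2)$. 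Factoring out $c/(\|x\|^2\|y\|^2)$ leaves $\bigl(\fra[x]-\lambda\|x\|^2+\langle y,B^*x\rangle\bigr)$ inside; but by the definition of $\frs$ in Definition~\ref{def:schur}, $\frs(\lambda)[x]=\fra[x]-\lambda\|x\|^2+\langle(D-\lambda)^{-1}B^*x,B^*x\rangle=\fra[x]-\lambda\|x\|^2+\langle y,B^*x\rangle$, which is exactly that bracket. This yields $\det(\cM_{x,y}-\lambda)=c\,\frs(\lambda)[x]/(\|x\|^2\|y\|^2)$, the claimed identity. The only real subtlety — hence the step I would treat most carefully — is the bookkeeping of complex conjugates in the inner products when $\lambda\notin\RR$, since then $\frd[y]-\lambda\|y\|^2$ and $c$ are genuinely complex; everything else is a two-line algebraic manipulation.
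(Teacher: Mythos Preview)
Your approach is correct and essentially identical to the paper's: both compute the determinant directly, substitute $\frd[y]-\lambda\|y\|^2=\langle(D-\lambda)y,y\rangle=\langle B^*x,(D-\lambda)^{-1}B^*x\rangle$, and factor this quantity to reveal $\frs(\lambda)[x]$. The paper sidesteps your conjugation worries by writing the off-diagonal contribution as $\langle y,B^*x\rangle\langle B^*x,y\rangle$ from the outset rather than $|\langle y,B^*x\rangle|^2$, which makes the bookkeeping for non-real $\lambda$ transparent; you might adopt that notational trick to eliminate the hedging in your write-up.
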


\begin{proof}
Clearly, $y\ne0$.  From the definition of $\cM_{x,y}$ we obtain
\begin{align*}
  & \|x\|^2\|y\|^2\det\bigl(\cM_{x,y}-\lambda\bigr)
  = \bigl(\fra[x]-\lambda\|x\|^2\bigr)\bigl(\frd[y]-\lambda\|y\|^2\bigr)
  + \langle y,B^*x\rangle\langle B^*x,y\rangle
  \\[0.5ex]
  &= \bigl(\fra[x]-\lambda\|x\|^2\bigr)\bigl\langle B^*x,(D-\lambda)^{-1}B^*x\bigr\rangle
  \\[0.5ex]
  &\quad + \bigl\langle(D-\lambda)^{-1}B^*x,B^*x\bigr\rangle
  \bigl\langle B^*x,(D-\lambda)^{-1}B^*x\bigr\rangle
  \\[0.5ex]
  &= \bigl\langle B^*x,(D-\lambda)^{-1}B^*x\bigr\rangle\frs(\lambda)[x],
\end{align*}
which proves the assertion.
\end{proof}

In the next proposition we consider the case when one of the inequalities
\eqref{condA1}, \eqref{condA2} is strict.  Then the index shift $\kappa$
is equal to $0$ for appropriate $\gamma_0$.

\begin{proposition}\label{pr:strictA}
Suppose that Assumption~{\rm\ref{assump}} is satisfied, let $\cM$ be the operator
as in Theorem~{\rm\ref{jsaness}},
let $a\in\RR$, $b\ge0$ be such that \eqref{domcons} is satisfied
and let $\mu$, $\mu_+$ be as in \eqref{defmu} and \eqref{defmupl}, respectively.
Assume that
\begin{equation}\label{strictcondA}
  b\delta_++b^2+a < 0 \qquad\text{or}\qquad
  \frac{\alpha_--\delta_+}{2} > b + \sqrt{\bigl(b\delta_++b^2+a\bigr)_+\!}\,.
\end{equation}
Then
\begin{equation}\label{chinequ}
  \delta_+<\mu<\mu_+\le\alpha_-,
\end{equation}
and for each $\gamma\in(\mu,\mu_+)$ there exists a $c>0$ such that
\begin{equation}\label{sunifpos}
  \frs(\gamma)[x]\ge c\|x\|^2, \qquad x\in\dom(\fra),
\end{equation}
and hence $\kappa_-(S(\gamma))=0$.
\end{proposition}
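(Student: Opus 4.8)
The plan is to reduce the whole statement to an elementary scalar inequality. The chain \eqref{chinequ} is immediate from Remark~\ref{re:pos_gap}\,(iii): the hypothesis \eqref{strictcondA} is precisely a strict form of condition \condA{}, so \condA{} holds and hence $\delta_+<\mu\le\mu_+\le\alpha_-$ by \eqref{chain_inequ}, while the strictness in \eqref{strictcondA} upgrades $\mu\le\mu_+$ to $\mu<\mu_+$ (we may assume $B\neq0$, the case $B=0$ being trivial). Thus $(\mu,\mu_+)$ is a non-empty subinterval of $(\delta_+,\infty)$, and since $\mu-\delta_+\ge b\ge b_0$, every $\gamma\in(\mu,\mu_+)$ satisfies $\gamma-\delta_+>b_0$, i.e.\ $(\mu,\mu_+)\subset\UU$, so that $\frs(\gamma)$ and $S(\gamma)$ are defined on this interval.

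First I would prove the form estimate. Fix $\gamma\in(\mu,\mu_+)$ and $x\in\dom(\fra)$. Since $D\le\delta_+<\gamma$ one has $-(\gamma-\delta_+)^{-1}I\le(D-\gamma)^{-1}\le0$, hence $\langle(D-\gamma)^{-1}B^*x,B^*x\rangle\ge-(\gamma-\delta_+)^{-1}\|B^*x\|^2$. Inserting this into the expression for $\frs(\gamma)[x]$, using \eqref{domcons}, using $\gamma-\delta_+>b$ (which holds since $\gamma>\mu\ge\delta_++b$) so that the coefficient $1-b(\gamma-\delta_+)^{-1}$ is positive, and finally using $\fra[x]\ge\alpha_-\|x\|^2$, one obtains
\[
  \frs(\gamma)[x]\ \ge\ \Bigl(1-\tfrac{b}{\gamma-\delta_+}\Bigr)\fra[x]-\Bigl(\gamma+\tfrac{a}{\gamma-\delta_+}\Bigr)\|x\|^2\ \ge\ c(\gamma)\,\|x\|^2,
\]
where $c(\gamma)\defeq\alpha_--\gamma-\dfrac{b\alpha_-+a}{\gamma-\delta_+}$. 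Once $c(\gamma)>0$ is established, the proposition follows: $\frs(\gamma)$ is a closed symmetric form bounded below by $c(\gamma)$, so the associated self-adjoint operator satisfies $S(\gamma)\ge c(\gamma)I>0$, whence $\sigma(S(\gamma))\subset(0,\infty)$ and $\kappa_-(S(\gamma))=0$.

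The core of the argument is therefore $c(\gamma)>0$ on $(\mu,\mu_+)$. For $\gamma>\delta_+$ one has $c(\gamma)>0$ if and only if $\varphi(\gamma)>0$, where $\varphi(\lambda)\defeq(\alpha_--\lambda)(\lambda-\delta_+)-(b\alpha_-+a)$ is a downward-opening quadratic with axis $\tfrac12(\alpha_-+\delta_+)$ and discriminant $(\alpha_--\delta_+)^2-4(b\alpha_-+a)$. I would first show this discriminant is strictly positive: if it were not, then $\bigl(\tfrac{\alpha_--\delta_+}{2}\bigr)^2\le b\alpha_-+a$, and Lemma~\ref{le:nott} (with $t=\alpha_-$, $\delta=\delta_+$) would give $b\delta_++b^2+a\ge0$ together with $\tfrac{\alpha_--\delta_+}{2}\le b+\sqrt{b\delta_++b^2+a}$, contradicting \eqref{strictcondA}. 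Hence $\varphi$ has two distinct real zeros $\rho_-<\rho_+$ with $\{\varphi\ge0\}=[\rho_-,\rho_+]$, and it remains to check $\varphi(\mu)\ge0$ and $\varphi(\mu_+)\ge0$ (equivalently $c(\mu)\ge0$, $c(\mu_+)\ge0$, since $\mu,\mu_+>\delta_+$): this places $\mu,\mu_+\in[\rho_-,\rho_+]$, so for $\gamma\in(\mu,\mu_+)$ we get $\rho_-\le\mu<\gamma<\mu_+\le\rho_+$ and thus $\varphi(\gamma)>0$.

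I expect the main (though routine) obstacle to be verifying $c(\mu)\ge0$ and $c(\mu_+)\ge0$, which is a short case distinction along the two alternatives of \condA{}. For $c(\mu)$: after clearing the positive denominator $\mu-\delta_+=b+\sqrt{(b\delta_++b^2+a)_+}$, the inequality becomes — according to the sign of $b\delta_++b^2+a$ — either $b\delta_++b^2+a\le0$ (the first alternative of \condA{}, which then forces $b>0$) or $\tfrac{\alpha_--\delta_+}{2}\ge b+\sqrt{b\delta_++b^2+a}$ (the second alternative), and one of these holds since \condA{} does. For $c(\mu_+)$ I would use $\mu_+=\max\{\mu_+^{(1)},\mu_+^{(2)}\}$ from Remark~\ref{re:pos_gap}\,(v), so it suffices to see $\mu_+^{(1)},\mu_+^{(2)}\in[\rho_-,\rho_+]$ whenever defined; here $\mu_+^{(2)}=\alpha_--b-\sqrt{(b\delta_++b^2+a)_+}$ and $\mu$ are reflections of one another in the axis $\tfrac12(\alpha_-+\delta_+)$ of $\varphi$, so $\varphi(\mu_+^{(2)})=\varphi(\mu)\ge0$, while $\varphi(\mu_+^{(1)})=\varphi(-a/b)=(b\alpha_-+a)\cdot\bigl(-(b\delta_++b^2+a)\bigr)/b^2\ge0$ by \eqref{abalmi} and $b\delta_++b^2+a\le0$. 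This bookkeeping is the only delicate point; the resolvent bound, the application of \eqref{domcons}, and the passage from the form inequality to $\kappa_-(S(\gamma))=0$ are all standard.
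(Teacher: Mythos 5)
Your proof is correct, and it takes a genuinely different route from the paper's. The paper establishes $\frs(\gamma)[x]>0$ pointwise by a determinant argument: for $B^*x\ne0$ it sets $y=(D-\gamma)^{-1}B^*x$, invokes Proposition~\ref{numrange} to place $\gamma$ strictly between $\lambda_-\binom{x}{y}$ and $\lambda_+\binom{x}{y}$, and then reads off the sign of $\frs(\gamma)[x]$ from Lemma~\ref{le:simple}; it then upgrades pointwise positivity to uniform positivity indirectly, by noting $(\mu,\mu_+)\subset\rho(\cM)$ (Theorem~\ref{th:specinB}) and hence $0\in\rho(S(\gamma))$ via the spectral equivalence Theorem~\ref{specequiv}. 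You instead bound $\frs(\gamma)[x]$ from below directly using $(D-\gamma)^{-1}\ge-(\gamma-\delta_+)^{-1}I$, \eqref{domcons} and $\fra\ge\alpha_-$, producing the explicit constant $c(\gamma)=\alpha_--\gamma-\tfrac{b\alpha_-+a}{\gamma-\delta_+}$, and then reduce $c(\gamma)>0$ to the positivity of the downward-opening quadratic $\varphi$; the verification that $\mu,\mu_+$ both lie in $\{\varphi\ge0\}$ via the reflection identity $\varphi(\mu_+^{(2)})=\varphi(\mu)$ and the factorisation $\varphi(-a/b)=(b\alpha_-+a)\bigl(-(b\delta_++b^2+a)\bigr)/b^2$ is a nice piece of bookkeeping that I checked and which holds. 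What your approach buys is self-containment and explicitness: you do not need the quadratic-numerical-range lemmas, the spectral inclusion $\sigma(\cM)\subset\BB$, or the spectral equivalence of $\cM$ and $S$, and you obtain a concrete lower bound $c(\gamma)$ rather than an abstract one; the price is a somewhat longer elementary computation. One cosmetic point: like the paper's Remark~\ref{re:pos_gap}\,(iii), the strict inequality $\delta_+<\mu$ in \eqref{chinequ} tacitly uses $B\ne0$ (or at least $a\ne0$ or $b\ne0$), which you correctly flag by calling the case $B=0$ trivial, though strictly speaking with $B=0$ and $a=b=0$ one has $\mu=\delta_+$; this is a latent degeneracy in the statement itself, not a defect of your argument.
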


\begin{proof}
The inequalities in \eqref{chinequ} follow from Remark~\ref{re:pos_gap}\,(iii).
Let $\gamma\in(\mu,\mu_+)$ and $x\in\dom(\fra)\backslash\{0\}$.
We first show that $\frs(\gamma)[x]>0$.
If $B^*x=0$, then $\frs(\gamma)[x] = \fra[x]-\gamma\|x\|^2 > 0$
since $\gamma<\alpha_-$.
Now assume that $B^*x\ne0$.  Set $y\defeq(D-\gamma)^{-1}B^*x$.
From Proposition~\ref{numrange} we obtain
\[
  \lambda_-\binom{x}{y} \le \mu < \gamma < \mu_+ \le \lambda_+\binom{x}{y}.
\]
Now Lemma~\ref{le:simple} implies that
\begin{align*}
  0 &> \biggl(\gamma-\lambda_+\binom xy\biggr)\biggl(\gamma-\lambda_-\binom xy\biggr)
  = \det\bigl(\cM_{x,y}-\gamma\bigr)
  \\[0.5ex]
  &= \frac{\bigl\langle B^*x,(D-\gamma)^{-1}B^*x\bigr\rangle\,\frs(\gamma)[x]}{\|x\|^2\|y\|^2}\,.
\end{align*}
Since $\gamma>\delta_+$, this shows that $\frs(\gamma)[x]>0$.
Hence the operator $S(\gamma)$ is non-negative.

Further, $(\mu,\mu_+)\subset\rho(\cM)$ by Theorem~\ref{th:specinB}
and therefore $0\in\rho(S(\gamma))$ by Theorem~\ref{th:sess_equiv}.
This proves that $S(\gamma)$ is uniformly positive, i.e.\ \eqref{sunifpos} holds
and $\kappa_-(S(\gamma))=0$.
\end{proof}

In Theorem~\ref{finitevar2} below we prove a variational principle
with the functional $\lambda_+$.  To this end we need some lemmas
to rewrite $p(x)$ in terms of $\lambda_+$.

\begin{lemma}\label{lamlem1}
Let $x\in\dom(\fra)$ and assume that $B^*x\ne 0$.
If\, $\frs(\lambda)[x] \le 0$ for some $\lambda\in(\mu,\infty)$, then
\begin{equation}\label{lambdapl}
  \lambda_+\begin{pmatrix} x \\[0.5ex] (D-\lambda)^{-1}B^*x \end{pmatrix} \le \lambda.
\end{equation}
If\, $\frs(\lambda)[x]=0$, then there is equality in \eqref{lambdapl}.
\end{lemma}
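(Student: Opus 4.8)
The plan is to exploit Lemma~\ref{le:simple}, which expresses $\det(\cM_{x,y}-\lambda)$ for $y=(D-\lambda)^{-1}B^*x$ in terms of $\frs(\lambda)[x]$ and the scalar quantity $\langle B^*x,(D-\lambda)^{-1}B^*x\rangle$. Since $\lambda\in(\mu,\infty)\subset\UU$ and $\mu\ge\delta_+$ by Remark~\ref{re:pos_gap}\,(i) (in fact $\lambda>\delta_+$), the operator $(D-\lambda)^{-1}$ is negative, so $(D-\lambda)^{-1}B^*x$ is nonzero (as $B^*x\ne0$) and
\[
  \bigl\langle B^*x,(D-\lambda)^{-1}B^*x\bigr\rangle < 0.
\]
Hence Lemma~\ref{le:simple} gives that $\det(\cM_{x,y}-\lambda)$ has the opposite sign to $\frs(\lambda)[x]$: if $\frs(\lambda)[x]\le0$, then $\det(\cM_{x,y}-\lambda)\ge0$; if $\frs(\lambda)[x]=0$, then $\det(\cM_{x,y}-\lambda)=0$.

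Next I would translate the sign of the determinant into a statement about $\lambda_+\binom xy$. Writing $\det(\cM_{x,y}-\lambda)=\bigl(\lambda-\lambda_+\binom xy\bigr)\bigl(\lambda-\lambda_-\binom xy\bigr)$, the key additional input is Lemma~\ref{lamlem0}, which tells us $\Re\lambda_-\binom xy\le\mu<\lambda$, so in particular $\lambda-\lambda_-\binom xy$ has positive real part and is nonzero. Now two cases: if $\lambda_\pm\binom xy$ are non-real, they are complex conjugates, so $\det(\cM_{x,y}-\lambda)=|\lambda-\lambda_+\binom xy|^2>0$, which is consistent with $\frs(\lambda)[x]<0$ but cannot occur when $\frs(\lambda)[x]=0$; however in this non-real case one checks directly that $\Re\lambda_+\binom xy=\Re\lambda_-\binom xy\le\mu<\lambda$, so \eqref{lambdapl} holds (with strict inequality). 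If $\lambda_\pm\binom xy$ are real, then $\lambda-\lambda_-\binom xy>0$, so the sign of $\det(\cM_{x,y}-\lambda)$ equals the sign of $\lambda-\lambda_+\binom xy$; thus $\frs(\lambda)[x]\le0$ forces $\lambda-\lambda_+\binom xy\ge0$, i.e.\ \eqref{lambdapl}, and $\frs(\lambda)[x]=0$ forces $\lambda=\lambda_+\binom xy$, giving equality.

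The only real subtlety is handling the non-real case for $\lambda_\pm\binom xy$ cleanly — one must observe that $\lambda_-\binom xy$ real and bounded by $\mu$ is automatic from Lemma~\ref{lamlem0} only for its real part, so the argument should be organised around $\Re\lambda_-\binom xy\le\mu$ throughout rather than assuming reality. I expect this case distinction, together with keeping track of which inequalities are strict, to be the main (though modest) obstacle; everything else is a direct substitution into Lemma~\ref{le:simple} and reading off signs.
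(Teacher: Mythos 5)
Your proposal is correct and uses the same ingredients as the paper's own proof: Lemma~\ref{le:simple} to express $\det(\cM_{x,y}-\lambda)$ via $\frs(\lambda)[x]$, the sign of $\langle B^*x,(D-\lambda)^{-1}B^*x\rangle$ (negative since $\lambda>\delta_+$), and Lemma~\ref{lamlem0} to separate $\lambda_-$ from $\lambda$. In fact you are slightly more careful than the paper on exactly the point you flagged: the paper's proof writes ``$\lambda_-\binom{x}{y}\le\mu<\lambda$'' citing Lemma~\ref{lamlem0}, but that lemma controls only $\Re\lambda_-$, and when $\frs(\lambda)[x]<0$ strictly the roots $\lambda_\pm\binom{x}{y}$ genuinely can be a complex-conjugate pair; the written inequality $\lambda_+\le\lambda$ is then to be read as $\Re\lambda_+\le\lambda$, which is also how Lemma~\ref{lamlem2}\,(ii) subsequently uses the result. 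Your case split (non-real: $\Re\lambda_+=\Re\lambda_-\le\mu<\lambda$; real: sign of the determinant equals sign of $\lambda-\lambda_+$) makes this rigorous. One tiny slip: in the non-real branch you write ``\eqref{lambdapl} holds (with strict inequality)'', but \eqref{lambdapl} as literally printed presupposes $\lambda_+$ real, so the precise conclusion there is $\Re\lambda_+<\lambda$---your closing paragraph shows you are aware of this, so it is a matter of phrasing rather than substance.
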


\begin{proof}
Set $y\defeq(D-\lambda)^{-1}B^*x$, which is non-zero.  From Lemma~\ref{le:simple}
we obtain
\[
  \det\bigl(\cM_{x,y}-\lambda\bigr)
  = \frac{\bigl\langle B^*x,(D-\lambda)^{-1}B^*x\bigr\rangle\,\frs(\lambda)[x]}{\|x\|^2\|y\|^2}
  \ge 0
\]
since $\lambda>\delta_+$.  By Lemma~\ref{lamlem0} we have $\lambda_-\binom{x}{y} \le \mu < \lambda$,
which implies that $\lambda_+\binom{x}{y}\le\lambda$.
If $\frs(\lambda)[x]=0$, then $\det(\cM_{x,y}-\lambda)=0$ and hence $\lambda_+\binom{x}{y}=\lambda$.
\end{proof}

\begin{lemma}\label{lamlem2}
Let $x\in\dom(\fra)\backslash\{0\}$.
\begin{myenum}
\item
If\, $\frs(\lambda_0)[x]= 0$ for some $\lambda_0\in(\mu,\infty)$,
i.e.\ $\lambda_0=p(x)$, then
\[
  \lambda_\pm\binom{x}{y}\in\RR \qquad\text{for all}\quad y\in\dom(\frd)\backslash\{0\}
\]
and
\begin{equation}\label{lambda0min}
  \min_{y\in\dom(\frd)\setminus\{0\}}\lambda_+\binom{x}{y} = \lambda_0.
\end{equation}
\item
If\, $\frs(\lambda)[x]<0$ for all $\lambda\in(\mu,\infty)$,
i.e.\ $p(x)=-\infty$, then
\begin{equation}\label{lambda0inf}
  \inf_{y\in\dom(\frd)\setminus\{0\}}\Re\lambda_+\binom{x}{y} \le \mu.
\end{equation}
\end{myenum}
\end{lemma}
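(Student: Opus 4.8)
The plan is to express everything through the explicit eigenvalue formula \eqref{lambdapmalpha}, writing $\alpha=\fra[x]/\|x\|^2$, $\delta=\frd[y]/\|y\|^2$, $\beta=\langle y,B^*x\rangle/(\|x\|\,\|y\|)$ as in \eqref{alphabetadelta}, and to replace the crude bound \eqref{estbeta} on $|\beta|^2$ by a $\lambda$-weighted Cauchy--Schwarz estimate. Concretely, for $\lambda\in(\mu,\infty)$ (so $\lambda-D>0$ since $\lambda>\delta_+$) and $B^*x\ne0$, setting $c_\lambda\defeq\|(\lambda-D)^{-1/2}B^*x\|^2/\|x\|^2>0$, I would use
\begin{align*}
  |\beta|^2
  &= \frac{\bigl|\langle(\lambda-D)^{1/2}y,(\lambda-D)^{-1/2}B^*x\rangle\bigr|^2}{\|x\|^2\|y\|^2} \\
  &\le \frac{\bigl\langle(\lambda-D)y,y\bigr\rangle}{\|y\|^2}\cdot\frac{\bigl\langle(\lambda-D)^{-1}B^*x,B^*x\bigr\rangle}{\|x\|^2}
  = (\lambda-\delta)\,c_\lambda ,
\end{align*}
where $\langle(\lambda-D)y,y\rangle$ is read via the form $\frd$.

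For part~(i), fix $\lambda_0=p(x)\in(\mu,\infty)$, so $\frs(\lambda_0)[x]=0$ and $\lambda_0>\mu\ge\delta_+$. If $B^*x=0$, then $\frs(\lambda)[x]=\fra[x]-\lambda\|x\|^2$ forces $\alpha=\lambda_0$ and $\beta=0$ for every $y$, hence $\lambda_\pm\binom{x}{y}\in\RR$ and $\lambda_+\binom{x}{y}=\max\{\lambda_0,\delta\}=\lambda_0$, which gives \eqref{lambda0min}. If $B^*x\ne0$, the vanishing of $\frs(\lambda_0)[x]$ unravels to the identity $\alpha=\lambda_0+c_{\lambda_0}$. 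Then, for an arbitrary $y\in\dom(\frd)\backslash\{0\}$ (note $\delta\le\delta_+<\lambda_0$), I would argue two things: first, $\lambda_\pm\binom{x}{y}\in\RR$, because otherwise $\bigl(\frac{\alpha-\delta}{2}\bigr)^2<|\beta|^2$, and inserting $\alpha-\delta=c_{\lambda_0}+(\lambda_0-\delta)$ together with the weighted estimate yields $(c_{\lambda_0}-(\lambda_0-\delta))^2<0$, impossible; second, $\lambda_+\binom{x}{y}\ge\lambda_0$, because $\lambda_+\binom{x}{y}<\lambda_0$ (together with $\lambda_-\binom{x}{y}\le\lambda_+\binom{x}{y}<\lambda_0$) would give $\det(\cM_{x,y}-\lambda_0)=(\lambda_0-\lambda_+\binom{x}{y})(\lambda_0-\lambda_-\binom{x}{y})>0$, whereas $\det(\cM_{x,y}-\lambda_0)=(\alpha-\lambda_0)(\delta-\lambda_0)+|\beta|^2$ and $\delta-\lambda_0<0$ force $\alpha-\lambda_0<|\beta|^2/(\lambda_0-\delta)\le c_{\lambda_0}$, contradicting $\alpha-\lambda_0=c_{\lambda_0}$. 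Finally I would put $y_0\defeq(D-\lambda_0)^{-1}B^*x$, which is non-zero and lies in $\dom(D)\subset\dom(\frd)$, and invoke Lemma~\ref{lamlem1} in its equality case (since $\frs(\lambda_0)[x]=0$) to get $\lambda_+\binom{x}{y_0}=\lambda_0$; hence the infimum in \eqref{lambda0min} is attained and equals $\lambda_0$.

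For part~(ii), assume $\frs(\lambda)[x]<0$ for all $\lambda\in(\mu,\infty)$. If $B^*x=0$, then $\fra[x]-\lambda\|x\|^2<0$ for all $\lambda>\mu$, so $\alpha\le\mu$; since $\beta=0$ we get $\lambda_+\binom{x}{y}=\max\{\alpha,\delta\}\le\max\{\mu,\delta_+\}=\mu$ for every admissible $y$, giving \eqref{lambda0inf}. If $B^*x\ne0$, then for each $\gamma\in(\mu,\infty)\subset\UU$ I would set $y_\gamma\defeq(D-\gamma)^{-1}B^*x\ne0$ and show $\Re\lambda_+\binom{x}{y_\gamma}\le\gamma$: if $\lambda_\pm\binom{x}{y_\gamma}$ are non-real they are conjugate, so $\Re\lambda_+\binom{x}{y_\gamma}=\Re\lambda_-\binom{x}{y_\gamma}\le\mu<\gamma$ by Lemma~\ref{lamlem0}; if they are real, Lemma~\ref{lamlem1} (applicable since $\frs(\gamma)[x]<0$) gives $\lambda_+\binom{x}{y_\gamma}\le\gamma$. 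Taking the infimum over $\gamma\downarrow\mu$ then yields $\inf_{y}\Re\lambda_+\binom{x}{y}\le\inf_{\gamma>\mu}\gamma=\mu$, i.e.\ \eqref{lambda0inf}.

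The only non-routine step is locating the right bound on $|\beta|^2$ in part~(i): the estimate \eqref{estbeta} alone is too weak, and one must weight Cauchy--Schwarz by $\lambda_0-D$; once this is done, the identity $\alpha=\lambda_0+c_{\lambda_0}$ coming from $\frs(\lambda_0)[x]=0$ makes the discriminant test collapse to the trivially true $(c_{\lambda_0}-(\lambda_0-\delta))^2\ge0$, which simultaneously delivers reality of $\lambda_\pm\binom{x}{y}$ and the lower bound $\lambda_+\binom{x}{y}\ge\lambda_0$. The remaining manipulations with the $2\times2$ matrices $\cM_{x,y}$ and the appeals to Lemmas~\ref{lamlem0} and~\ref{lamlem1} are routine.
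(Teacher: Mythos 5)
Your proof is correct and takes essentially the same route as the paper: the key ingredient in both is the $\lambda_0$-weighted Cauchy--Schwarz estimate $|\langle y,B^*x\rangle|^2\le(\frd-\lambda_0)[y]\,\bigl\langle(D-\lambda_0)^{-1}B^*x,B^*x\bigr\rangle$, together with the appeals to Lemmas~\ref{lamlem0} and~\ref{lamlem1}. The only organizational difference is in part~(i) with $B^*x\ne0$, where the paper derives the single inequality $\det(\cM_{x,y}-\lambda_0)\le0$ and reads off both the reality of $\lambda_\pm\binom{x}{y}$ and $\lambda_+\binom{x}{y}\ge\lambda_0$ in one stroke (a monic real quadratic that is nonpositive at a real point has real roots straddling that point), whereas you establish the two facts by separate discriminant and determinant contradictions --- the underlying arithmetic is the same.
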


\begin{proof}
If $B^*x=0$, then $\frs(\lambda)[x] = \fra[x]-\lambda\|x\|^2$ and
\[
  \lambda_+\binom{x}{y} = \max\biggl\{\frac{\fra[x]}{\|x\|^2},\frac{\frd[y]}{\|y\|^2}\biggr\}
  \qquad\text{for all}\quad y\in\dom(\frd)\backslash\{0\}.
\]
Since $\frd[y]/\|y\|^2\le\delta_+\le\mu$, the assertion follows in both cases (i) and (ii).

For the rest of the proof we assume that $B^*x\ne 0$.

(i) Suppose that $\frs(\lambda_0)[x]=0$ for some $\lambda_0\in(\mu,\infty)$.
For any $y\in\dom(\frd)\backslash\{0\}$ we have
\begin{align*}
  |\langle y,B^*x\rangle|^2
  &= \bigl|\bigl\langle(\lambda_0-D)^{\frac12}y,(\lambda_0-D)^{-\frac12}B^*x\bigr\rangle\bigr|^2
  \\[0.5ex]
  &\le \bigl\|(\lambda_0-D)^{\frac12}y\bigr\|^2\,\bigl\|(\lambda_0-D)^{-\frac12}B^*x\bigr\|^2
  \\[0.5ex]
  &= (\frd-\lambda_0)[y]\bigl\langle(D-\lambda_0)^{-1}B^*x,B^*x\bigr\rangle
\end{align*}
and hence
\begin{align}
  \det\bigl(\cM_{x,y}-\lambda_0\bigr)
  &= \frac{(\fra-\lambda_0)[x]}{\|x\|^2}\cdot\frac{(\frd-\lambda_0)[y]}{\|y\|^2}
  + \frac{|\langle y,B^*x\rangle|^2}{\|x\|^2\|y\|^2}
  \notag\\[0.5ex]
  &\le \frac{(\fra-\lambda_0)[x]\cdot(\frd-\lambda_0)[y]
  + (\frd-\lambda_0)[y]\bigl\langle(D-\lambda_0)^{-1}B^*x,B^*x\bigr\rangle}{\|x\|^2\|y\|^2}
  \notag\\[0.5ex]
  &= \frac{\frs(\lambda_0)[x]\cdot(\frd-\lambda_0)[y]}{\|x\|^2\|y\|^2}
  = 0.
  \label{detle0}
\end{align}
Since $\det(\cM_{x,y}-\lambda)$ is a monic quadratic polynomial in $\lambda$ with
real coefficients, the inequality in \eqref{detle0} implies that
its zeros $\lambda_\pm\binom{x}{y}$ are real and that
\[
  \lambda_+\binom{x}{y} \ge \lambda_0.
\]
This, together with Lemma~\ref{lamlem1}, proves \eqref{lambda0min}.

(ii)
Now assume that $\frs(\lambda)[x]<0$ for all $\lambda\in(\mu,\infty)$.
For each $\lambda\in(\mu,\infty)$ we obtain from Lemma~\ref{lamlem1} that there
exists a $y\in\dom(\frd)\backslash\{0\}$ such that $\lambda_+\binom{x}{y}\le\lambda$,
which implies \eqref{lambda0inf}.
\end{proof}

The next theorem contains a variational principle with the functional $\lambda_+$
that is connected with the quadratic numerical range of $\cM$.
It follows immediately from Theorem~\ref{finitevar}, Remark~\ref{rem:rayleighinf}
and Lemma~\ref{lamlem2}.
Similar results were obtained in \cite[Theorem~5.3]{LLMT05}
for bounded $A$, $B$, $D$ and in \cite[Theorem~4.2]{LLT02}
when $B$ is bounded and $W^2(\cM)$ consists of two separated components
and hence $\sigma(\cM)$ is real.

\begin{theorem}\label{finitevar2}
Suppose that Assumption~{\rm\ref{assump}} and \eqref{condkappa} are satisfied.
Let $\gamma_0$, $\kappa$, $\lae$, $N$, $(\lambda_j)_{j=1}^N$ be
as in Theorem~{\rm\ref{finitevar}}.
Then
\begin{align*}
  \lambda_n &= \min_{\substack{\cL\subset\dom(\fra) \\[0.2ex] \dim\cL=\kappa+n}}\;
  \max_{x\in\cL\setminus\{0\}}\;\;
  \inf_{y\in\dom(\frd)\setminus\{0\}} \Re\lambda_+\binom{x}{y}
  \\[1ex]
  &= \max_{\substack{\cL\subset\mathcal{H} \\[0.2ex] \dim\cL=\kappa+n-1}}
  \inf_{\substack{x\in\dom(\fra)\setminus\{0\} \\[0.2ex] x\perp\cL}}\;\;
  \inf_{y\in\dom(\frd)\setminus\{0\}} \Re\lambda_+\binom{x}{y}
\end{align*}
for $n\in\NN$, $n\le N$.
Moreover, if\, $N$ is finite and $\cH_1$ is infinite-dimensional,
then $\lae<\infty$ and
\begin{align*}
  \lae &= \min_{\substack{\cL\subset\dom(\fra) \\[0.2ex] \dim\cL=\kappa+n}}\;
  \max_{x\in\cL\setminus\{0\}}\;\;
  \inf_{y\in\dom(\frd)\setminus\{0\}} \Re\lambda_+\binom{x}{y}
  \\[1ex]
  &= \max_{\substack{\cL\subset\mathcal{H} \\[0.2ex] \dim\cL=\kappa+n-1}}
  \inf_{\substack{x\in\dom(\fra)\setminus\{0\} \\[0.2ex] x\perp\cL}}\;\;
  \inf_{y\in\dom(\frd)\setminus\{0\}} \Re\lambda_+\binom{x}{y}
\end{align*}
for $n>N$.
\end{theorem}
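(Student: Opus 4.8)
The plan is to obtain Theorem~\ref{finitevar2} directly from Theorem~\ref{finitevar} by showing that, in the variational formulas \eqref{var1} and \eqref{var2}, the generalised Rayleigh functional $p$ from Definition~\ref{def:rayleigh} may be replaced by
\[
  q(x) \defeq \inf_{y\in\dom(\frd)\setminus\{0\}}\Re\lambda_+\binom{x}{y},
  \qquad x\in\dom(\fra)\setminus\{0\}.
\]
First I would observe that the hypotheses here are exactly Assumption~\ref{assump} together with \eqref{condkappa}, which are the hypotheses of Theorem~\ref{finitevar}. Hence all the structural conclusions of that theorem carry over unchanged: $\mu<\lae$, the set $\sigma(\cM)\cap(\mu,\lae)$ is at most countable and consists of eigenvalues only, $\kappa=\kappa_-(S(\gamma_0))<\infty$, and $\lae<\infty$ whenever $N<\infty$ and $\cH_1$ is infinite-dimensional; moreover \eqref{var1} and \eqref{var2} hold with the functional $p$. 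Since $\gamma_0$, $\kappa$, $\lae$, $N$ and $(\lambda_j)_{j=1}^N$ are taken as in Theorem~\ref{finitevar}, it only remains to pass from $p$ to $q$ in the two chains of equalities.

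The core step is to identify $q$ with an admissible version of $p$ in the sense of Remark~\ref{rem:rayleighinf}. Let $x\in\dom(\fra)\setminus\{0\}$. If $p(x)=\lambda_0\in(\mu,\infty)$, i.e.\ $\frs(\lambda_0)[x]=0$, then Lemma~\ref{lamlem2}\,(i) shows that $\lambda_\pm\binom{x}{y}\in\RR$ for every $y\in\dom(\frd)\setminus\{0\}$, so $\Re\lambda_+\binom{x}{y}=\lambda_+\binom{x}{y}$, and by \eqref{lambda0min} the infimum over $y$ is attained and equals $\lambda_0$; hence $q(x)=p(x)$. If instead $p(x)=-\infty$, i.e.\ $\frs(\lambda)[x]<0$ for all $\lambda\in(\mu,\infty)$, then Lemma~\ref{lamlem2}\,(ii) gives $q(x)\le\mu$. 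Thus $q$ coincides with $p$ wherever $p$ takes a value in $(\mu,\infty)$, and at every $x$ with $p(x)=-\infty$ the value $q(x)$ lies in $[-\infty,\mu]$, which by Remark~\ref{rem:rayleighinf} (see \cite[\S2]{JLT}) is an admissible choice for the Rayleigh functional at such $x$. Consequently Theorem~\ref{finitevar}, applied with $p$ replaced by $q$, produces precisely the formulas asserted in Theorem~\ref{finitevar2}, for $n\le N$ and (when $N<\infty$ and $\dim\cH_1=\infty$) also for $n>N$.

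The only point requiring a little care — and essentially the only obstacle — is the case $p(x)=-\infty$: one must check that the substitution $p\to q$ is covered by the freedom described in Remark~\ref{rem:rayleighinf}, namely that for such $x$ the value of the functional may be chosen to be any element of $(-\infty,\mu]$ (the original convention $-\infty$ being harmless as well). This is exactly what Lemma~\ref{lamlem2}\,(ii) delivers, so no additional estimate is needed. Since the remaining quantities $\lae$, $N$, $\kappa$ and $\gamma_0$ do not depend on the choice of the functional, the theorem follows immediately, as stated in the text preceding it.
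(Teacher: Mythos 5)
Your proof is correct and takes essentially the same route as the paper, which simply records that Theorem~\ref{finitevar2} follows immediately from Theorem~\ref{finitevar}, Remark~\ref{rem:rayleighinf} and Lemma~\ref{lamlem2}; you have merely made explicit the identification of $q$ with an admissible version of $p$ (using Lemma~\ref{lamlem2}\,(i) when $p(x)\in(\mu,\infty)$ to get $q(x)=p(x)$, and Lemma~\ref{lamlem2}\,(ii) together with the freedom in Remark~\ref{rem:rayleighinf} when $p(x)=-\infty$).
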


\section{Eigenvalue estimates and asymptotics}\label{estimates}

\noindent
In this section we prove estimates for certain real eigenvalues of $\cM$.
In particular, we compare these eigenvalues with eigenvalues of $A$.
To this end, we denote by $\nu_1\le\nu_2\le\cdots$ the eigenvalues of $A$
that lie below $\min\sigmaess(A)$ counted according to multiplicities.
If $A$ has only finitely many eigenvalues, say $M$, below its essential spectrum
and $\cH_1$ is infinite-dimensional, then set $\nu_k\defeq\min\sigmaess(A)$ for $k>M$.

In the case when $A$ has compact resolvent we also show asymptotic estimates.
The following estimates are analogous to those found for upper dominant
self-adjoint matrices; see \cite[Section~4.1]{KLT04}.
The first inequality in \eqref{est1} below and \eqref{est2} were proved
in \cite[Theorem~5.2]{LLT02} under the extra assumption that $B$ is bounded
and $W^2(\cM)$ consists of two separated components and hence $\sigma(\cM)$ is real.

\begin{corollary}\label{ests}
Suppose that Assumption~{\rm\ref{assump}} and \eqref{condkappa} are satisfied.
Let $\gamma_0$, $\kappa$, $\lae$, $N$, $(\lambda_j)_{j=1}^N$ be
as in Theorem~{\rm\ref{finitevar}}.
If\, $N$ is finite, then set $\lambda_n\defeq\lae$ for $n>N$.
Moreover, let $\nu_k$ be as above.  Then
\begin{equation}\label{est1}
  \frac{\nu_{\kappa+n}+\delta_+}{2} + \sqrt{\biggl(\biggl(\frac{\nu_{\kappa+n}-\delta_+}{2}\biggr)^2
  - b\nu_{\kappa+n} - a\biggr)_+}
  \le \lambda_n \le \nu_{\kappa+n}
\end{equation}
for $n\in\NN$ such that $\kappa+n\le\dim\cH_1$.

Assume, in addition, that $D$ is bounded, set $\delta_-\defeq\min\sigma(D)$,
and let $\hat{a}\in\RR$ and $\hat{b}\ge0$ be such that
\begin{equation}\label{Blowest}
  \|B^*x\|^2\ge\hat{a}\|x\|^2 + \hat{b}\fra[x], \qquad x\in\dom(\fra).
\end{equation}
Then
\begin{equation}\label{discr_est}
  \biggl(\frac{\nu_{\kappa+n}-\delta_-}{2}\biggr)^2 - \hat{b}\nu_{\kappa+n} - \hat{a} \ge 0
\end{equation}
and
\begin{equation}\label{est2}
  \lambda_n \le \frac{\nu_{\kappa+n}+\delta_-}{2}
  + \sqrt{\biggl(\frac{\nu_{\kappa+n}-\delta_-}{2}\biggr)^2 - \hat{b}\nu_{\kappa+n} - \hat{a}\,}
\end{equation}
for $n\in\NN$ such that $\kappa+n\le\dim\cH_1$.
\end{corollary}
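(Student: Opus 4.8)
The plan is to combine the variational characterisation \eqref{var1} with the classical min--max principle for the self-adjoint operator $A$ (applied to the form $\fra$), using only the elementary bounds for $\frs(\lambda)$ that come from \eqref{domcons}, \eqref{Blowest} and the sign of $(D-\lambda)^{-1}$ for $\lambda>\delta_+$. For the upper bound $\lambda_n\le\nu_{\kappa+n}$ I would first note that $\nu_{\kappa+n}>\mu$: otherwise $A$ has at least $\kappa+n$ spectral values below $\gamma_0$, and since $(D-\gamma_0)^{-1}\le0$ gives $\frs(\gamma_0)[x]\le(\fra-\gamma_0)[x]$ for all $x\in\dom(\fra)$, this would force $\kappa=\kappa_-(S(\gamma_0))\ge\kappa+n$. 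By min--max for $A$, choose a subspace $\cL\subset\dom(\fra)$ with $\dim\cL=\kappa+n$ and $\fra[x]\le\nu_{\kappa+n}\|x\|^2$ on $\cL$; then for $x\in\cL\setminus\{0\}$, using $\nu_{\kappa+n}>\delta_+$,
\[
  \frs(\nu_{\kappa+n})[x]=\fra[x]-\nu_{\kappa+n}\|x\|^2+\bigl\langle(D-\nu_{\kappa+n})^{-1}B^*x,B^*x\bigr\rangle\le0 .
\]
Since by Lemma~\ref{vmlem} the function $\frs(\cdot)[x]$ is decreasing at each of its zeros in $(\mu,\infty)$ and hence has at most one such zero, this gives $p(x)\le\nu_{\kappa+n}$ (with $p(x)=-\infty$ allowed), and the first equality in \eqref{var1} yields $\lambda_n\le\max_{x\in\cL\setminus\{0\}}p(x)\le\nu_{\kappa+n}$.

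For the lower bound in \eqref{est1} I would apply the second equality in \eqref{var1} with $\cL_0\subset\cH_1$ the span of eigenvectors of $A$ for $\nu_1,\dots,\nu_{\kappa+n-1}$, so that $\fra[x]\ge\nu_{\kappa+n}\|x\|^2$ for every $x\in\dom(\fra)$ with $x\perp\cL_0$; it then suffices to show $p(x)\ge L$ for such $x$, where $L$ is the left-hand side of \eqref{est1}. For $\lambda\in(\mu,\infty)$ one has $\lambda-\delta_+>b$, and \eqref{domcons} together with $0\le(\lambda-D)^{-1}\le(\lambda-\delta_+)^{-1}$ yields
\[
  \frs(\lambda)[x]\ge\fra[x]-\lambda\|x\|^2-\frac{\|B^*x\|^2}{\lambda-\delta_+}
  \ge\Bigl(1-\frac{b}{\lambda-\delta_+}\Bigr)\bigl(\fra[x]-g(\lambda)\|x\|^2\bigr),\qquad
  g(\lambda)=\frac{\lambda(\lambda-\delta_+)+a}{\lambda-\delta_+-b},
\]
with positive prefactor. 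Hence $\frs(\lambda)[x]>0$ whenever $g(\lambda)<\nu_{\kappa+n}$; as $\frs(\lambda)[x]\to-\infty$ for $\lambda\to\infty$, the zero $p(x)$ of $\frs(\cdot)[x]$ is at least the largest root of $g=\nu_{\kappa+n}$. A direct computation identifies $g(\lambda)\le\nu_{\kappa+n}$ on $(\delta_++b,\infty)$ with the monic quadratic inequality $\lambda^2-(\delta_++\nu_{\kappa+n})\lambda+\nu_{\kappa+n}(\delta_++b)+a\le0$, whose larger root equals $L$; the $(\,\cdot\,)_+$ covers the negative-discriminant case, in which $L\le\mu<p(x)$ (using Remark~\ref{rem:rayleighinf} when $p(x)=-\infty$). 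One also checks that this root is nondecreasing in $\nu_{\kappa+n}$ on the relevant range (where it would decrease it already lies below $\mu$), so that $\fra[x]\ge\nu_{\kappa+n}\|x\|^2$ is enough.

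The bounds \eqref{discr_est} and \eqref{est2} are obtained by running the same argument in the opposite direction. Let $x^*$ realise the maximum in the first equality of \eqref{var1} for the subspace $\cL$ of the upper-bound step — the maximum is attained and $p(x^*)=\lambda_n$ is finite by the variational framework of \cite{EL04}, and $\fra[x^*]\le\nu_{\kappa+n}\|x^*\|^2$. From $\frs(\lambda_n)[x^*]=0$, the lower bound \eqref{Blowest}, and $(D-\lambda_n)^{-1}\le(\delta_--\lambda_n)^{-1}$ (valid since $D$ is bounded with $\min\sigma(D)=\delta_-$, and $\lambda_n-\delta_->\hat b$ because $\hat b\le b<\lambda_n-\delta_+$) one gets $0\le(1-\hat b/(\lambda_n-\delta_-))\fra[x^*]-(\lambda_n+\hat a/(\lambda_n-\delta_-))\|x^*\|^2$; combined with $\fra[x^*]\le\nu_{\kappa+n}\|x^*\|^2$ this gives $\lambda_n^2-(\delta_-+\nu_{\kappa+n})\lambda_n+\nu_{\kappa+n}(\delta_-+\hat b)+\hat a\le0$. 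Since this monic quadratic in $\lambda_n$ takes a non-positive value, its discriminant $\bigl(\frac{\nu_{\kappa+n}-\delta_-}{2}\bigr)^2-\hat b\nu_{\kappa+n}-\hat a$ is non-negative, which is \eqref{discr_est}, and $\lambda_n$ cannot exceed its larger root, which is \eqref{est2}.

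Finally, if $N$ is finite and $n>N$ the same reasoning applied to $\lae$ via \eqref{var2} gives the claims for $\lambda_n=\lae$, and when $A$ has compact resolvent every admissible $n$ is covered with $N=\infty$ by Corollary~\ref{co:comp_res1}. I expect the main obstacle to be the bookkeeping of the degenerate cases rather than the underlying idea: the pole of $g$ at $\delta_++b$, the convention $\nu_k=\min\sigmaess(A)$ (which requires a standard $\varepsilon$-perturbation of the test subspaces), the inequality $\nu_{\kappa+n}>\mu$, the attainment of the maximiser $x^*$, the monotonicity of the root defining $L$, and the sign of $1-\hat b/(\lambda_n-\delta_-)$, together with verifying that the two quadratic inequalities have exactly the roots displayed in \eqref{est1} and \eqref{est2}.
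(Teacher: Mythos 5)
Your approach matches the paper's in its essentials: both proofs combine the variational characterisation \eqref{var1}/\eqref{var2} with the classical min--max principle for $A$, and use one-sided estimates for $\frs(\lambda)$ (from \eqref{domcons} for the lower bound, from \eqref{Blowest} for the upper bound) to reduce each claimed enclosure to a monic quadratic inequality in $\lambda$ whose larger root is the displayed expression. The differences are cosmetic: for \eqref{est1} you route the upper bound through the auxiliary observation $\nu_{\kappa+n}>\mu$, whereas the paper proves $p(x)\le\fra[x]/\|x\|^2$ pointwise and applies min--max directly; for the lower bound you use the max--inf form of \eqref{var1} with a fixed test subspace, the paper uses the min--max form with a test vector $x_\cL$ inside an arbitrary subspace, which avoids the eigenvector-availability issue you flag.

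There is one genuine (though small) gap in your argument for \eqref{discr_est}/\eqref{est2}: you assert $p(x^*)=\lambda_n$, but $\cL$ was chosen as the min--max subspace for $A$ (so that $\fra[x]\le\nu_{\kappa+n}\|x\|^2$ on $\cL$), not as a subspace realising the minimum in \eqref{var1}. Consequently only $\lambda_n\le\max_{x\in\cL\setminus\{0\}}p(x)=p(x^*)$ is available, and you should run the quadratic computation with $\Lambda\defeq p(x^*)$, conclude $\Lambda$ does not exceed the larger root, and finish with $\lambda_n\le\Lambda$. In fact the paper avoids extracting a maximiser altogether: it shows $p(x)\le\hat\mu_{n,\eps,+}$ for \emph{every} $x$ in an $\eps$-perturbed test subspace, takes the max, and sends $\eps\to0$ at the end, which simultaneously handles the case that $\nu_{\kappa+n}$ is not an eigenvalue. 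Your remaining flagged issues (the $\eps$-perturbation needed for the convention $\nu_k=\min\sigmaess(A)$, the invocation of Lemma~\ref{le:nott} in the negative-discriminant case, the sign of the prefactor $1-\hat b/(\lambda-\delta_-)$) are indeed the bookkeeping points the paper has to address, while the worry about monotonicity of the lower-bound root in $\nu_{\kappa+n}$ is a red herring: you already conclude $p(x)\ge L$ directly from $g(\lambda)<\nu_{\kappa+n}\Rightarrow\frs(\lambda)[x]>0$ together with $\frs(\lambda)[x]\to-\infty$.
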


\begin{proof}
Throughout the proof let $n\in\NN$ such that $\kappa+n\le\dim\cH_1$.
First we show that
\begin{equation}\label{plea}
  p(x) \le \frac{\fra[x]}{\|x\|^2}\,, \qquad x\in\dom(\fra)\backslash\{0\}.
\end{equation}
If $p(x)=-\infty$, then the statement is trivial.
Otherwise, we have $p(x)>\mu\ge\delta_+$, and therefore
\begin{align*}
  0 &= \frs\bigl(p(x)\bigr)[x]
  = \fra[x] - p(x)\|x\|^2 + \bigl\langle\bigl(D-p(x)\bigr)^{-1}B^*x,B^*x\bigr\rangle
  \\[0.5ex]
  &\le \fra[x] - p(x)\|x\|^2,
\end{align*}
which implies \eqref{plea}.  Now \eqref{var1}, \eqref{var2} and the standard variational principle
for self-adjoint operators imply that
\[
  \lambda_n = \min_{\substack{\cL\subset\dom(\fra) \\[0.2ex] \dim\cL=\kappa+n}}\;
  \max_{x\in\cL\setminus\{0\}}\;\;p(x)
  \le \min_{\substack{\cL\subset\dom(\fra) \\[0.2ex] \dim\cL=\kappa+n}}\;
  \max_{x\in\cL\setminus\{0\}}\;\;\frac{\fra[x]}{\|x\|^2}
  = \nu_{\kappa+n},
\]
which shows the second inequality in \eqref{est1}.

If the expression
\begin{equation}\label{discr_neg}
  \biggl(\frac{\nu_{\kappa+n}-\delta_+}{2}\biggr)^2
  - b\nu_{\kappa+n} - a
\end{equation}
is negative, then the left-hand side of \eqref{est1} is equal to $(\nu_{\kappa+n}+\delta_+)/2$,
which, by Lemma~\ref{le:nott}, satisfies
\[
  \frac{\nu_{\kappa+n}+\delta_+}{2} \le \delta_++b+\sqrt{b\delta_++b^2+a}
  = \mu < \lambda_n.
\]
Hence the first inequality in \eqref{est1} is proved in this case.

Now assume that the expression in \eqref{discr_neg} is non-negative.
Let $x\in\dom(\fra)\backslash\{0\}$ and $\lambda>\mu$.  From \eqref{domcons} we obtain
\begin{align*}
  \frs(\lambda)[x] &= \fra[x] - \lambda\|x\|^2 - \bigl\|(\lambda-D)^{-\frac12}B^*x\bigr\|^2
  \\[0.5ex]
  &\ge \fra[x] - \lambda\|x\|^2 - \frac{\|B^*x\|^2}{\lambda-\delta_+}
  \\[0.5ex]
  &\ge \fra[x] - \lambda\|x\|^2 - \frac{b\fra[x]+a\|x\|^2}{\lambda-\delta_+}
  \\[0.5ex]
  &= \frac{(\lambda-\delta_+-b)\fra[x]-(\lambda^2-\delta_+\lambda+a)\|x\|^2}{\lambda-\delta_+}\,.
\end{align*}
It follows from the standard variational principle for self-adjoint operators that,
for every $(\kappa+n)$-dimensional subspace $\cL\subset\dom(\fra)$, there exists
an $x_\cL\in\cL$ such that $\|x_\cL\|=1$ and $\fra[x_\cL]\ge\nu_{\kappa+n}$.
Then
\begin{equation}\label{234}
\begin{aligned}
  \frs(\lambda)[x_\cL] &\ge \frac{1}{\lambda-\delta_+}\Bigl[
  (\lambda-\delta_+-b)\nu_{\kappa+n}-\lambda^2+\delta_+\lambda-a\Bigr]
  \\[0.5ex]
  &= -\frac{1}{\lambda-\delta_+}\Bigl[\lambda^2-(\nu_{\kappa+n}+\delta_+)\lambda
  + \delta_+\nu_{\kappa+n} + b\nu_{\kappa+n} + a\Bigr].
\end{aligned}
\end{equation}
Since the expression in \eqref{discr_neg} is non-negative, the polynomial
in $\lambda$ within the square brackets has real zeros.
The larger of these zeros is equal to the left-hand side of \eqref{est1},
which we denote by $\mu_n$.  From \eqref{234} we obtain $\frs(\mu_n)[x_\cL]\ge0$
and hence $p(x_\cL)\ge\mu_n$ since $\frs$
satisfies the condition \VM.
Now \eqref{var1}, \eqref{var2} imply that
\[
  \lambda_n = \inf_{\substack{\cL\subset\dom(\fra) \\[0.2ex] \dim\cL=\kappa+n}}\;
  \max_{x\in\cL\setminus\{0\}}\;\;p(x)
  \ge \inf_{\substack{\cL\subset\dom(\fra) \\[0.2ex] \dim\cL=\kappa+n}}\;p(x_\cL)
  \ge \mu_n,
\]
which is the first inequality in \eqref{est1}.

Now assume that $D$ is bounded and that \eqref{Blowest} is satisfied.
Let $\eps>0$ be arbitrary.
By the standard variational principle applied to $A$
there exists an $\cL_0\subset\dom(\fra)$ with $\dim\cL_0=\kappa+n$ such
that $\fra[x]\le(\nu_{\kappa+n}+\eps)\|x\|^2$ for all $x\in\cL_0$
(if $\nu_{\kappa+n}$ is an eigenvalue, we could choose $\eps=0$).
From \eqref{var1} and \eqref{var2} we obtain that
\begin{equation}\label{partsubsp}
  \lambda_n = \inf_{\substack{\cL\subset\dom(\fra) \\[0.2ex] \dim\cL=\kappa+n}}\;
  \max_{x\in\cL\setminus\{0\}}\;\;p(x)
  \le \max_{x\in\cL_0\setminus\{0\}}\;\;p(x).
\end{equation}
For $x\in\cL_0\backslash\{0\}$ and $\lambda\in(\mu,\infty)$ we have
\begin{align}
  \frs(\lambda)[x] &= \fra[x] - \lambda\|x\|^2 - \bigl\langle(\lambda-D)^{-1}B^*x,B^*x\bigr\rangle
  \notag\\[0.5ex]
  &\le \fra[x] - \lambda\|x\|^2 - \frac{\|B^*x\|^2}{\lambda-\delta_-}
  \notag\displaybreak[0]\\[0.5ex]
  &\le \fra[x] - \lambda\|x\|^2 - \frac{\hat b\fra[x]+\hat a\|x\|^2}{\lambda-\delta_-}
  \notag\displaybreak[0]\\[0.5ex]
  &\le \frac{(\lambda-\delta_--\hat b)(\nu_{\kappa+n}+\eps)
  -\lambda^2+\delta_-\lambda-\hat a}{\lambda-\delta_-}\|x\|^2
  \notag\\[0.5ex]
  &= -\frac{\lambda^2-(\nu_{\kappa+n}+\eps+\delta_-)\lambda
  +\delta_-(\nu_{\kappa+n}+\eps)+\hat b(\nu_{\kappa+n}+\eps)+\hat a}{\lambda-\delta_-}\|x\|^2.
  \label{estsabove}
\end{align}
Let $\hat\mu_{n,\eps,\pm}$ be the zeros of the polynomial in $\lambda$ in the
numerator of the fraction in \eqref{estsabove}, i.e.\
\[
  \hat\mu_{n,\eps,\pm} = \frac{\nu_{\kappa+n}+\eps+\delta_-}{2}
  \pm \sqrt{\biggl(\frac{\nu_{\kappa+n}+\eps-\delta_-}{2}\biggr)^2-\hat b(\nu_{\kappa+n}+\eps)-\hat a\,}.
\]
If these zeros are non-real or $\hat\mu_{n,\eps,+}\le\mu$, then $\frs(\lambda)[x]<0$
for all $\lambda\in(\mu,\infty)$ and hence $p(x)=-\infty$ for all $x\in\cL_0\backslash\{0\}$.
This, together with \eqref{partsubsp}, would imply that $\lambda_n=-\infty$, a contradiction.
Therefore $\hat\mu_{n,\eps,\pm}\in\RR$ and $\hat\mu_{n,\eps,+}>\mu$.
In particular,
\begin{equation}\label{discr_eps}
  \biggl(\frac{\nu_{\kappa+n}+\eps-\delta_-}{2}\biggr)^2-\hat b(\nu_{\kappa+n}+\eps)-\hat a \ge 0.
\end{equation}
Relation \eqref{estsabove} yields that $\frs(\lambda)[x]<0$
for all $\lambda\in(\hat\mu_{n,\eps,+},\infty)$ and hence $p(x)\le\hat\mu_{n,\eps,+}$
for all $x\in\cL_0\backslash\{0\}$.  This, together with \eqref{partsubsp} implies
that $\lambda_n\le\hat\mu_{n,\eps,+}$.
If we take the limit as $\eps\to0$ in the latter inequality and in \eqref{discr_eps},
we obtain \eqref{discr_est} and \eqref{est2}.
\end{proof}

Under the extra assumption that $A$ has compact resolvent we can obtain
asymptotic estimates for the eigenvalues of $\cM$ that lie in $(\mu,\infty)$.
Analogous estimates for self-adjoint block operator matrices were shown
in \cite[Corollary~4.4]{KLT04}.

\begin{corollary}\label{compres2}
Suppose that Assumption~{\rm\ref{assump}} is satisfied, let $\cM$ be the operator
as in Theorem~{\rm\ref{jsaness}}, let $\mu$ be as in \eqref{defmu} and
let $\nu_k$ be as at the beginning of the section.
Assume that $A$ has compact resolvent and that $\cH_1$ is infinite-dimensional.
Then $(\mu,\infty)\cap\sigma(\cM)$ consists of a sequence of
eigenvalues that tends to $\infty$.
Let $\gamma_0\in(\mu,\infty)$, let $\kappa$ be as in \eqref{defkappa} and
let $(\lambda_n)_{n=1}^\infty$ be the eigenvalues in $[\gamma_0,\infty)$.
Then
\begin{equation}\label{asympest1}
  \nu_{\kappa+n} - b - \frac{b^2+b\delta_++a}{\nu_{\kappa+n}-\delta_+}
  + \cO\biggl(\frac{1}{\nu_{\kappa+n}^2}\biggr)
  \le \lambda_n \le \nu_{\kappa+n},
  \qquad n\to\infty.
\end{equation}
If, in addition, $D$ is bounded with $\delta_-$, $\hat{a}$ and $\hat{b}$
as in Corollary~{\rm\ref{ests}}, then
\begin{equation}\label{asympest2}
  \lambda_n \le \nu_{\kappa+n} - \hat{b}
  - \frac{\hat{b}^2+\hat{b}\delta_-+\hat{a}}{\nu_{\kappa+n}-\delta_-}
  + \cO\biggl(\frac{1}{\nu_{\kappa+n}^2}\biggr),
  \qquad n\to\infty.
\end{equation}
\end{corollary}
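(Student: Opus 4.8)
The plan is to combine the two-sided eigenvalue bounds from Corollary~\ref{ests} with an elementary asymptotic expansion of the square roots occurring there, exploiting that $\nu_k\to\infty$ when $A$ has compact resolvent and $\cH_1$ is infinite-dimensional.

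First I would invoke Corollary~\ref{co:comp_res1}, whose hypotheses are exactly those assumed here: it yields $\kappa_-(S(\gamma))<\infty$ for every $\gamma\in(\mu,\infty)$ (so \eqref{condkappa} holds), $\sigmaess(\cM)\cap(\mu,\infty)=\varnothing$, $\lae=\infty$, and $(\mu,\infty)\cap\sigma(\cM)$ an infinite sequence of eigenvalues accumulating only at $\infty$; moreover the variational principle \eqref{var1} is valid and, since $N=\infty$, Corollary~\ref{ests} applies without modification of $\lambda_n$, so the bounds \eqref{est1} hold for all $n\in\NN$ (as $\dim\cH_1=\infty$). Because $A$ has compact resolvent, $\nu_{\kappa+n}\to\infty$ as $n\to\infty$, so for all sufficiently large $n$ the quantity $\bigl(\tfrac{\nu_{\kappa+n}-\delta_+}{2}\bigr)^2-b\nu_{\kappa+n}-a$ is positive and the positive-part brackets in \eqref{est1} may be dropped.

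Next I would carry out the expansion. Writing $\nu\defeq\nu_{\kappa+n}$ and completing the square gives the identity $\bigl(\tfrac{\nu-\delta_+}{2}\bigr)^2-b\nu-a=\bigl(\tfrac{\nu-\delta_+}{2}-b\bigr)^2-(b^2+b\delta_++a)$, hence for large $\nu$
\[
  \sqrt{\Bigl(\tfrac{\nu-\delta_+}{2}-b\Bigr)^2-(b^2+b\delta_++a)}
  = \Bigl(\tfrac{\nu-\delta_+}{2}-b\Bigr) - \frac{b^2+b\delta_++a}{\nu-\delta_+-2b} + \cO\!\Bigl(\tfrac{1}{\nu^3}\Bigr).
\]
Adding $\tfrac{\nu+\delta_+}{2}$, using $\tfrac{\nu+\delta_+}{2}+\tfrac{\nu-\delta_+}{2}-b=\nu-b$ together with $\tfrac{1}{\nu-\delta_+-2b}=\tfrac{1}{\nu-\delta_+}+\cO(\tfrac{1}{\nu^2})$, the left-hand side of \eqref{est1} equals $\nu-b-\tfrac{b^2+b\delta_++a}{\nu-\delta_+}+\cO(\tfrac{1}{\nu^2})$, which together with the upper bound $\lambda_n\le\nu$ in \eqref{est1} gives \eqref{asympest1}. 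For \eqref{asympest2} I would argue in the same way from the upper bound \eqref{est2}, which is valid once $D$ is bounded and \eqref{Blowest} holds (with \eqref{discr_est} ensuring the discriminant is nonnegative, and the fact that it grows like $\nu^2/4$ justifying the expansion for large $n$); repeating the completing-the-square and Taylor steps with $\delta_+$, $a$, $b$ replaced by $\delta_-$, $\hat a$, $\hat b$ yields $\tfrac{\nu+\delta_-}{2}+\sqrt{\bigl(\tfrac{\nu-\delta_-}{2}\bigr)^2-\hat b\nu-\hat a}=\nu-\hat b-\tfrac{\hat b^2+\hat b\delta_-+\hat a}{\nu-\delta_-}+\cO(\tfrac{1}{\nu^2})$, which is \eqref{asympest2}.

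Since everything reduces to quoting the already-established Corollaries~\ref{co:comp_res1} and \ref{ests} and then expanding, there is no genuine obstacle; the only point that needs a little care is bookkeeping of the error terms — checking that the remainder in the square-root expansion and the correction $\tfrac{1}{\nu-\delta_+-2b}-\tfrac{1}{\nu-\delta_+}$ are both $\cO(\nu^{-2})$, and that all $\cO$-terms may legitimately be written with $\nu_{\kappa+n}$ in the denominator because $\nu_{\kappa+n}\to\infty$.
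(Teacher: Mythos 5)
Your proposal is correct and follows essentially the same route as the paper: both invoke Corollary~\ref{co:comp_res1} to reduce matters to the two-sided bounds of Corollary~\ref{ests} and then perform a Taylor expansion of the square root in \eqref{est1} (and \eqref{est2}). The only cosmetic difference is that you complete the square inside the radical before expanding, whereas the paper factors out $\tfrac{\nu_{\kappa+n}-\delta_+}{2}$ and uses the binomial series for $\sqrt{1-t}$; both lead to the same asymptotics with the same $\cO(\nu_{\kappa+n}^{-2})$ error.
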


\begin{proof}
The first statements follow from Corollary~\ref{co:comp_res1}.
For the estimates we use Corollary~\ref{ests}, which for sufficiently large $n$ yields
\begin{align*}
  \lambda_n &\ge \frac{\nu_{\kappa+n}+\delta_+}{2}
  + \sqrt{\biggl(\frac{\nu_{\kappa+n}-\delta_+}{2}\biggr)^2-b\nu_{\kappa+n}-a\,}
  \\[0.5ex]
  &= \frac{\nu_{\kappa+n}+\delta_+}{2} + \biggl(\frac{\nu_{\kappa+n}-\delta_+}{2}\biggr)
  \sqrt{1-\biggl(\frac{2}{\nu_{\kappa+n}-\delta_+}\biggr)^2\bigl(b\nu_{\kappa+n}+a\bigr)\,}
  \displaybreak[0]\\[0.5ex]
  &= \nu_{\kappa+n}
  - \frac{1}{2}\biggl(\frac{2}{\nu_{\kappa+n}-\delta_+}\biggr)\bigl(b\nu_{\kappa+n}+a\bigr)
  - \frac{1}{8}\biggl(\frac{2}{\nu_{\kappa+n}-\delta_+}\biggr)^3\bigl(b\nu_{\kappa+n}+a\bigr)^2
  - \ldots
  \displaybreak[0]\\[0.5ex]
  &= \nu_{\kappa+n} - \frac{b\nu_{\kappa+n}+a}{\nu_{\kappa+n}-\delta_+}
  - \frac{(b\nu_{\kappa+n}+a)^2}{(\nu_{\kappa+n}-\delta_+)^3}
  + \cO\biggl(\frac{1}{\nu_{\kappa+n}^2}\biggr)
  \\[0.5ex]
  &= \nu_{\kappa+n} - b - \frac{b\delta_++b^2+a}{\nu_{\kappa+n}-\delta_+}
  + \cO\biggl(\frac{1}{\nu_{\kappa+n}^2}\biggr).
\end{align*}
If $D$ is bounded, we obtain from \eqref{est2} in a similar way that
\begin{align*}
  \lambda_n &\le \frac{\nu_{\kappa+n}+\delta_-}{2}
  + \sqrt{\biggl(\frac{\nu_{\kappa+n}-\delta_-}{2}\biggr)^2 - \hat{b}\nu_{\kappa+n} - \hat{a}\,}
  \\[0.5ex]
  &= \nu_{\kappa+n} - \hat{b} - \frac{\hat{b}\delta_-+\hat{b}^2+\hat{a}}{\nu_{\kappa+n}-\delta_-}
  + \cO\biggl(\frac{1}{\nu_{\kappa+n}^2}\biggr).
\end{align*}
\end{proof}

\section{Spectrum of positive type}\label{krein}

\noindent
In this section we consider properties of the operator $\cM$ considered
in the Krein space $\cK\defeq\cH_1\oplus\cH_2$ equipped with the
indefinite inner product \eqref{indef_pr}.
Recall that a point $\lambda\in\sigmaapp(\cM)$ is called
\emph{spectral point of positive type} if for every sequence $(x_n,y_n)^T\in\dom(\cM)$
such that
\begin{equation}\label{defpostype1}
  (\cM-\lambda)\binom{x_n}{y_n} \to 0 \qquad\text{and}\qquad
  \|x_n\|^2+\|y_n\|^2 = 1
\end{equation}
one has
\begin{equation}\label{defpostype2}
  \liminf_{n\to\infty} \biggl[\binom{x_n}{y_n},\binom{x_n}{y_n}\biggr] > 0.
\end{equation}

\medskip

In the next theorem we consider spectral points in the interval $(\mu,\infty)$.
For bounded operators $\cM$ the result was shown in \cite[Theorem~3.1]{LLMT05}.

\begin{theorem}\label{th:postype}
Suppose that Assumption~{\rm\ref{assump}} is satisfied, let $\cM$ be the operator
as in Theorem~{\rm\ref{jsaness}} and let $\mu$ be as in \eqref{defmu}.
Then all points from $\sigma(\cM)\cap(\mu,\infty)$
are spectral points of positive type.
\end{theorem}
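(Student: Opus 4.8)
The plan is to take an approximate eigensequence $(x_n,y_n)^T\in\dom(\cM)$ for $\lambda\in\sigma(\cM)\cap(\mu,\infty)$ normalised by $\|x_n\|^2+\|y_n\|^2=1$ and to show that its indefinite norm $\|x_n\|^2-\|y_n\|^2$ stays bounded away from $0$ from above. Since $\lambda\in(\mu,\infty)\subset\UU$ and $\lambda>\delta_+$, Lemma~\ref{le:approx_es} is available: part~(ii) gives $y_n=(D-\lambda)^{-1}B^*x_n+w_n$ with $w_n\to0$, part~(i) gives that $\fra[x_n]$ and $\|B^*x_n\|$ are bounded, part~(iii) gives $\liminf_n\|x_n\|>0$ and $\frs(\lambda)[x_n]\to0$. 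The key identity to exploit is the one coming from the second form equation: from \eqref{form2} applied to $(x_n,y_n)$ and the fact that $(\cM-\lambda)(x_n,y_n)^T\to0$, one gets $-\langle B^*x_n,y_n\rangle+\frd[y_n]-\lambda\|y_n\|^2\to0$, and combined with \eqref{yn} this should yield an asymptotic relation between $\|y_n\|^2$ and a quantity like $\langle (D-\lambda)^{-1}B^*x_n,B^*x_n\rangle$ or $\|(D-\lambda)^{-\frac12}B^*x_n\|^2$, which is nonpositive since $\lambda>\delta_+$.

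The heart of the argument is then the following. Using $\frs(\lambda)[x_n]\to0$ and the expression \eqref{frs1}, namely $\frs(\lambda)[x_n]=\fra[x_n]-\lambda\|x_n\|^2-\|(\lambda-D)^{-\frac12}B^*x_n\|^2$, one has $\fra[x_n]-\lambda\|x_n\|^2=\|(\lambda-D)^{-\frac12}B^*x_n\|^2+o(1)$. On the other hand, from \eqref{yn} and the second component, $\|y_n\|^2=\|(D-\lambda)^{-1}B^*x_n\|^2+o(1)$. The plan is to show that
\[
  \|x_n\|^2-\|y_n\|^2 = \|x_n\|^2 - \bigl\|(D-\lambda)^{-1}B^*x_n\bigr\|^2 + o(1)
\]
is bounded below by a positive constant. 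For this I would use the spectral representation of $D$: writing $B^*x_n=\zeta_n$, we have $\|(D-\lambda)^{-1}\zeta_n\|^2 = \int (t-\lambda)^{-2}\,d\langle E_t\zeta_n,\zeta_n\rangle$ and $\|(\lambda-D)^{-\frac12}\zeta_n\|^2=\int(\lambda-t)^{-1}d\langle E_t\zeta_n,\zeta_n\rangle$ (valid since $\lambda>\delta_+\ge t$ on $\sigma(D)$). Since $\lambda-t\ge\lambda-\delta_+>0$, we get $\|(D-\lambda)^{-1}\zeta_n\|^2\le(\lambda-\delta_+)^{-1}\|(\lambda-D)^{-\frac12}\zeta_n\|^2$, hence $\|y_n\|^2\le(\lambda-\delta_+)^{-1}(\fra[x_n]-\lambda\|x_n\|^2)+o(1)$. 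The relative form bound \eqref{domcons}, $\|B^*x_n\|^2\le a\|x_n\|^2+b\fra[x_n]$, should then let me bound $\fra[x_n]$ in terms of $\|x_n\|^2$ after using $\|(\lambda-D)^{-\frac12}B^*x_n\|^2\le(\lambda-\delta_+)^{-1}\|B^*x_n\|^2$ in the displayed $\frs$-identity; rearranging exactly as in the proof of Lemma~\ref{vmlem} gives $(\lambda-\delta_+-b)\|(\lambda-D)^{-\frac12}B^*x_n\|^2\le(b\lambda+a)\|x_n\|^2+o(1)$, and since $\lambda>\mu\ge\delta_++b$ this controls everything. Combining these estimates, $\|y_n\|^2\le C_\lambda\|x_n\|^2+o(1)$ for a constant $C_\lambda$ that I expect to be strictly less than $1$ precisely when $\lambda>\mu$; then $\|x_n\|^2-\|y_n\|^2\ge(1-C_\lambda)\|x_n\|^2-o(1)$, and with $\liminf_n\|x_n\|>0$ from Lemma~\ref{le:approx_es}\,(iii) we conclude $\liminf_n(\|x_n\|^2-\|y_n\|^2)>0$.

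The main obstacle will be getting the constant $C_\lambda<1$ sharp, i.e.\ verifying that the threshold where $C_\lambda=1$ is exactly $\lambda=\mu$ as defined in \eqref{defmu}; this is a purely computational matter of combining the two spectral inequalities with \eqref{domcons} and tracking the quadratic $\lambda^2-2(\delta_++b)\lambda+\delta_+^2+b\delta_+-a$, whose larger root is $\mu$ — the same quadratic already appearing in the proof of Lemma~\ref{vmlem}. A cleaner route, which I would try first, is to avoid computing $C_\lambda$ explicitly: instead combine the relation $\frs(\lambda)[x_n]\to0$ with $\frs'(\lambda)[x_n]=-\|x_n\|^2+\|(\lambda-D)^{-1}B^*x_n\|^2=-(\|x_n\|^2-\|y_n\|^2)+o(1)$ from \eqref{frs2} and \eqref{yn}, and invoke the condition \VM{} established in Lemma~\ref{vmlem}: for any compact $I\subset(\mu,\infty)$ containing $\lambda$ there are $\varepsilon,\delta>0$ with $|\frs(\lambda)[x]|\le\varepsilon\|x\|^2\Rightarrow\frs'(\lambda)[x]\le-\delta\|x\|^2$. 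Since $|\frs(\lambda)[x_n]|\le\varepsilon\|x_n\|^2$ for large $n$ (as $\frs(\lambda)[x_n]\to0$ while $\|x_n\|$ stays bounded below), we get $\frs'(\lambda)[x_n]\le-\delta\|x_n\|^2$, i.e.\ $\|x_n\|^2-\|y_n\|^2\ge\delta\|x_n\|^2+o(1)$, and $\liminf_n\|x_n\|>0$ finishes the proof. This VM-based argument is short and robust, so I would present it as the main line and relegate the spectral-calculus estimates to the background already contained in Lemma~\ref{vmlem}.
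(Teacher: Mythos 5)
Your ``cleaner route'' at the end is essentially identical to the paper's own proof: both invoke Lemma~\ref{le:approx_es} to get $\frs(\lambda)[x_n]\to0$, $\liminf_n\|x_n\|>0$, and $y_n=(D-\lambda)^{-1}B^*x_n+w_n$ with $w_n\to0$, then apply the \VM{} property from Lemma~\ref{vmlem} to conclude $\frs'(\lambda)[x_n]\le-\delta\|x_n\|^2$ for large $n$, and finally use \eqref{frs2} to identify $\|x_n\|^2-\|y_n\|^2=-\frs'(\lambda)[x_n]+\rmo(1)\ge\delta\|x_n\|^2+\rmo(1)$. Your preliminary $C_\lambda$-estimate route would also work but is indeed the less economical path, and you correctly chose to lead with the VM-based argument.
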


\begin{proof}
Let $\lambda\in\sigma(\cM)\cap(\mu,\infty)$ and let $(x_n,y_n)^T\in\dom(\cM)$
such that \eqref{defpostype1} holds.
It follows from Lemma~\ref{le:approx_es} that $\frs(\lambda)[x_n]\to0$,
$\liminf_{n\to\infty}\|x_n\|>0$ and that $\|B^*x_n\|$ is bounded.
Since $\frs$ satisfies the condition \VM{} by Lemma~\ref{vmlem},
there exist $\eps,\delta>0$ such that \eqref{VMimpl} holds.
Hence there exists an $N\in\NN$ so that
\[
  \frs'(\lambda)[x_n] \le -\delta\|x_n\|^2, \qquad n\ge N.
\]
Lemma~\ref{le:approx_es}\,(ii) implies that $y_n=(D-\lambda)^{-1}B^*x_n+w_n$
with $w_n\to0$.
This, together with \eqref{frs2}, yields
\begin{align*}
  & \biggl[\binom{x_n}{y_n},\binom{x_n}{y_n}\biggr]
  = \|x_n\|^2 - \|y_n\|^2
  \\[0.5ex]
  &= \|x_n\|^2 - \bigl\|(D-\lambda)^{-1}B^*x_n+w_n\bigr\|^2
  \displaybreak[0]\\[0.5ex]
  &= \|x_n\|^2 - \bigl\|(D-\lambda)^{-1}B^*x_n\bigr\|^2
  - 2\Re\bigl\langle(D-\lambda)^{-1}B^*x_n,w_n\bigr\rangle - \|w_n\|^2
  \\[0.5ex]
  &= -\frs'(\lambda)[x_n] + \rmo(1)
  \\[0.5ex]
  &\ge \delta\|x_n\|^2 + \rmo(1).
\end{align*}
Since $\liminf_{n\to\infty}\|x_n\|^2>0$, we obtain
\[
  \liminf_{n\to\infty} \biggl[\binom{x_n}{y_n},\binom{x_n}{y_n}\biggr]
  \ge \delta\,\liminf_{n\to\infty}\|x_n\|^2 > 0,
\]
which shows that $\lambda$ is a spectral point of positive type.
\end{proof}

It follows from this theorem and \cite[\S8]{AJT05}
(cf.\ also \cite[Theorem~3.1]{LMM97} for bounded operators)
that the operator $\cM$ has a local spectral function of positive type on
the interval $(\mu,\infty)$.

In the following proposition we consider again the situation from
Proposition~\ref{pr:strictA}, namely that in \eqref{condA1} or \eqref{condA2}
strict inequality holds.

\begin{proposition}\label{pr:kreinnonneg}
Suppose that Assumption~{\rm\ref{assump}} is satisfied, let $\cM$ be the operator
as in Theorem~{\rm\ref{jsaness}} and assume that the condition
in \eqref{strictcondA} is satisfied.
For $\gamma\in(\mu,\mu_+)$ the operator $\cM-\gamma$ is non-negative
in the Krein space $\cK$ and $\gamma\in\rho(\cM)$.
\end{proposition}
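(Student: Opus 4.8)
The plan is to compute the Krein-space quadratic form $[(\cM-\gamma)w,w]$ for an arbitrary $w=(x,y)^T\in\dom(\cM)$ and to bound it below by the quadratic form of a positive semidefinite $2\times2$ matrix built from $\cM_{x,y}$. First I would note that \eqref{strictcondA} implies that \condA{} holds: in the first case of \eqref{strictcondA} one has $b\delta_++b^2+a\le0$, and then $b>0$ (since $b=0$ would force $a<0$, contradicting \eqref{abalmi}), so \eqref{condA1} holds; in the second case \eqref{condA2} holds. Hence Proposition~\ref{pr:strictA} applies and yields $\delta_+<\mu<\gamma<\mu_+\le\alpha_-$ for every $\gamma\in(\mu,\mu_+)$, while Theorem~\ref{th:specinB} gives $(\mu,\mu_+)\subset\rho(\cM)$, so $\gamma\in\rho(\cM)$ is immediate. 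Recall also from Theorem~\ref{jsaness} that every $(x,y)^T\in\dom(\cM)$ satisfies $x\in\dom(\fra)\subset\dom(B^*)$ and $y\in\dom(D)\subset\dom(\frd)$, so all forms and pairings below are defined.

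For the non-negativity, writing $(u_0,v_0)^T\defeq\cM w$ one has $[(\cM-\gamma)w,w]=\langle u_0,x\rangle-\langle v_0,y\rangle-\gamma(\|x\|^2-\|y\|^2)$, and substituting \eqref{form1} and \eqref{form2} gives
\[
  [(\cM-\gamma)w,w] = \bigl(\fra[x]-\gamma\|x\|^2\bigr) - \bigl(\frd[y]-\gamma\|y\|^2\bigr) + 2\Re\langle B^*x,y\rangle .
\]
If $x=0$ the right-hand side equals $\gamma\|y\|^2-\frd[y]\ge(\gamma-\delta_+)\|y\|^2\ge0$, and if $y=0$ it equals $\fra[x]-\gamma\|x\|^2\ge(\alpha_--\gamma)\|x\|^2\ge0$, using $\gamma>\mu>\delta_+$ and $\gamma<\mu_+\le\alpha_-$ respectively. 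In the remaining case $x\neq0$, $y\neq0$, I set $\alpha,\beta,\delta$ as in \eqref{alphabetadelta}, so that $\langle B^*x,y\rangle=\overline\beta\,\|x\|\,\|y\|$ and hence
\[
  [(\cM-\gamma)w,w] \ \ge\ (\alpha-\gamma)\|x\|^2 + (\gamma-\delta)\|y\|^2 - 2|\beta|\,\|x\|\,\|y\|,
\]
which is the quadratic form of the real symmetric matrix $\bigl(\begin{smallmatrix}\alpha-\gamma&-|\beta|\\ -|\beta|&\gamma-\delta\end{smallmatrix}\bigr)$ evaluated at $(\|x\|,\|y\|)$.

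It remains to check that this matrix is positive semidefinite. Its diagonal entries are positive: $\alpha-\gamma\ge\alpha_--\gamma>0$ since $\gamma<\mu_+\le\alpha_-$, and $\gamma-\delta\ge\gamma-\delta_+>0$ since $\gamma>\mu>\delta_+$. Its determinant is $(\alpha-\gamma)(\gamma-\delta)-|\beta|^2=-\det(\cM_{x,y}-\gamma)$; since $\det(\cM_{x,y}-\lambda)$ is the monic characteristic polynomial of $\cM_{x,y}$, this equals $\bigl(\lambda_+\binom xy-\gamma\bigr)\bigl(\gamma-\lambda_-\binom xy\bigr)$. Because \condA{} holds, Proposition~\ref{numrange} gives $\lambda_\pm\binom xy\in\RR$ together with $\lambda_-\binom xy\le\mu<\gamma<\mu_+\le\lambda_+\binom xy$ (this is \eqref{Lambdapmsep}), so both factors are positive and the determinant is positive. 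Hence the matrix is positive semidefinite, $[(\cM-\gamma)w,w]\ge0$ for all $w\in\dom(\cM)$, i.e.\ $\cM-\gamma$ is non-negative in $\cK$. The only slightly delicate points are the implication \eqref{strictcondA}$\Rightarrow$\condA{} (in particular not losing the clause $b>0$ in \eqref{condA1}) and the bookkeeping of the degenerate cases $x=0$ or $y=0$; the analytic content is entirely supplied by the already established separation $\lambda_-\binom xy\le\mu<\mu_+\le\lambda_+\binom xy$ from Proposition~\ref{numrange}.
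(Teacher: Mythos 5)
Your proof is correct and follows essentially the same route as the paper's: derive the Krein-form identity from \eqref{form1}--\eqref{form2} (the paper uses \eqref{connection_T_form} directly, but these are the same thing), dispose of the degenerate cases $x=0$ or $y=0$ using $\delta_+<\gamma<\alpha_-$, and for $x,y\ne0$ combine the separation $\lambda_-\binom xy\le\mu<\gamma<\mu_+\le\lambda_+\binom xy$ from Proposition~\ref{numrange} with the sign of $\det(\cM_{x,y}-\gamma)$. Your phrasing via positive (semi)definiteness of the real symmetric matrix $\bigl(\begin{smallmatrix}\alpha-\gamma&-|\beta|\\ -|\beta|&\gamma-\delta\end{smallmatrix}\bigr)$ is just a repackaging of the paper's completed-square estimate $(\fra-\gamma)[x]+(\gamma-\frd)[y]-2\sqrt{(\fra-\gamma)[x]}\sqrt{(\gamma-\frd)[y]}\ge0$, and your explicit verification that \eqref{strictcondA}$\Rightarrow$\condA{} (in particular that $b>0$ via \eqref{abalmi}) makes a step the paper leaves tacit.
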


\begin{proof}
Let $(x,y)^T\in\dom(\cM)$.  From \eqref{connection_T_form} we obtain
\begin{align}
  \biggl[(\cM-\gamma)\binom{x}{y},\binom{x}{y}\biggr]
  &= \biggl\langle(\cM-\gamma)\binom{x}{y},\binom{x}{-y}\biggr\rangle
  \notag\\[0.5ex]
  &= \fra[x] - \gamma\|x\|^2 + \langle y,B^*x\rangle + \langle B^*x,y\rangle
  - \langle Dy,y\rangle + \gamma\|y\|^2
  \notag\\[0.5ex]
  &= (\fra-\gamma)[x] - (\frd-\gamma)[y] + 2\Re\langle y,B^*x\rangle
  \notag\\[0.5ex]
  &\ge (\fra-\gamma)[x] - (\frd-\gamma)[y] - 2\bigl|\langle y,B^*x\rangle\bigr|.
  \label{kreinform}
\end{align}
If $x=0$ or $y=0$, then the expression in \eqref{kreinform} is non-negative
since $\delta_+<\gamma<\alpha_-$.
Now assume that $x,y\ne0$.
It follows from Proposition~\ref{numrange} that $\lambda_\pm\binom{x}{y}\in\RR$ and
\[
  \lambda_-\binom{x}{y} \le \mu < \gamma < \mu_+ \le \lambda_+\binom{x}{y}.
\]
Therefore
\begin{align*}
  & \frac{(\fra-\gamma)[x]\cdot(\frd-\gamma)[y]+|\langle y,B^*x\rangle|^2}{\|x\|^2\|y\|^2}
  = \det(\cM_{x,y}-\gamma)
  \\[0.5ex]
  &= \biggl(\gamma-\lambda_+\binom{x}{y}\biggr)\biggl(\gamma-\lambda_-\binom{x}{y}\biggr)
  < 0.
\end{align*}
If we combine this with the inequality in \eqref{kreinform}, we obtain
\[
  \biggl[(\cM-\gamma)\binom{x}{y},\binom{x}{y}\biggr]
  \ge (\fra-\gamma)[x] + (\gamma-\frd)[y] - 2\sqrt{(\fra-\gamma)[x]}\cdot\sqrt{(\gamma-\frd)[y]}
  \ge 0.
\]
Relation \eqref{sunifpos} implies also that $0\in\rho(S(\lambda))$,
which, by Theorem~\ref{specequiv}, yields that $\gamma\in\rho(\cM)$.
\end{proof}

If \eqref{strictcondA} is satisfied and $D$ is bounded, then,
by Proposition~\ref{pr:kreinnonneg}, all assumptions of \cite[Theorem~3.2]{MS96}
are satisfied with $\gamma\in(\mu,\mu_+)$.
The latter theorem implies, e.g.\ that the spectral subspaces corresponding
to $(\gamma,\infty)$ and $(-\infty,\gamma)$ are maximal uniformly positive
and negative, respectively.

A self-adjoint $T$ operator in a Krein space is called \emph{definitisable}
if $\rho(T)\ne\varnothing$ and there exists a real polynomial $p$ such that
\[
  [p(T)x,x] \ge 0 \qquad\text{for all}\;\; x\in\dom(T^{\deg p});
\]
see, e.g.\ \cite[p.~10]{L82}.
In the next theorem we consider again the situation when $A$ has compact resolvent.

\begin{theorem}\label{th:definitisable}
Suppose that Assumption~{\rm\ref{assump}} is satisfied and that $A$ has compact resolvent.
Then $\cM$ is definitisable, and hence the non-real spectrum of $\cM$ is finite.
\end{theorem}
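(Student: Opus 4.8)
The plan is to use the compactness of the resolvent of $A$ to split the Krein space $\cK$ into two $\cM$-invariant, mutually $[\cdot\,,\cdot]$-orthogonal parts: a uniformly positive (hence Hilbert) subspace $\cL_+$ carrying the spectrum in $(\mu,\infty)$, and a complementary subspace $\cL_-:=\cL_+^{[\perp]}$ that turns out to be a \emph{Pontryagin} space. On $\cL_-$ the restriction of $\cM$ is then automatically definitisable by the classical theory of self-adjoint operators in Pontryagin spaces, on $\cL_+$ it is an ordinary self-adjoint operator that is bounded from below, and the spectral functions of the two restrictions combine to a spectral function of $\cM$ on $\ov\RR$ with only finitely many critical points, none of them singular; by the characterisation of definitisable operators in \cite{L82} this yields definitisability, and finiteness of the non-real spectrum then follows because the non-real spectral points are zeros of any definitizing polynomial.

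To carry this out I would first record that, by Corollary~\ref{compres}, $\sigmaess(\cM)\cap(\mu,\infty)=\varnothing$, and, by Theorem~\ref{th:postype}, every point of $\sigma(\cM)\cap(\mu,\infty)$ is of positive type; hence $\sigma(\cM)\cap(\mu,\infty)$ consists of isolated eigenvalues of finite multiplicity with positive definite eigenspaces, accumulating at most at $+\infty$. A short computation with the approximate-eigenvector relations (as in the proof of Theorem~\ref{th:postype}, using $y_n=(D-\lambda)^{-1}B^*x_n+\rmo(1)$ together with $\|(D-\lambda)^{-1}B^*x_n\|/\|x_n\|\to0$ as $\lambda\to\infty$) shows that these eigenvalues are in fact \emph{uniformly} of positive type, so the closed linear span $\cL_+$ of the corresponding eigenvectors is uniformly positive, $\cM$-invariant, and satisfies $\sigma(\cM\restr{\cL_+})=\sigma(\cM)\cap(\mu,\infty)$. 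Then $\cL_-=\cL_+^{[\perp]}$ is an $\cM$-invariant Krein space with $\sigma(\cM\restr{\cL_-})=\sigma(\cM)\setminus(\mu,\infty)$; in particular $\cL_-$ carries all of the non-real spectrum, which by Theorem~\ref{th:specinB} lies in the compact set $\BBnr$, and $\sigma(\cM\restr{\cL_+})$ and $\sigma(\cM\restr{\cL_-})$ are separated by any real $\gamma_0\in(\mu,\infty)\cap\rho(\cM)$.

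The main obstacle is to prove that $\cL_-$ is a Pontryagin space, equivalently that $\cL_+$ has finite co-dimension inside a maximal uniformly positive subspace of $\cK$. This is exactly where compactness of the resolvent of $A$ is used: the spectral subspace of $A$ corresponding to $(\gamma_0,\infty)$ has finite codimension in $\cH_1$ for every $\gamma_0\in(\mu,\infty)$, while $\cH_1\oplus\{0\}$ is a maximal uniformly positive subspace of $\cK$; on the other hand the eigenvalue estimates of Corollaries~\ref{ests} and \ref{compres2} give $\lambda_n\le\nu_{\kappa+n}$ and $\lambda_n=\nu_{\kappa+n}+\cO(1)$, so a counting argument shows that $\cL_+$ fails to be maximal uniformly positive only by a finite dimension (controlled by $\kappa$ and the number of eigenvalues of $A$ not exceeding $\gamma_0$). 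Hence the positive index of $\cL_-$ is finite. Once this is established, $\cM\restr{\cL_-}$ is self-adjoint in a Pontryagin space with non-empty resolvent set (Theorem~\ref{th:specinB}), hence definitisable with a spectral function having only finitely many regular critical points and with $\pm\infty$ not singular, $\cM\restr{\cL_+}$ has the usual spectral function with no critical points, and the argument is completed as indicated above. Finally, the cases in which $\cH_1$ or $\cH_2$ is finite-dimensional are immediate: in the first, $\ind_+\cK=\dim\cH_1<\infty$ automatically makes $\cL_-$ a Pontryagin space, and in the second $\cK$ is itself a Pontryagin space.
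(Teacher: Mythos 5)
Your proposal takes a genuinely different route from the paper: the paper does not construct a spectral decomposition $\cK=\cL_+\,[\dotplus]\,\cL_-$ at all. Instead it truncates $A$ to its spectral subspace $\cL$ for $[\alpha_0,\infty)$ (finite codimension because $A$ has compact resolvent), observes that the truncated matrix $\hat\cM$ on $\hat\cK=\cL\oplus\cH_2$ satisfies the strict version of \condA{} by Proposition~\ref{pr:kreinnonneg} and is therefore non-negative after a shift, concludes that $[(\cM-\gamma)\,\cdot\,,\cdot\,]$ is non-negative on a subspace of $\dom(\cM)$ of finite codimension, and then invokes the criterion in \cite[pp.~11--12]{L82} (finitely many negative squares plus non-empty resolvent set implies definitisable). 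That is a much shorter path than yours and avoids any assertion about spectral subspaces.

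There is a genuine gap in your argument at the decisive step: the claim that $\cL_-=\cL_+^{[\perp]}$ is a Pontryagin space. What you need is that the positive index $\kappa_+(\cL_-)$ is finite, which, once $\cL_+$ is uniformly positive and closed, is equivalent to the codimension of $P_1\cL_+$ in $\cH_1$ being finite (write $\cL_+$ via its angular operator $K$ on $\cD:=P_1\cL_+$; then $\cL_-=\{(K^*v+w,v):v\in\cH_2,\,w\in\cD^\perp\}$ and $\kappa_+(\cL_-)=\dim\cD^\perp$). The eigenvalue comparison $\lambda_n\le\nu_{\kappa+n}$ and $\lambda_n=\nu_{\kappa+n}+\cO(1)$ compares two sequences of numbers indexed by $\NN$ with a shift by $\kappa$; it does not, by itself, control the codimension of the subspace $P_1\cL_+$ inside $\cH_1$, and a cardinality count is vacuous here ($\aleph_0+\aleph_0=\aleph_0$, so an infinite positive index for $\cL_-$ is not excluded by counting). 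To close the gap you would have to show directly that $P_1\cL_+$ has finite codimension -- for instance by relating it to the positive spectral subspace of $S(\gamma_0)$ or by comparing $\cL_+$ with the spectral subspace of $A$ for $(\gamma_0,\infty)$ via perturbation theory -- and at that point you are essentially re-deriving the finite-negative-squares statement that the paper obtains in one step from Proposition~\ref{pr:kreinnonneg} applied to the truncation. The rest of your outline (uniform positivity of $\cL_+$ via the $\VM$-bound together with $\|(D-\lambda)^{-1}B^*x\|/\|x\|\to0$, definitisability on $\cL_-$ from Pontryagin theory, gluing the spectral functions, and the finite-dimensional degenerate cases) is plausible but considerably heavier than the paper's two-step argument.
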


\begin{proof}
That $\rho(\cM)\ne\varnothing$ follows from Theorem~\ref{th:specinB}.
Choose $\alpha_0$ so large that the second inequality in \eqref{strictcondA}
is satisfied with $\alpha_-$ replaced by $\alpha_0$.
Let $\cL$ be the spectral subspace for $A$ corresponding to $[\alpha_0,\infty)$,
let $\hat A$ be the restriction of $A$ to $\cL\cap\dom(A)$.
Moreover, let $P$ be the orthogonal projection in $\cH_1$ onto $\cL$, set
$\hat B\defeq PB$ and
\[
  \hat\cM_0 \defeq \begin{pmatrix} \hat A & \hat B \\ -\hat B^* & D \end{pmatrix},
\]
and let $\hat\cM$ be the closure of $\hat\cM_0$;
the operator $\hat\cM_0$ is understood as an operator in $\hat\cK\defeq\cL\oplus\cH_2$.
It is not difficult to see that Assumption~\ref{assump} is satisfied for $\hat\cM_0$
and that \eqref{domcons} holds with the same $a$ and $b$.
Clearly, $\hat\alpha_-\defeq\min\sigma(\hat A)\ge\alpha_0$ and therefore
the second inequality in \eqref{strictcondA} is satisfied with $\alpha_-$
replaced by $\hat\alpha_-$.
Proposition~\ref{pr:kreinnonneg} applied to $\hat\cM$ yields that $\hat\cM-\gamma$
is non-negative in $\hat\cK$ for some $\gamma\in\RR$.
Since $\hat\cK$ is finite co-dimensional in $\cK$,
this shows that $\cM-\gamma$ has finitely many squares, i.e.\ the form
$[(\cM-\gamma)\,\cdot\,,\cdot]$ is non-negative on a subspace with finite codimension.
By \cite[pp.~11--12]{L82} this implies that $\cM$ is definitisable.
It follows from \cite[Proposition~II.2.1 (p.~28)]{L82} that hence the non-real
spectrum of $\cM$ is finite.
\end{proof}

\section{Examples}\label{exs}

\noindent
In this section we consider two examples where the entries of the
block operator matrix $\cM_0$ are differential or multiplication operators.
The first example was studied in \cite{LLT02} for bounded $w$ and in \cite{JT02,LMM90}
in the one-dimensional case.

\begin{example}
Let $n\in\NN$ and let $\Omega\subset\RR^n$ be an arbitrary bounded domain
(we do not assume any smoothness of the boundary of $\Omega$).
Moreover, let $u\in L^\infty(\Omega)$ and $w\in L^p(\Omega)$ where
\begin{alignat*}{2}
  & p=1 \quad && \text{if} \; n=1, \\[0.5ex]
  & p>1 \quad && \text{if} \; n=2, \\[0.5ex]
  & p=\frac{n}{2} && \text{if} \; n\ge3,
\end{alignat*}
and assume that $u$ is real-valued and $w\ge0$.
Let $\cH_1=\cH_2=L^2(\Omega)$ and consider the operators $\cM_0$ and $\cM$
where $A=-\Delta$ with Dirichlet boundary conditions,
i.e.\ $A$ is the operator corresponding to the form
\[
  \fra[y] = \int_\Omega |\nabla y|^2, \qquad y\in\dom(\fra) = H^1_0(\Omega),
\]
and where $B$ and $D$ are the multiplication operators with the functions $\sqrt{w}$
and $u$, respectively.

By the Sobolev embedding theorem (see, e.g.\ \cite[Theorem~4.12]{AdamsFournier})
one has the continuous embedding $H^1_0(\Omega)\subset L^q(\Omega)$ where
$q=\infty$ if $n=1$; $q<\infty$ arbitrary if $n=2$; and $q=2n/(n-2)$ if $n\ge3$.
Since $\frac{1}{p}+\frac{2}{q}=1$ (where in the case $n=2$ one chooses $q$
accordingly for a given $p$), H\"older's inequality yields
\[
  \|B^*y\|^2 = \int_\Omega w|y|^2 \le \|w\|_{L^p(\Omega)}\|y\|_{L^q(\Omega)}^2,
  \qquad y\in L^q(\Omega).
\]
Therefore
\[
  \dom(\fra) = H^1_0(\Omega) \subset L^q(\Omega) \subset \dom(B^*),
\]
which shows that Assumption~\ref{assump}.(I) is satisfied;
note that $D$ is bounded.

With $C_{\rm Sob}$ and $C_{\rm Poinc}$ denoting the constants in
the Sobolev and the Poincar\'e inequalities, respectively,
we obtain
\begin{align*}
  \|B^*y\|^2 &\le \|w\|_{L^p(\Omega)}\|y\|_{L^q(\Omega)}^2
  \le C_{\rm Sob}^2\|w\|_{L^p(\Omega)}\|y\|_{H_0^1(\Omega)}^2
  \\[0.5ex]
  &\le C_{\rm Sob}^2C_{\rm Poinc}^2\|w\|_{L^p(\Omega)}\fra[y]
\end{align*}
for $y\in H_0^1(\Omega)$, which yields a possible choice for $b$ where $a=0$.

The Schur complement corresponds to the form
\[
  \frs(z)[y] = \int_\Omega \biggl(|\nabla y|^2 + \Bigl(-z+\frac{w}{u-z}\Bigr)|y|^2\biggr),
  \qquad y\in\dom(\frs(z))=H^1_0(\Omega),
\]
for $\dist(z,\essran u)>b_0$.  As an operator it acts like
\[
  S(z)y = -\Delta y+\Bigl(-z+\frac{w}{u-z}\Bigr)y.
\]

The operator $A$ has compact resolvent; let $\nu_1\le\nu_2\le\cdots$
be its eigenvalues in non-decreasing order and set
$\delta_-\defeq\essinf u$, $\delta_+\defeq\esssup u$.
Proposition~\ref{pr:sessALMS} implies that $\sigmaess(\cM)\subset[\delta_-,\delta_++b_0]$,
and Theorem~\ref{th:specinB} gives an enclosure for $\sigma(\cM)$.
Moreover, Theorem~\ref{th:definitisable} shows that the non-real spectrum is finite,
and Corollaries~\ref{co:comp_res1}, \ref{ests} and \ref{compres2} yield
that $\sigma(\cM)\cap(\mu,\infty)$ consists of a sequence of eigenvalues
that tends to $\infty$ and satisfies \eqref{est1} and \eqref{asympest1}.

If $p>n/2$ when $n\ge3$ and $p$ as above when $n=1,2$,
then the embedding $H^1_0(\Omega)\to L^q(\Omega)$ is even compact
and hence $B^*(A-\nu)^{-\frac12}$ is a compact operator
for $\nu<\min\sigma(A)$.
By Remarks~\ref{re:comp1} and \ref{re:comp2} one has $b_0=0$ and
$\sigmaess(\cM)=\sigmaess(D)=\essran u$.
\end{example}

\begin{example}
Let $\cH_1=\cH_2=L^2(0,1)$, let $q,u,v\in L^\infty(0,1)$, where $q$ and $u$
are real-valued, and consider the operators
\begin{alignat*}{2}
  Ay &= -y''+qy, \qquad & \dom(A) &= H^2(0,1)\cap H^1_0(0,1),
  \\[0.5ex]
  By &= (vy)', & \dom(B) &= H^1(0,1),
  \\[0.5ex]
  B^*y &= \ov{v}y', & \dom(B^*) &\supset H^1_0(0,1),
  \\[0.5ex]
  Dy &= uy, & \dom(D) &= L^2(0,1).
\end{alignat*}
Assumption~\ref{assump}.(I) is satisfied, and for $y\in\dom(\fra)=H^1_0(0,1)$
we have
\begin{align*}
  \|B^*y\|^2 &= \int_0^1 \bigl|\ov{v}y'\bigr|^2
  \le \sup|v|^2 \int_0^1 |y'|^2
  \\[0.5ex]
  &= \sup|v|^2\int_0^1\bigl(|y'|^2+q|y|^2\bigr) - \sup|v|^2\int_0^1 q|y|^2
  \\[0.5ex]
  &\le \sup|v|^2\int_0^1\bigl(|y'|^2+q|y|^2\bigr) - \sup|v|^2\cdot\inf q \int_0^1|y|^2
  \\[0.5ex]
  &= \sup|v|^2\fra[y] - \sup|v|^2\cdot\inf q\cdot \|y\|^2.
\end{align*}
Hence a possible choice for $a$ and $b$ is
\[
  a = -\sup|v|^2\cdot\inf q, \qquad b = \sup|v|^2.
\]
Clearly, $D$ is bounded and $\delta_-=\inf u$, $\delta_+=\sup u$.
Condition \eqref{condA1} is satisfied if and only if
\begin{equation}\label{ex2_condA1}
  v \not\equiv 0 \qquad\text{and}\qquad \sup u + \sup|v|^2 \le \inf q.
\end{equation}
If \eqref{ex2_condA1} holds, then $\delta_-\le\delta_+\le\inf q\le\alpha_-$ and hence
\[
  \mu_- = \inf u, \qquad
  \mu = \sup u+\sup|v|^2, \qquad
  \mu_+ \ge -\frac{a}{b} = \inf q.
\]
This, together with Theorem~\ref{th:specinB}, implies that
\[
  \sigma(\cM) \subset [\inf u,\sup u+\sup|v|^2] \cup [\inf q,\infty).
\]
It follows from \cite[Theorem~4.5]{ALMS94} (cf.\ Proposition~\ref{pr:sessALMS})
that
\[
  \sigmaess(\cM)=\essran(u+|v|^2).
\]
It is easy to see that the Schur complement is given by
\[
  S(z)y = -\biggl(\Bigl(1+\frac{|v|^2}{u-z}\Bigr)y'\biggr)'+qy-zy
\]
for $z$ with $\dist(z,\essran u)>b_0$.
Note that $\sigmaess(\cM)$ is the set of $z\in\CC$ for which
\[
  0\in\essran\Bigl(1+\frac{|v|^2}{u-z}\Bigr).
\]
In $(\mu,\infty)$ the spectrum of $\cM$ consists of a sequence of eigenvalues that
tends to $\infty$ and satisfies \eqref{est1} and \eqref{asympest1}, e.g.\
\[
  \lambda_n \ge \nu_{\kappa+n}-\sup|v|^2
  +\frac{\sup|v|^2\bigl(\inf q-\sup u-\sup|v|^2\bigr)}{\nu_{\kappa+n}-\sup u}
  +\cO\biggl(\frac{1}{\nu_{\kappa+n}^2}\biggr), \qquad n\to\infty,
\]
where $\nu_k$ are the eigenvalues of $A$.
If $\inf|v|^2>0$, then \eqref{Blowest} holds with
\[
  \hat a = -\inf|v|^2\cdot\sup q, \qquad \hat b = \inf|v|^2,
\]
and \eqref{est2} and \eqref{asympest2} are valid.
\end{example}

\medskip

\subsection*{Acknowledgements}
\rule{0ex}{1ex}\\
Both authors gratefully acknowledge the support of the
Engineering and Physical Sciences Research Council (EPSRC), grant no.\
EP/E037844/1.
M.~Strauss gratefully acknowledges the support from the Wales Institute of
Mathematical and Computational Sciences and the Leverhulme Trust grant: RPG-167.


\end{document}